\newtheorem{thm}{Theorem}[section]   
\newtheorem{lem}[thm]{Lemma}          
\newtheorem{prop}[thm]{Proposition}
\newtheorem{axiom}{Axiom}
\newtheorem{defn}[thm]{Definition}
\newtheorem{cor}[thm]{Corollary}
\theoremstyle{definition}
\newtheorem{rem}[thm]{Remark}
\newcommand*{\longhookrightarrow}{\ensuremath{\lhook\joinrel\relbar\joinrel\rightarrow}}
\def\abb{{\mathbb{A}}}
\def\rbb{{\mathbb{R}}}
\def\ccal{{\mathcal{C}}}
\def\dcal{{\mathcal{D}}}
\def\ical{{\mathcal{I}}}
\def\jcal{{\mathcal{J}}}
\def\kcal{{\mathcal{K}}}
\def\qcal{{\mathcal{Q}}}
\def\rcal{{\mathcal{R}}}
\def\scal{{\mathcal{S}}}
\def\tcal{{\mathcal{T}}}
\def\czero{{\mathcal{C}^0}}
\def\coloneqq{\mathrel{\mathop:}=}
\numberwithin{equation}{section}
\begin{document}
\title{Model category of diffeological spaces}
\author{Hiroshi Kihara}
\institute{Hiroshi Kihara \at
	Center for Mathematical Sciences, University of Aizu, 
	Tsuruga, Ikki-machi, Aizu-Wakamatsu City, Fukushima, 965-8580, Japan \\
	Tel.: (+81)-242-37-2645\\
	Fax: (+81)-242-37-2752\\
	\email{kihara@u-aizu.ac.jp}           
}
\date{Received: date / Accepted: date}
\maketitle
\begin{abstract}
	The existence of a model structure on the category $\dcal$ of diffeological spaces is crucial to developing smooth homotopy theory. We construct a compactly generated model structure on the category $\dcal$ whose weak equivalences are just smooth maps inducing isomorphisms on smooth homotopy groups. The essential part of our construction of the model structure on $\dcal$ is to introduce diffeologies on the sets $\Delta^{p}$ $(p \geq 0)$ such that $\Delta^{p}$ contains the $k^{th}$ horn $\Lambda^{p}_{k}$ as a smooth deformation retract.
	\keywords{Diffeological space \and Model category \and Standard simplex \and Smooth homotopy group}
	
	\subclass{Primary 58A40; Secondary 57P99 \and 18G55}
	
\end{abstract}
\section{Introduction}
The theory of model categories has developed explosively since its introduction by Quillen \cite{Q1} and has become the modern foundation of homotopy theory. Model categories have been constructed in many fields, such as algebraic geometry \cite{MV}, complex geometry \cite{L}, and operator algebra \cite{O}, to solve various problems using homotopical means.
\par\indent
In differential geometry, researchers are beginning to recognize the importance of finding a model category in which to do smooth homotopy theory (\cite{Wu}, \cite{CW}, \cite{HS}); in such a category, we can fully utilize a number of notions and consequences from the model category theory (cf. \cite{Hi}, \cite{Ho}). Since a model category must be complete and cocomplete by definition, we must construct a model structure on the category of some type of smooth spaces (i.e., generalized smooth manifolds).
\par\indent
The main objective of this paper is to endow the category $\dcal$ of diffeological spaces with a compactly generated model structure whose weak equivalences are just smooth maps inducing isomorphisms on smooth homotopy groups (see \cite[Definition 15.2.1]{MP} for a compactly generated model structure).
\par\indent
Since every object is fibrant with respect to our model structure (Theorem \ref{model}), cofibrant diffeological spaces play an important role in smooth homotopy theory as $CW$-complexes do in topological homotopy theory (cf. \cite[Section 1]{Quillenequiv}).
\par\indent
Construction of a model structure on the category $\dcal$ has been attempted by several authors. Since the standard $p$-simplex $\Delta^{p}$ endowed with the sub-diffeology of $\rbb^{p+1}$ does not contain the $k^{th}$ horn $\Lambda^{p}_{k}$ as a deformation retract, ingenuity is needed to construct a model structure on $\dcal$ in a manner similar to the case of topological spaces (\cite[Definition 7.10.6 and Example 11.1.8]{Hi}).
Wu \cite{Wu} and Christensen-Wu \cite{CW} defined fibrations, cofibrations, and weak equivalences, using affine $p$-spaces instead of the standard $p$-simplices (\cite[Definition 4.8]{CW}), and conjectured that with their definitions, $\dcal$ is a model category. In the latest version (version 6) of \cite{HS}, Haraguchi-Shimakawa 
used the notion of a tame map (\cite[Definition 3.8]{HS}) to define fibrations, and claimed that $\dcal$ is a model category which is not cofibrantly generated (\cite[Theorem 5.1]{HS}). 
However, there exists a gap in the proof of \cite[Lemma 5.6]{HS}.
\if0
More precisely, the proof of the existence of a lift $\coprod_{(f, g)}I^{n} \longrightarrow E$ making \cite[diagram (5.4)]{HS} commutative is incomplete. Taking \cite[Definition 4.2]{HS} into account, the author considers that they used the assertion that each component of the composite
\[
\scalebox{0.75}{$\displaystyle \coprod$}_{(f, g)}I^{n} \xrightarrow{\bigcup\Phi_{f}} G^{1}(\kcal, p) \xrightarrow{\ \ \ \ \ \ \ } G^{\infty}(\kcal, p) \xrightarrow{ \ \ \ \ r \ \ \ } B
\]
is tame for any smooth map $r: G^{\infty}(\kcal, p) \longrightarrow B$. But, this assertion is incorrect. In fact, any component $I^{n} \longrightarrow B$ is not tame in the case where $r$ is injective and $n > 0$.
\fi
\par\indent
We construct a compactly generated model structure on $\dcal$ by a new approach, as outlined in the following subsections.
\par\indent
In \cite{Quillenequiv}, we establish the Quillen equivalences between the model categories of diffeological spaces, simplicial sets, and arc-generated spaces, using adjoint pairs introduced in this paper. The results in this paper and \cite{Quillenequiv} are applied to the theory of finite- and infinite-dimensional $C^{\infty}$-manifolds in the succeeding papers.
\subsection{Category $\dcal$ of diffeological spaces.}
In this subsection, we recall the definition of a diffeological space and summarize the basic properties of the category $\dcal$ that are needed to state the main results.
\par\indent
A {\sl parametrization} of a set $X$ is a (set-theoretic) map $p: U \longrightarrow X$, where $U$ is an open subset of $\rbb^{n}$ for some $n$.
\begin{defn}\label{diffeosp}$(1)$ A {\sl diffeological space} is a set $X$ together with a specified set $D_X$ of parametrizations of $X$ satisfying the following conditions:
	\begin{itemize}
		\item[$\mathrm{(i)}$](Covering)  Every constant parametrization $p:U\longrightarrow X$ is in $D_X$.
		\item[$(\mathrm{ii})$](Locality) Let $p :U\longrightarrow X$ be a parametrization such that there exists an open cover $\{U_i\}$ of $U$ satisfying $p|_{U_i}\in D_X$. Then, $p$ is in $D_X$.
		\item[$(\mathrm{iii})$](Smooth compatibility) Let $p:U\longrightarrow X$ be in $D_X$. Then, for every $n \geq 0$, every open set $V$ of $\rbb^{n}$ and every smooth map $F  :V\longrightarrow U$, $p\circ F$ is in $D_X$.
	\end{itemize}
	The set $D_X$ is called the {\sl diffeology} of $X$, and its elements are called {\sl plots}.\\
	$(2)$ Let $X=(X,D_X)$ and $Y=(Y,D_Y)$ be diffeological spaces, and let $f  :X\longrightarrow Y$ be a (set-theoretic) map. We say that $f$ is {\sl smooth} if for any $p\in D_X$, \ $f\circ p\in D_Y$. Then, diffeological spaces and smooth maps form the category $\dcal$.
\end{defn}
The category $\dcal$ has the following properties (see Propositions \ref{category D} and \ref{adjointDC0}):
\begin{itemize}
	\item[(1)] The category $\mathcal{D}$ has initial and final structures with respect to the underlying set functor. In particular, $\mathcal{D}$ is complete and cocomplete. Further, the class of $\dcal$-embeddings (i.e., injective initial morphisms) is closed under pushouts and transfinite composites.
	\item[(2)] The category $\mathcal{D}$ is cartesian closed.
	\item[(3)] The underlying set functor $\dcal \longrightarrow Set$
	is factored as the underlying topological space functor 
	$\widetilde{\cdot}:\dcal \longrightarrow \czero$
	followed by the underlying set functor
	$\czero \longrightarrow Set$, where $\czero$ denotes the category of arc-generated spaces.
	Further, the functor
	$\widetilde{\cdot}:\dcal \longrightarrow \czero$
	has a right adjoint
	$R:\czero \longrightarrow \dcal$.
\end{itemize}
The notions of diffeological subspace and sub-diffeology are defined in the standard manner (see Definition \ref{embedding}).
\subsection{Standard $p$-simplices}
The principal part of our construction of a model structure on $\dcal$ is the construction of good diffeologies on the sets
$$
\Delta^p=\{(x_0,\ldots,x_p)\in\mathbb{R}^{p+1} \ |\ \underset{i}{\sum} x_i = 1,\ x_i\geq 0 \}\ \ \ (p\geq 0)
$$
which enable us to define weak equivalences, fibrations, and cofibrations and to verify the model axioms (cf. Section 1.3). The required properties of the diffeologies on $\Delta^{p} \ (p \geq 0)$ are expressed in the following four axioms:
\if0
The main part of our construction of a model structure on $\dcal$ is to construct diffeologies on the sets
$$
\Delta^p=\{(x_0,\ldots,x_p)\in\mathbb{R}^{p+1}|\ \underset{i}{\sum} x_i = 1,\ x_i\geq 0 \}\ \ \ (p\geq 0)
$$
satisfying the following four axioms, which enable us to define weak equivalences, fibrations, and cofibrations and to verify the model axioms (cf. Section 1.3):
\fi
\begin{axiom}
	The underlying topological space of $\Delta^p$ is the topological standard $p$-simplex for $p\geq 0$.
\end{axiom}
Recall that
$f:\Delta^p \longrightarrow \Delta^q$
is an {\sl affine map} if $f$ preserves convex combinations.
\begin{axiom}
	Any affine map $f:\Delta^p\longrightarrow \Delta^q$ is smooth.
\end{axiom}
Let $\scal$ denote the category of simplicial sets. For $K \in \scal$, the {\sl simplex category} $\Delta\downarrow K$ is defined to be the full subcategory of the overcategory $\scal\downarrow K$ consisting of maps $\sigma: \Delta[n] \longrightarrow K$ (\cite[p. 7]{GJ}). By Axiom 2, we can consider the diagram $\Delta\downarrow K \longrightarrow \mathcal{D}$ sending $\sigma :\Delta[n] \longrightarrow K$ to $\Delta^{n}$. Thus, we define the {\sl realization functor}
$$
|\ |_{\dcal}: \mathcal{S}\longrightarrow \mathcal{D}
$$
by $|K|_{\mathcal{D}}= \underset{\mathrm{\Delta\downarrow \text{\em K}}}{\mathrm{colim}} \ \Delta^n$.
\par\indent
Consider the smooth map $|\dot{\Delta}[p]|_{\dcal} \longhookrightarrow |\Delta[p]|_{\dcal} = \Delta^{p}$ induced by the inclusion of the boundary $\dot{\Delta}[p]$ into $\Delta[p]$.
\begin{axiom}
	The canonical smooth injection
	$$\left| \dot{\Delta}[p] \right|_{\dcal} \longhookrightarrow \Delta^p$$
	is a $\dcal$-embedding.
\end{axiom}

The notion of a deformation retract in $\dcal$ is defined in the same manner as in the category of topological spaces by using the unit interval $I=[0,\ 1]$ endowed with a diffeology via the canonical bijection with $\Delta^{1}$ (Section 2.4).

The {\sl $k^{th}$ horn} of $\Delta^p$ is a diffeological subspace of $\Delta^p$ defined by
\begin{eqnarray*}
	\Lambda^p_k &=&  \{(x_0,\ldots,x_p)\in\Delta^p \ |\ x_i=0 \hbox{ for some }i\neq k\}.
\end{eqnarray*}
\begin{axiom}
	The $k^{th}$ horn $\Lambda^p_k$ is a deformation retract of $\Delta^p$ in $\mathcal{D}$ for $p \geq 1$ and $0 \leq k \leq p$.
\end{axiom}
%
For a subset $A$ of the affine $p$-space $\abb^{p} = \{(x_0, \ldots, x_p) \in \rbb^{p+1} \ | \ \sum x_i = 1 \}$, $A_{\mathrm{sub}}$ denotes the set $A$ endowed with the sub-diffeology of $\abb^{p} \ (\cong \rbb^{p})$.
The set $\{\Delta^{p}_{\mathrm{sub}} \}_{p \geq 0}$ of diffeological spaces, used in \cite{H} to study diffeological spaces by homotopical means, is the first candidate of the standard $p$-simplices satisfying Axioms 1-4.
However, $\Delta^{p}_{\mathrm{sub}}$ satisfies neither Axiom 3 nor 4 for $p \geq 2$ (Proposition \ref{counterex}). Thus, we must construct a new diffeology on $\Delta^p$, at least for $p \geq 2$. Let us introduce such diffeologies on $\Delta^p$.
\par\indent
Let $(i)$ denote the vertex $(0, \ldots, \underset{(i)}{1}, \ldots, 0)$ of $\Delta^p$, and let $d^i$ denote the affine map from $\Delta^{p-1}$ to $\Delta^p$, defined by
\begin{eqnarray*}
	d^i((k))= \left \{
	\begin{array}{ll}
		(k) & \text{for} \ k<i,\\
		(k+1)& \text{for} \ k\geq i.
	\end{array}
	\right.
\end{eqnarray*} 
\begin{defn}\label{simplices}
	We define the {\sl standard $p$-simplices} $\Delta^p$ ($p\geq 0$) inductively. Set $\Delta^p=\Delta_{\mathrm{sub}}^p$ for $p\leq 1$. Suppose that the diffeologies on $\Delta^k$ ($k<p$) are defined. 
	We define the map 
	\begin{eqnarray*}
		\varphi_i: \Delta^{p-1}\times [0,1) & \longrightarrow & \Delta^p
	\end{eqnarray*}
	by $\varphi_{i}(x, t) = (1-t)(i)+td^{i}(x)$, and endow $\Delta^p$ with the final structure for the maps $\varphi_{0}, \ldots, \varphi_{p}$.
\end{defn}
Without explicit mention, the symbol $\Delta^p$ denotes the standard $p$-simplex defined in Definition \ref{simplices}; see Lemmas \ref{firstproperty} and \ref{secondproperty}, and Remarks \ref{skeleton} and \ref{fibrantapprox} for a comparison of the diffeologies of $\Delta^{p}$ and $\Delta^{p}_{\mathrm{sub}}$.
\par\indent
Since the diffeology of $\Delta^p$ is the sub-diffeology of $\abb^{p}$ for $p \leq 1$, our notion of a deformation retract in $\dcal$ coincides with the ordinary notion of a deformation retract in the theory of diffeological spaces (\cite[p. 110]{I} and Remark \ref{Dhomotopy}).
\par\indent
Axioms 1-4 for the standard $p$-simplices $\Delta^{p}$ are verified in Sections 3-8.
\subsection{Model structure on the category $\dcal$}
By Axiom 2, we can define the singular complex $S^{\dcal}X$ of a diffeological space $X$ to have smooth maps $\sigma : \Delta^p \longrightarrow X$ as $p$-simplices, thereby defining the {\sl singular functor} $S^{\mathcal{D}} :\mathcal{D} \longrightarrow \mathcal{S}$. We introduce a model structure on the category $\mathcal{D}$ of diffeological spaces in the following theorem; the proof is constructed using only properties (1)-(3) of the category $\dcal$ and Axioms 1-4 for the standard simplices. See \cite[Definition 15.2.1]{MP} for a compactly generated model category and \cite{K}, \cite{GJ}, or \cite{MP} for the model structure of $\mathcal{S}$.

\begin{thm}\label{model}
	Define a map $f :X\longrightarrow Y$ in $\mathcal{D}$ to be
	\begin{itemize}
		\item[$(1)$]
		a weak equivalence if $S^{\mathcal{D}} f:S^{\mathcal{D}} X\longrightarrow S^{\mathcal{D}} Y$ is a weak equivalence in the category of simplicial sets,
		\item[$(2)$]
		a fibration if the map $f$ has the right lifting property with respect to the inclusions $\Lambda^p_k \longhookrightarrow\Delta^p$ for all $p>0$ and $0\leq k\leq p$, and
		\item[$(3)$]
		a cofibration if the map $f$ has the left lifting property with respect to all maps that are both fibrations and weak equivalences.
	\end{itemize}
	With these choices, $\mathcal{D}$ is a compactly generated model category whose object is always fibrant.
\end{thm}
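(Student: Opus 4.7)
The plan is to apply Kan's recognition theorem for cofibrantly generated model categories (cf.\ \cite{Ho}, \cite{MP}) to the generating sets
\[
I=\{\,|\partial\Delta[n]|_{\dcal}\longhookrightarrow \Delta^n\,\}_{n\geq 0}, \qquad
J=\{\,\Lambda^n_k\longhookrightarrow \Delta^n\,\}_{n\geq 1,\,0\leq k\leq n}.
\]
By Axiom 3 every map in $I\cup J$ is a $\dcal$-embedding, and by property (1) this class is stable under the pushouts and transfinite composites that arise in the small object argument. The standard colimit formula together with Axiom 2 produces the adjunction $|\ |_{\dcal}\dashv S^{\dcal}$ between $\scal$ and $\dcal$, which I use to transport lifting problems.

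First I would verify that every object $X\in\dcal$ is fibrant: Axiom 4 supplies a smooth retraction $r:\Delta^n\longrightarrow \Lambda^n_k$, so any smooth map $\Lambda^n_k\longrightarrow X$ extends via $r$ over $\Delta^n$, whence $X\longrightarrow *$ has the right lifting property with respect to $J$. In particular $S^{\dcal}X$ is a Kan complex for every $X$, so by adjunction a map $f$ in $\dcal$ has RLP with respect to $J$ iff $S^{\dcal}f$ is a Kan fibration, and has RLP with respect to $I$ iff $S^{\dcal}f$ is a trivial Kan fibration. Combined with the definition of weak equivalences, this identifies $I\mbox{-inj}$ with the class of fibrations that are weak equivalences, and two-out-of-three and retract closure for weak equivalences are inherited from $\scal$. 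A direct check using Axiom 3 shows $J\subseteq I\mbox{-cof}$ (each horn inclusion is assembled from two $I$-cells corresponding to the missing face and the top simplex), so $J\mbox{-cell}\subseteq I\mbox{-cof}$.

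The main step is to prove $J\mbox{-cell}\subseteq W$. By Axiom 4 each horn inclusion is a strong smooth deformation retract in $\dcal$, and I would show this property is preserved by the cobase changes and transfinite composites that build relative $J$-cell complexes. Given a pushout of a strong deformation retract inclusion $A\longhookrightarrow B$ along a smooth map $A\longrightarrow C$, cartesian closedness (property (2)) yields $(B\sqcup_A C)\times I\cong (B\times I)\sqcup_{A\times I}(C\times I)$, so the retraction and rel-$A$ homotopy on $B$ glue with the constant data on $C$ to produce a retraction and rel-$C$ homotopy on the pushout. A parallel argument, invoking property (1) for transfinite composites of $\dcal$-embeddings together with cartesian closedness, propagates the conclusion through sequential colimits. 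Applying $S^{\dcal}$ to a smooth strong deformation retract inclusion yields a simplicial homotopy equivalence, hence a weak equivalence in $\scal$, so every relative $J$-cell complex is a weak equivalence in $\dcal$.

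It remains to verify the smallness hypotheses required for the small object argument and for compact generation. Using property (3), the underlying topological functor $\widetilde{\cdot}$ sends sequential colimits of $\dcal$-embeddings to the corresponding colimits in $\czero$; by Axiom 1, $\widetilde{\Delta^n}$ is the compact topological $n$-simplex, so any smooth map from $\Delta^n$ to such a colimit factors through a finite stage, and the same holds for the diffeological subspaces $\Lambda^n_k$ and $|\partial\Delta[n]|_{\dcal}$ (compact by Axioms 1 and 3). Kan's recognition theorem then delivers the required compactly generated model structure in which every object is fibrant. The principal obstacle I anticipate is the stability argument of the previous paragraph: verifying that cartesian product with $I$ commutes with the relevant pushouts and sequential colimits of $\dcal$-embeddings requires combining properties (1) and (2) carefully, and this is precisely the point at which a naive transcription of the topological argument would fail.
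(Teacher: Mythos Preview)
Your strategy coincides with the paper's in its essential outline: both use the adjunction $(|\ |_{\dcal},S^{\dcal})$ together with Axiom~3 to translate lifting problems against $I$ and $J$ into simplicial ones (this is the paper's Lemma~\ref{F in D}), both invoke Axiom~4 and the pushout stability of deformation retracts (Lemma~\ref{predefretr}, packaged as Lemma~\ref{defretr}) for the finite stages, and both appeal to compactness of $\widetilde{\Delta^n}$ via the underlying-space functor for the smallness hypothesis (Lemma~\ref{sequence}). Whether one phrases the endgame as Kan's recognition theorem or as a direct verification of M1--M5 with the infinite gluing construction is a matter of packaging.

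The one substantive divergence is at the transfinite step of $J\text{-cell}\subseteq W$. You propose to show that a sequential composite of strong $\dcal$-deformation retract inclusions is again one, via a ``parallel argument'' using property~(1) and cartesian closedness. The paper does \emph{not} attempt this. Instead it observes that each finite-stage inclusion $X\hookrightarrow G^{n}(\jcal,f)$ is a deformation retract, hence a weak equivalence by Proposition~\ref{adjointSD}(3), and then reuses the compactness lemma (Lemma~\ref{sequence}) a second time: since every $\Delta^p\to G^{\infty}$ factors through some $G^n$, one has $S^{\dcal}G^{\infty}=\mathrm{colim}\,S^{\dcal}G^{n}$, and a filtered colimit of simplicial weak equivalences is a weak equivalence. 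This is cleaner than your route because it recycles machinery already in place. Your route is viable in principle, but the sentence you devote to it is not enough: the composite retractions $X_n\to X_0$ are automatically compatible, yet the individual deformations $H_n:X_n\times I\to X_n$ do not restrict to $H_{n-1}$ on $X_{n-1}\times I$, so gluing them into a single homotopy on $(\mathrm{colim}\,X_n)\times I\cong\mathrm{colim}(X_n\times I)$ requires a telescoping reparametrization that you have not supplied. You correctly flag this as the principal obstacle, but the obstacle is the coherence of the homotopies, not merely the commutation of $\times I$ with the colimit.
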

The following theorem shows that the singular complex $S^{\dcal}X$ captures smooth homotopical properties of $X$, and that our model structure on $\dcal$ organizes the smooth homotopy theory of diffeological spaces. See \cite[Section 3.1]{CW} or \cite[Chapter 5]{I} for the smooth homotopy groups $\pi^{\dcal}_{p}(X, x)$ of a pointed diffeological space $(X, x)$, and see \cite[p. 25]{GJ} for the homotopy groups $\pi_p(K, x)$ of a pointed Kan complex $(K, x)$.
\begin{thm}\label{homotopygp}
	Let $(X, x)$ be a pointed diffeological space. Then, there exists a natural bijection
	$$
	\varTheta_{X} : \pi^{\dcal}_{p}(X, x) \longrightarrow \pi_{p}(S^{\dcal}X, x) \ \  \text{for} \ \ p \geq 0,
	$$
	that is an isomorphism of groups for $p > 0$. \\
\end{thm}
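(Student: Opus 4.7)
The plan is to show that $S^{\dcal}X$ is a Kan complex and then to match its combinatorial homotopy groups with the smooth homotopy groups of $X$ via the affine comparisons afforded by Axiom 2.

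First, I would verify that $S^{\dcal}X$ is a Kan complex. A horn $\Lambda[p,k]\to S^{\dcal}X$ corresponds by adjointness to a smooth map $|\Lambda[p,k]|_{\dcal}\to X$; by Axiom 3 the canonical injection $|\Lambda[p,k]|_{\dcal}\longhookrightarrow \Delta^{p}$ is a $\dcal$-embedding onto the geometric horn $\Lambda^{p}_{k}$, so this datum is the same as a smooth map $\Lambda^{p}_{k}\to X$. Axiom 4 provides a smooth retraction $\Delta^{p}\to\Lambda^{p}_{k}$, and composition yields the required extension. By the standard theory of Kan complexes, $\pi_{p}(S^{\dcal}X,x)$ can therefore be described as the set of smooth maps $\sigma:\Delta^{p}\to X$ with $\sigma(\partial\Delta^{p})=x$, modulo the equivalence generated by the existence of a smooth $\Delta^{p+1}\to X$ realising the standard two-face Kan configuration; using Axioms 2 and 3 to identify face restrictions, this equivalence coincides with smooth homotopy rel boundary.

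Second, I would construct $\Theta_{X}$. In \cite[Section 3.1]{CW} and \cite[Chapter 5]{I}, $\pi^{\dcal}_{p}(X,x)$ is built from smooth maps $(I^{p},\partial I^{p})\to(X,x)$ modulo smooth homotopy rel boundary. By Axiom 2 the standard affine maps between $\Delta^{p}$ and $I^{p}$, and between their boundaries, are smooth and mutually inverse up to affine (hence smooth) homotopy rel boundary. Pre-composition with the simplex-to-cube comparison then induces a natural bijection $\Theta_{X}$, with the reverse comparison furnishing the inverse.

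Third, for $p\geq 1$, I would verify that $\Theta_{X}$ is a group homomorphism. The Kan loop product in $\pi_{p}(S^{\dcal}X,x)$ is computed by filling a $(p+1)$-horn whose two non-degenerate faces are the given $p$-simplices and reading off the remaining face, while the product in $\pi_{p}^{\dcal}(X,x)$ is cube concatenation along one coordinate. Fixing once and for all a smooth affine subdivision of $\Delta^{p+1}$ that realises the standard prism decomposition of $I^{p}\times I$ (all pieces smooth by Axiom 2, all inclusions of pieces $\dcal$-embeddings by Axiom 3) exhibits a single $(p+1)$-simplex whose face data is exactly the concatenated cube, yielding the homomorphism property. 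I expect the main obstacle to be this last comparison: although Axioms 2--4 supply every affine, face-preserving smooth map and every required $\dcal$-embedding, assembling them so that the Kan-product prism matches the cube-concatenation product is a careful coordinate-level bookkeeping exercise. Naturality in $(X,x)$ is then immediate from the naturality of the affine comparison maps.
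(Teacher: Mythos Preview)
Your proposal has a genuine gap in the construction of $\Theta_X$. You invoke ``standard affine maps between $\Delta^p$ and $I^p$'' that are ``mutually inverse up to affine homotopy rel boundary'', but no such maps exist for $p\geq 2$: an affine bijection between a $p$-simplex and a $p$-cube is impossible, and no affine map of pairs in either direction is a homotopy equivalence rel boundary. Axiom~2 only concerns affine maps between standard \emph{simplices}; it says nothing about maps into or out of cubes. The deeper obstruction is that $\Delta^p$ does \emph{not} carry the sub-diffeology of $\mathbb{A}^p$ along its $(p-2)$-skeleton (Lemma~\ref{secondproperty} and Remark~\ref{skeleton}), so even a piecewise-affine or classically smooth map $I^p\to\Delta^p$ need not be smooth for the diffeology of Definition~\ref{simplices}. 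The same issue undermines your ``smooth affine subdivision of $\Delta^{p+1}$'' in the homomorphism step: reassembling a simplex from affine pieces requires maps \emph{into} $\Delta^{p+1}$ that hit the low-dimensional skeleton, and these are not automatically smooth.

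The paper avoids comparing $\Delta^p$ with $I^p$ directly. It introduces a thickened boundary $\partial_\epsilon\Delta^p=\{x\in\Delta^p: x_i\leq\epsilon\text{ for some }i\}$ and, via \cite[Theorem~3.2]{CW}, identifies $\pi^{\dcal}_p(X,x)\cong[(\Delta^p,\partial_\epsilon\Delta^p),(X,x)]_\dcal$; the point is that this identification only involves the region where $\Delta^p$ and $\Delta^p_{\mathrm{sub}}$ agree. Surjectivity then comes from a $\dcal$-deformation of $\partial_\epsilon\Delta^p$ onto $\dot\Delta^p$ (Lemma~\ref{bdydeform}); injectivity uses that a witnessing smooth $F:\Delta^{p+1}\to X$ can be pushed to be constant near $\mathrm{sk}_{p-1}\Delta^{p+1}$, after which it factors through $\Delta^{p+1}_{\mathrm{sub}}$ and can be precomposed with an explicit smooth map $\Delta^p_{\mathrm{sub}}\times I\to\Delta^{p+1}_{\mathrm{sub}}$. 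For the homomorphism property the paper uses a piecewise-linear folding map $\gamma:\Delta^{p+1}\to\Delta^p_{(p-1)}\cup\Delta^p_{(p+1)}$ which is \emph{not} smooth, but whose composite with $f+g$ is smooth because $f,g$ are constant on $\partial_\epsilon\Delta^p$; absorbing the non-smoothness into regions where the target map is constant is what replaces your prism decomposition.
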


\if0
\par\indent
Last, we make comments on a model structure on the category of Chen spaces (\cite{BFW}, \cite{St}). The notion of a Chen space is also a generalized notion of a $C^{\infty}$-manifold, whose definition is very similar to that of a diffeological space. Thus, we can see that the category $\mathcal{C} h$ of Chen spaces and the category $\dcal$ share properties (1)-(3) (cf. Section 1.1). Further, we can define the standard $p$-simplices $\Delta^{p}_{\ccal h}$, verify Axioms 1-4 for $\Delta^{p}_{\ccal h}$, and establish a model structure on $\ccal h$ in the same manner as in the case of $\dcal$.
\fi
From Theorem \ref{homotopygp}, we see that weak equivalences are just smooth maps inducing isomorphisms on smooth homotopy groups (Corollary \ref{characterization}).
\par\indent
Although we work with diffeological spaces in this paper, our construction of a model structure applies to the category of Chen spaces as well (\cite{BH}, \cite{Quillenequiv}).
\par\indent
The remainder of this paper is organized as follows. In Section 2, we review diffeological spaces and their underlying topological spaces. In Section 3, Axiom 1 for the standard $p$-simplices is verified. In Section 4, the diffeology of the standard $p$-simplex $\Delta^p$ is studied and the notion of a good neighborhood of an open simplex of $\Delta^p$, which is used to verify Axioms 2-4 for the standard $p$-simplices, is introduced. 
Axiom 2 is verified in Section 5. Using $\dcal$-homotopies constructed in Section 6, Axioms 3 and 4 are verified in Sections 7 and 8, respectively. Theorems \ref{model} and \ref{homotopygp} are proved in Section 9. In the Appendix, it is observed that $\Delta^{p}_{\mathrm{sub}}$ satisfies neither Axiom 3 nor 4 for $p > 1$.	

\if0
The following is a historical remark on the homotopical study of diffeological spaces.
\begin{rem}\label{history}
	Many homotopical notions have been introduced and investigated in the theory of diffeological spaces (\cite{I}, \cite{H}, []). In particular, several recent works have been done towards a model structure on the category $\dcal$. \\
	Wu \cite{Wu} and Christensen-Wu \cite{CW} defined fibrations, weak equivalences, and cofibrations in a way similar to ours using $\mathbb{A}^p$ instead of $\Delta^p$, and investigated diffeological spaces from a homotopical viewpoint. They conjectured that with their definitions, $\dcal$ is a model category. \\
	Shimakawa-Haraguchi \cite{SH} claimed that $\dcal$ is a model category. However, as mentioned in \cite{CW}, the proof of \cite{SYH} is incorrect.
	
\end{rem}
\fi
\section{Diffeological spaces and their underlying topological spaces}
In this section, we study diffeological spaces and their underlying topological spaces primarily from a categorical view point, and then introduce the basic homotopical notions in the category $\dcal$. Although we prove several useful lemmas in Section 2.1, the main objective of this section is to recall the basic definitions and results for the reader's convenience and to fix notation and terminology; most results in Sections 2.2 and 2.3 are found in \cite{CSW}, \cite{SYH}, and \cite{KM}. 
\par\indent
A good reference for diffeological spaces is the book \cite{I}; see also Section 2 of \cite{CSW}, which is an elegant three-page introduction to diffeological spaces. 
\subsection{Diffeological spaces}
The category $\dcal$ of diffeological spaces has the obvious underlying set functor. See \cite[pp. 230-233]{FK} for initial and final structures, and initial and final morphisms with respect to the underlying set functor. See \cite[Definition 14.1.6]{MP} for a transfinite composite.
\begin{prop}\label{category D}$(1)$ The category ${\dcal}$ has initial and final structures with respect to the underlying set functor. In particular, ${\dcal}$ is complete and cocomplete. Further, the class of $\dcal$-embeddings (i.e., injective initial morphisms) is closed under pushouts and transfinite composites.\\
	$(2)$ The category ${\dcal}$ is cartesian closed.
\end{prop}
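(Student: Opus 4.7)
The plan is to split the proof cleanly into (1) existence of initial/final structures, (2) closure of $\dcal$-embeddings under pushouts and transfinite composites, and (3) cartesian closedness.

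First, I would establish initial structures explicitly. Given a set $X$ and a sink of maps $f_{i}:X \longrightarrow Y_{i}$ into diffeological spaces, I would define $D_{X}^{\mathrm{init}}$ to consist of those parametrizations $p:U \longrightarrow X$ such that $f_{i}\circ p \in D_{Y_{i}}$ for every $i$, and verify the covering, locality, and smooth compatibility axioms; the universal property is immediate since smoothness of a map $g:Z \longrightarrow X$ reduces plotwise to smoothness of the $f_{i}\circ g$. For final structures, given a source $g_{j}:Z_{j} \longrightarrow X$, I would define a parametrization $p:U \longrightarrow X$ to be a plot iff every $u\in U$ has an open neighborhood $V$ on which $p|_{V}$ is either constant or of the form $g_{j}\circ q$ for some plot $q$ of $Z_{j}$. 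The covering and locality axioms are built in, and smooth compatibility follows by pulling back the covering $\{V\}$. Completeness and cocompleteness then follow from the standard theorem that any category topological over $Set$ (admitting initial and final structures) is complete and cocomplete, with limits and colimits computed on underlying sets.

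Next, for the closure of $\dcal$-embeddings, I would use the characterization: an injective smooth map $i:A \hookrightarrow X$ is an embedding iff a parametrization $p:U \longrightarrow A$ lies in $D_{A}$ whenever $i\circ p \in D_{X}$. For a pushout of an embedding $i:A \hookrightarrow X$ along $f:A \longrightarrow B$, injectivity of the induced map $j:B \longrightarrow X\cup_{A}B$ is a set-theoretic consequence of the injectivity of $i$ (pushouts in $Set$ preserve monomorphisms along arbitrary maps in the balanced sense needed here). For initialness, given $p:U \longrightarrow B$ with $j\circ p$ a plot of $X\cup_{A}B$, I would localize: each $u\in U$ has a neighborhood on which $j\circ p$ factors either as a constant or through $X$ or through $B$, by the explicit description of the final structure on the pushout. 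The case where it factors through $X$ forces the image to land in $i(A)$, so by the embedding property of $i$ the plot lifts into $A$ and then into $B$ via $f$; in the $B$-case we are done immediately. For transfinite composites $X_{0}\hookrightarrow X_{1} \hookrightarrow \cdots \hookrightarrow X_{\lambda}=\mathrm{colim}\,X_{\alpha}$, the same local-factoring description of the final diffeology on $X_{\lambda}$ and transfinite induction on $\alpha$ give that $X_{0}\hookrightarrow X_{\lambda}$ is an embedding.

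Finally, for cartesian closedness, I would equip $X\times Y$ with the initial structure for the two projections (which agrees with the product in $\dcal$ by (1)) and define the internal hom $C^{\infty}(Y,Z)$ to be the set of smooth maps $Y \longrightarrow Z$ with diffeology consisting of those $p:U \longrightarrow C^{\infty}(Y,Z)$ whose adjoint $\hat{p}:U\times Y \longrightarrow Z$ is smooth. The three diffeology axioms for $C^{\infty}(Y,Z)$ follow from the corresponding axioms for $Z$ applied to $\hat{p}$. The adjunction bijection $\Hom_{\dcal}(X\times Y,Z) \cong \Hom_{\dcal}(X,C^{\infty}(Y,Z))$ then reduces, after tracing definitions, to the assertion that a map $\varphi:X \longrightarrow C^{\infty}(Y,Z)$ is smooth iff its adjoint $\hat{\varphi}:X\times Y \longrightarrow Z$ is smooth, which is straightforward on plots of $X\times Y$ of the form $(p,q)$ and extends to arbitrary plots by the definition of the product diffeology together with smooth compatibility.

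I expect the main obstacle to be the pushout case in (1), specifically verifying that the diffeology of $B$ coincides with the sub-diffeology induced from $X\cup_{A}B$; the delicate point is the local analysis of plots of $X\cup_{A}B$ whose image meets both the $X$-part and the $B$-part, which forces one to use initialness of $i$ on each piece of an open covering and then reassemble via the locality axiom. The transfinite case should then follow by induction with the same local-factoring technique, and cartesian closedness is essentially formal once the product and internal hom are in hand.
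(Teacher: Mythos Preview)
Your proposal is correct and follows essentially the same approach as the paper: the paper simply cites \cite{CSW} and \cite{I} for the construction of initial/final structures and cartesian closedness, and for the closure of $\dcal$-embeddings under pushouts and transfinite composites it invokes exactly the local-factoring argument (``from the construction of colimits in $\dcal$'') that you spell out in detail. Your write-up is more self-contained than the paper's terse references, but the underlying ideas---explicit plot-level description of initial/final diffeologies, the observation that a plot of the pushout landing in $j(B)$ and locally lifting to $X$ must in fact land in $i(A)$, and the standard functional diffeology on $C^{\infty}(Y,Z)$---are identical.
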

\begin{proof}
	(1) Initial and final structures are constructed as in \cite[p. 90]{CSW}. Thus, it is easily seen that the underlying set functor $\dcal \longrightarrow Set$ creates limits and colimits (\cite[Theorem 2.5]{CSW}). To prove the last statement, suppose that the square
	\[
	\begin{tikzcd}
	A \arrow{r}{i} \arrow{d} & X \arrow{d} \\
	B \arrow{r}{j} & Y
	\end{tikzcd}
	\]
	is a pushout diagram in $\dcal$ such that $i$ is a $\dcal$-embedding. From the construction of colimits in $\dcal$, we see that $j$ is also a $\dcal$-embedding. Next suppose that $\lambda$ is an ordinal and $X: \lambda \longrightarrow \dcal$ is a $\lambda$-sequence in $\dcal$ (i.e., a colimit-preserving functor) such that the map $X_{\beta} \longrightarrow X_{\beta+1}$ is a $\dcal$-embedding for $\beta+1 < \lambda$. From the construction of colimits in $\dcal$, we can see that the transfinite composite $X_{0} \longrightarrow \underset{\beta < \lambda}{\mathrm{colim}} \ X_{\beta}$ is a $\dcal$-embedding by transfinite induction.\\
	(2) See \cite[pp. 35-36]{I}.
\end{proof}
By Proposition \ref{category D}(1) and \cite[Proposition 10.2.7]{Hi}, the class of $\dcal$-embeddings is closed under coproducts, and hence, under the formation of relative cell complexes (\cite[Definition 15.1.1]{MP}).
\par\indent
 Refer to \cite{BH} for further categorical properties of $\dcal$. Recall the following special cases of initial and final structures, which are most important in practice.
\begin{defn}\label{embedding}
	$(1)$ A smooth map $f :X \longrightarrow Y$ is called a {\sl $\dcal$-embedding} if $f$ is injective and initial with respect to the underlying set functor. A diffeological space $A$ is called a {\sl diffeological subspace} of $X$ if $A$ is a subset of $X$ endowed with the initial structure for the inclusion \ $A \longhookrightarrow X$; the diffeology of $A$ is called the {\sl sub-diffeology} of $X$.\\
	$(2)$ A smooth map $f  :X \longrightarrow Y$ is a {\sl $\dcal$-quotient map} if $f$ is surjective and final with respect to the underlying set functor. A diffeological space $Z$ is a {\sl quotient diffeological space} of $X$ if $Z$ is a quotient set of $X$ endowed with the final structure for the projection $X \longrightarrow Z$; the diffeology of $Z$ is the {\sl quotient diffeology} of $X$.
\end{defn}
The following three lemmas are used repeatedly in this paper.

\begin{lem}\label{initial}
	Consider the commutative square in $\dcal$
	$$
	\begin{tikzpicture}
	\draw[->] (-0.9,0.9) -- (0.9,0.9);
	\draw[->] (-1.1,0.7) -- (-1.1,-0.7);
	\draw[->] (-0.9,-0.9) -- (0.9, -0.9);
	\draw[->] ( 1.1,0.7) -- (1.1, -0.7);
	\node at(-1.11,0.9) {A};
	\node at(-1.11,-0.9) {B};
	\node at(1.11,0.9)  {X};
	\node at(1.11,-0.9) {Y};
	\node at(0,1.1) {i};
	\node at(0,-1.1) {j};
	\node at(-1.3,0) {p};
	\node at(1.3,0) {q};
	\node at(-6,0) {};
	\node at(6,0) {(S)};
	\end{tikzpicture}
	$$				
	with $j$ a $\dcal$-embedding. Then, the square $(S)$ is a pullback diagram in $\dcal$ if and only if $(S)$ is a pullback diagram in $Set$ and $i$ is a $\dcal$-embedding.						
\end{lem}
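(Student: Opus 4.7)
The plan is to leverage the fact, noted in the proof of Proposition \ref{category D}(1), that the underlying set functor $\dcal\longrightarrow Set$ creates limits. Concretely, the pullback of $q$ and $j$ in $\dcal$ is the set-theoretic pullback $P=X\times_{Y}B$ endowed with the initial structure for the two projections to $X$ and $B$. With this description in hand, both implications reduce to comparing the diffeology on $A$ to the pullback diffeology on $P$, with the hypothesis that $j$ is a $\dcal$-embedding playing the decisive role.

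For the $(\Rightarrow)$ direction, I would first observe that since the underlying set functor creates limits, the underlying square of $(S)$ is a pullback in $Set$ and $A$ carries the initial structure from the pair $(p,i)$. Injectivity of $i$ follows formally from the $Set$-pullback property plus injectivity of $j$: if $i(a)=i(a')$, then $j(p(a))=qi(a)=qi(a')=j(p(a'))$, so $p(a)=p(a')$ and hence $a=a'$. For initiality of $i$, I would use the key observation that, because $j$ is initial, a parametrization $\beta$ into $B$ is a plot iff $j\beta$ is a plot in $Y$; applying this with $\beta=p\alpha$ gives that $p\alpha$ is a plot iff $qi\alpha$ is a plot, which is automatic once $i\alpha$ is a plot in $X$. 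Hence the initial diffeology on $A$ from $(p,i)$ coincides with the initial diffeology from $i$ alone, so $i$ is an embedding.

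For the $(\Leftarrow)$ direction, I would consider the canonical smooth map $A\longrightarrow P$ furnished by the pullback property in $\dcal$; by the $Set$-pullback assumption this map is bijective, so it suffices to check that the diffeology on $A$ agrees with the initial structure from $(p,i)$ defining $P$. Since $i$ is a $\dcal$-embedding, the diffeology on $A$ is already the initial one from $i$; and as in the previous paragraph, using initiality of $j$, this coincides with the initial diffeology from $(p,i)$. Hence $A\longrightarrow P$ is a diffeomorphism and $(S)$ is a pullback in $\dcal$.

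The crux of the argument — and the only place where the hypothesis that $j$ is a $\dcal$-embedding is essential — is the reduction of the initial structure from $(p,i)$ to the initial structure from $i$. This is a short observation once one unpacks the definition of ``initial morphism,'' but without it the equivalence fails: for a general $j$, the component $p$ would genuinely constrain the diffeology of $A$, so neither direction of the lemma would hold. I expect the rest of the proof to be essentially bookkeeping about initial structures and the creation of limits.
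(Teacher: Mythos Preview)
Your proposal is correct and follows essentially the same approach as the paper. The paper's proof of $(\Rightarrow)$ tests initiality of $i$ via an arbitrary map $f:S\to A$ with $if$ smooth, derives $pf$ smooth from the $\dcal$-embedding $j$, and invokes the pullback property to conclude $f$ smooth; your phrasing in terms of comparing the initial structure from $(p,i)$ to that from $i$ alone is the same argument unwound at the level of plots, and your more detailed treatment of $(\Leftarrow)$ simply spells out what the paper declares ``obvious.''
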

\begin{proof}$(\Longrightarrow)$ Since a pullback diagram in $\dcal$ is also a pullback diagram in $Set$ (see the proof of Proposition \ref{category D}(1)), we need to only show that $i$ is initial.
	\par\indent
	Let $S$ be a diffeological space and $f: S \longrightarrow A$ a set-theoretic map with $if$ smooth. Then, we have the implications
	\begin{center}
		$if$ is smooth $\Longrightarrow$ $qif$ is smooth $\Longrightarrow$ $pf$ is smooth, 
	\end{center}
	since $j$ is a $\dcal$-embedding. Since $(S)$ is a pullback diagram in $\dcal$, the smoothness of $if$ and $pf$ implies the smoothness of $f$.
	Hence, $i$ is initial.\\
	$(\Longleftarrow)$ Obvious.	
\end{proof}
\begin{lem}\label{initialfinal} 
	Suppose that
	$$
	\begin{tikzpicture}
	\draw[->] (-0.9,0.9) -- (0.9,0.9);
	\draw[->] (-1.1,0.7) -- (-1.1,-0.7);
	\draw[->] (-0.9,-0.9) -- (0.9, -0.9);
	\draw[->] ( 1.1,0.7) -- (1.1, -0.7);
	\node at(-1.11,0.9) {A};
	\node at(-1.11,-0.9) {B};
	\node at(1.11,0.9)  {X};
	\node at(1.11,-0.9) {Y};
	\node at(0,1.1) {i};
	\node at(0,-1.1) {j};
	\node at(-1.3,0) {p};
	\node at(1.3,0) {q};
	\end{tikzpicture}
	$$				
	is a pullback diagram in $\dcal$.\\
	$(1)$ If $j$ is a $\dcal$-embedding, then $i$ is also a $\dcal$-embedding.\\
	$(2)$ If $q$ is a $\dcal$-quotient map, then $p$ is also a $\dcal$-quotient map.
	\begin{proof}
		$(1)$ The result is immediate from Lemma \ref{initial}.\\
		$(2)$ Since $p$ is obviously surjective (see the proof of Proposition \ref{category D}(1)), we need to only show that $p$ is final. For a parametrization $\beta \ : U \longrightarrow B$, we have the implications
		\begin{align*}
		\beta \in D_B &\Longrightarrow j \circ \beta \in D_Y\\
		&\Longrightarrow  j \circ \beta \text{ lifts locally along $q$ in ${\dcal}$}\\
		&\Longrightarrow  \beta \text{ lifts locally along $p$ in ${\dcal}$},
		\end{align*}
		completing the proof (see \cite[p. 90]{CSW}).
	\end{proof}
\end{lem}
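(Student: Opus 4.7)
The plan is to reduce each part to the previously established Lemma \ref{initial} and to the elementary fact that the forgetful functor $\dcal \longrightarrow Set$ creates limits (which was recorded in the proof of Proposition \ref{category D}(1)).

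For part $(1)$, I would argue as follows. Since the underlying set functor preserves limits, the given pullback square in $\dcal$ is in particular a pullback square in $Set$. Combined with the hypothesis that $j$ is a $\dcal$-embedding, Lemma \ref{initial} immediately yields that $i$ is a $\dcal$-embedding. So $(1)$ is essentially a one-line reduction to the previous lemma.

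For part $(2)$, I would first note that because the forgetful functor to $Set$ preserves limits and surjectivity of functions is stable under pullback in $Set$, the surjectivity of $q$ forces $p$ to be surjective. It then remains to show that $p$ is final, i.e.\ that $D_B$ equals the set of parametrizations of $B$ that lift locally along $p$ to plots of $A$. One direction is immediate: since $p$ is smooth, any locally $p$-liftable parametrization is a plot of $B$. For the converse, take $\beta \in D_B$ and a point $u \in U$; then $j\circ\beta \in D_Y$, so finality of $q$ produces a neighborhood $V$ of $u$ and a plot $\widetilde\gamma : V \longrightarrow X$ with $q\circ\widetilde\gamma = j\circ\beta|_V$. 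The pair $(\beta|_V,\widetilde\gamma)$ is then a cone over the cospan $B\longrightarrow Y \longleftarrow X$, so the pullback universal property supplies a (necessarily smooth) map $V \longrightarrow A$ whose composite with $p$ is $\beta|_V$; thus $\beta$ lifts locally along $p$, and $p$ is final.

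I do not expect any real obstacle here. The only place that requires a moment's care is verifying, in part $(2)$, that $(\beta|_V,\widetilde\gamma)$ really forms a compatible cone so that the pullback universal property is applicable; this compatibility is automatic from the commutativity of the original square together with the relation $q\circ\widetilde\gamma = j\circ\beta|_V$ produced by the local lift.
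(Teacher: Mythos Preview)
Your proposal is correct and follows essentially the same approach as the paper: part (1) is reduced directly to Lemma \ref{initial}, and part (2) first handles surjectivity via the underlying-set pullback and then verifies finality by locally lifting $j\circ\beta$ along $q$ and using the pullback universal property to obtain the local lift along $p$. The paper compresses the last implication in (2) into a single line, whereas you spell out the cone argument explicitly, but the content is identical.
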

\begin{lem}\label{times X}
	Let $p: A \longrightarrow B$ be a $\dcal$-quotient map. Then, $p \times 1_X: A \times X \longrightarrow B \times X$ is also a $\dcal$-quotient map for any diffeological space $X$.
\end{lem}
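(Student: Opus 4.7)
The plan is to verify the two defining conditions of a $\dcal$-quotient map for $p \times 1_X$ directly, using the description of plots in a product together with the local-lifting characterization of plots in a quotient that was already invoked in the proof of Lemma \ref{initialfinal}(2). First, surjectivity of $p \times 1_X$ is immediate from surjectivity of $p$, so the task reduces to showing that $p \times 1_X$ is final with respect to the underlying set functor. Recall from Proposition \ref{category D}(1) that a parametrization $\gamma : U \longrightarrow B \times X$ is a plot if and only if its two components $\gamma_B : U \longrightarrow B$ and $\gamma_X : U \longrightarrow X$ are plots.

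For the forward direction of the lifting criterion, starting from a plot $\gamma = (\gamma_B, \gamma_X)$ of $B \times X$, I invoke the quotient-lifting characterization for $p$ to obtain an open cover $\{U_i\}$ of $U$ together with plots $\alpha_i : U_i \longrightarrow A$ satisfying $p \circ \alpha_i = \gamma_B|_{U_i}$. Then $(\alpha_i,\, \gamma_X|_{U_i}) : U_i \longrightarrow A \times X$ is a plot whose image under $p \times 1_X$ is $\gamma|_{U_i}$, giving the required local lift. For the reverse direction, suppose $\gamma$ admits local lifts $(\alpha_i, \beta_i) : U_i \longrightarrow A \times X$ along $p \times 1_X$; both $\alpha_i$ and $\beta_i$ are then plots, so $p \circ \alpha_i = \gamma_B|_{U_i}$ and $\beta_i = \gamma_X|_{U_i}$ are plots of $B$ and $X$ respectively, and the locality axiom forces $\gamma_B$ and $\gamma_X$ themselves to be plots, so $\gamma$ is a plot of $B \times X$.

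There is no real obstacle here; the argument is a direct unwinding of the definitions, and the only point requiring care is that the local lift of $\gamma_B$ can be paired with the corresponding restriction of $\gamma_X$ to produce a genuine plot of $A \times X$, which is immediate from the product diffeology. An alternative, more categorical, route would be to observe that $\dcal$ is cartesian closed by Proposition \ref{category D}(2), so $(-) \times X$ is a left adjoint and in particular preserves the colimit presentation of $B$ associated with its quotient diffeology; but the direct verification outlined above is shorter and matches the style of the surrounding lemmas.
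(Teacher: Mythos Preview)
Your direct verification is correct. The paper's proof is much terser and offers two routes, neither of which is your primary one: first, cartesian closedness of $\dcal$ (Proposition~\ref{category D}(2)), which you mention only as an aside; second, the observation that the square
\[
\begin{tikzcd}
A \times X \arrow{r}{p \times 1_X} \arrow[swap]{d}{proj} & B \times X \arrow{d}{proj}\\
A \arrow{r}{p} & B
\end{tikzcd}
\]
is a pullback, so Lemma~\ref{initialfinal}(2) applies directly. Your argument is in effect the proof of Lemma~\ref{initialfinal}(2) specialized to this square and written out by hand, so it is not genuinely different in content---just less packaged. The pullback formulation buys reusability (the same one-line deduction works for any base change), while your explicit lifting argument makes the mechanism visible without appealing to an earlier lemma. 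One minor remark: your ``reverse direction'' is automatic once $p\times 1_X$ is known to be smooth, since composition with a smooth map and locality already force any locally liftable parametrization to be a plot; only the forward direction carries content.
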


\begin{proof}
	The result is easily proved using the cartesian closedness of $\dcal$ (Proposition \ref{category D}(2)). 
	Alternatively, since $p \times 1_X: A \times X \longrightarrow B \times X$ is a pullback of $p : A \longrightarrow B$ along $proj : B \times X \longrightarrow B$, the result can be deduced from Lemma \ref{initialfinal}(2). 
\end{proof}
\subsection{Arc-generated spaces}
A topological space $X$ is {\sl arc-generated} if the topology of $X$ is final for $C^{0}_X \ := \{$continuous curves from $\rbb$ to $X \}$ (cf. \cite[p. 45]{FK}). We investigate the fundamental properties of arc-generated spaces; arc-generated spaces form the target category of the underlying topological space functor for diffeological spaces (Section 2.3).
\par\indent
Let $\mathcal{T}$ be the category of topological spaces and $\mathcal{C}^0$ be the full subcategory of $\mathcal{T}$ consisting of arc-generated spaces.
\begin{prop}\label{categoryC0}
	The category $\mathcal{C}^0$ has initial and final structures with respect to the underlying set functor. In particular, $\mathcal{C}^0$ is complete and cocomplete.
\end{prop}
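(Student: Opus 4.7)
My plan is to exhibit $\mathcal{C}^{0}$ as a coreflective subcategory of the category $\mathcal{T}$ of all topological spaces (which is itself topological over $Set$) and then transport initial and final structures from $\mathcal{T}$. For $(X,\tau)\in\mathcal{T}$ I would define the arc-generated refinement
\[
\kappa\tau := \bigl\{\, U \subseteq X \,\big|\, \gamma^{-1}(U)\text{ is open in }\mathbb{R}\text{ for every continuous }\gamma:\mathbb{R}\to(X,\tau)\,\bigr\},
\]
and check routinely that $\kappa\tau$ is a topology with $\kappa\tau\supseteq\tau$, that $(X,\kappa\tau)\in\mathcal{C}^{0}$, and that continuous curves $\mathbb{R}\to(X,\kappa\tau)$ coincide with continuous curves $\mathbb{R}\to(X,\tau)$; hence $\kappa:\mathcal{T}\to\mathcal{C}^{0}$ is a functor right adjoint to the inclusion $\mathcal{C}^{0}\hookrightarrow\mathcal{T}$.

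First I would handle final structures. Given a sink $\{f_{i}:Y_{i}\to X\}$ of set-maps with $Y_{i}\in\mathcal{C}^{0}$, let $\tau$ be the final topology on $X$ in $\mathcal{T}$. I claim $(X,\tau)\in\mathcal{C}^{0}$, so $\tau$ already realises the final structure in $\mathcal{C}^{0}$. Indeed, suppose $U\subseteq X$ has $\gamma^{-1}(U)$ open for every continuous $\gamma:\mathbb{R}\to(X,\tau)$. For each $i$ and every continuous $\delta:\mathbb{R}\to Y_{i}$ the composite $f_{i}\delta$ is such a curve, so $\delta^{-1}(f_{i}^{-1}U)$ is open in $\mathbb{R}$; arc-generation of $Y_{i}$ then forces $f_{i}^{-1}U$ to be open in $Y_{i}$, and hence $U\in\tau$.

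Next I would treat initial structures. Given a source $\{g_{i}:X\to Z_{i}\}$ with $Z_{i}\in\mathcal{C}^{0}$, let $\sigma$ be the initial topology on $X$ in $\mathcal{T}$ and set $\tau=\kappa\sigma$. Each $g_{i}:(X,\tau)\to Z_{i}$ is continuous because $\tau$ refines $\sigma$. To verify initiality in $\mathcal{C}^{0}$, let $h:W\to X$ be a set-map from $W\in\mathcal{C}^{0}$ with every $g_{i}h$ continuous; since $W$ is arc-generated and continuous curves into $(X,\tau)$ agree with those into $(X,\sigma)$, it suffices to show $h\gamma:\mathbb{R}\to(X,\sigma)$ is continuous for every continuous $\gamma:\mathbb{R}\to W$, and this follows from the initial property of $\sigma$ in $\mathcal{T}$ and the hypothesis on the $g_{i}h$.

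The ``in particular'' clause is then a formal consequence: the faithful forgetful functor $\mathcal{C}^{0}\to Set$ admits initial and final lifts for arbitrary sources and sinks, so it creates limits and colimits, and $\mathcal{C}^{0}$ is complete and cocomplete, with (co)limits computed by endowing the underlying set-theoretic (co)limit with the appropriate initial (respectively final) structure (cf.\ \cite[pp.\ 230--233]{FK}). The main (though modest) technical point I expect is the stability of arc-generation under final topologies established in the second paragraph; once this is in hand, the initial case and (co)completeness follow from the coreflective adjunction by purely formal considerations.
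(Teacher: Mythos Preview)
Your proposal is correct and matches the approach the paper has in mind: the paper's proof is only the one-line assertion ``Initial and final structures are easily constructed,'' with a pointer to Proposition~\ref{category D}(1) for deducing (co)completeness, and immediately afterward it introduces exactly your functor $\kappa$ (there called $\alpha$) and records the adjunction $I:\mathcal{C}^{0}\rightleftarrows\mathcal{T}:\alpha$ in Lemma~\ref{adjointC0T}. Your write-up simply spells out the details the paper leaves implicit---final $\mathcal{T}$-structures from arc-generated sources are already arc-generated, and initial structures are obtained by applying $\alpha$ to the $\mathcal{T}$-initial topology---so there is no substantive difference in strategy.
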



\begin{proof}
	Initial and final structures are easily constructed. See the proof of Proposition \ref{category D}(1) for the latter part.
\end{proof}
For a topological space $X$, $\alpha X$ is the set $X$ endowed with the final topology for $C^{0}_X$. Then, $\alpha$ defines a functor $ \ \alpha: \mathcal{T}\longrightarrow \mathcal{C}^0$. Let $I$ be the inclusion functor $\mathcal{C}^0 \longrightarrow \mathcal{T}$.
\begin{lem}\label{adjointC0T}
	$I  :\mathcal{C}^0 \rightleftarrows\mathcal{T}  :\alpha$ is an adjoint pair, and the composite $\alpha I$ is the identity functor of $\mathcal{C}^0$.
\end{lem}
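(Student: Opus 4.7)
The plan is to verify the two assertions in turn, treating first the identity $\alpha I = \mathrm{id}_{\mathcal{C}^0}$ and then the adjunction. For $\alpha I = \mathrm{id}$, let $Y \in \mathcal{C}^0$. The underlying set of $\alpha(IY)$ is $Y$, and its topology is by definition the final topology for $C^0_{IY} = C^0_Y$; but $Y$ being arc-generated means precisely that its own topology already coincides with this final topology. Hence $\alpha(IY) = Y$, and functoriality is immediate since $\alpha$ sends the identity set-map to the identity set-map.

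For the adjunction $I \dashv \alpha$, the preliminary I would record is that for any $X \in \mathcal{T}$ the identity set-map $\alpha X \to X$ is continuous, since the final topology for $C^0_X$ is the finest topology on the set $X$ making every curve in $C^0_X$ continuous and is therefore at least as fine as the given topology on $X$. As a direct consequence, $C^0_{\alpha X} = C^0_X$: one inclusion is the defining universal property of $\alpha X$, and the other follows from the continuity of $\alpha X \to X$.

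I would then define the natural map
\[
\Phi_{Y,X} : \Hom_{\mathcal{T}}(IY, X) \longrightarrow \Hom_{\mathcal{C}^0}(Y, \alpha X)
\]
by sending a continuous $f : IY \to X$ to the same underlying set-map regarded as $Y \to \alpha X$. To see that this is well-defined, I would invoke the arc-generated hypothesis on $Y$ to reduce continuity of $f : Y \to \alpha X$ to continuity of $f \circ \gamma : \mathbb{R} \to \alpha X$ for every $\gamma \in C^0_Y$, and this holds because $f \circ \gamma \in C^0_X = C^0_{\alpha X}$. The inverse of $\Phi_{Y,X}$ is simply post-composition with the continuous identity $\alpha X \to X$; these two constructions are visibly mutually inverse and natural in $Y$ and $X$.

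The only real subtlety I anticipate lies in keeping careful track of which of the two topologies is in effect on the same underlying set at each step of the argument. Once the continuity of $\alpha X \to X$ and the consequent equality $C^0_{\alpha X} = C^0_X$ are in place, the adjunction reduces cleanly to the arc-generated universal property of $Y$ applied curve by curve, and the unit of the adjunction is literally the identity on $Y$, consistently with $\alpha I = \mathrm{id}_{\mathcal{C}^0}$.
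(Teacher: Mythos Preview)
Your proposal is correct and is precisely the verification that the paper elides: the paper's proof reads in its entirety ``The result is obvious from the definitions.'' You have simply unpacked what ``obvious'' means here, and your argument is the standard one for a coreflective subcategory defined by a final-structure condition.
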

\begin{proof}
	The result is obvious from the definitions.
\end{proof}
The following lemma shows that many important spaces are arc-generated.
\begin{lem}\label{exC0} $(1)$ If $X$ is a locally arcwise connected space that satisfies the first axiom of countability, then $X$ is arc-generated. \\
	$(2)$ Every $CW$-complex is arc-generated.
\end{lem}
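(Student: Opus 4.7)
The plan for both parts is to argue by contrapositive: given a subset $U \subseteq X$ that is \emph{not} open in $X$, I will produce a continuous curve $c : \rbb \longrightarrow X$ for which $c^{-1}(U)$ fails to be open in $\rbb$; the other implication, that $c^{-1}(U)$ is open in $\rbb$ whenever $U$ is open in $X$, is automatic from continuity of $c$. Combined, these show that the topology of $X$ coincides with the final topology for $C^{0}_{X}$, which is exactly the defining property of an arc-generated space.

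For part (1), I would fix $x \in U$ which is not an interior point of $U$, and use first countability to choose a nested neighbourhood basis $V_{1} \supseteq V_{2} \supseteq \cdots$ at $x$; local arcwise connectedness lets me refine each $V_{n}$ so that it is arcwise connected. Since $U$ is not a neighbourhood of $x$, each $V_{n}$ meets $X \setminus U$, so I pick $x_{n} \in V_{n} \setminus U$. Both $x_{n}$ and $x_{n+1}$ lie in $V_{n}$, so I may choose a continuous path $\gamma_{n} : [1/(n+1),\, 1/n] \longrightarrow V_{n}$ from $x_{n+1}$ to $x_{n}$. Concatenating the $\gamma_{n}$ defines $c$ on $(0, 1]$; setting $c(0) = x$ and extending by constants on $(-\infty, 0] \cup [1, \infty)$ yields a curve $c : \rbb \longrightarrow X$. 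The key verification is continuity at $0$: for any $n$ the telescoping inclusions $V_{m+1} \subseteq V_{m}$ give $c([0, 1/n]) \subseteq V_{n}$, and $\{V_{n}\}$ is a neighbourhood basis at $x$. Finally, $0 \in c^{-1}(U)$ while $1/n \notin c^{-1}(U)$ for every $n$, so $c^{-1}(U)$ is not a neighbourhood of $0$.

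For part (2), let $X$ be a CW-complex with characteristic maps $\chi_{\alpha} : D^{n_{\alpha}} \longrightarrow X$. The weak topology on $X$ characterises open sets by the condition that $\chi_{\alpha}^{-1}(U)$ is open in $D^{n_{\alpha}}$ for every $\alpha$. Each $D^{n_{\alpha}}$ is first countable and locally arcwise connected, hence is arc-generated by part (1). Given $U \subseteq X$ with $c^{-1}(U)$ open for every $c \in C^{0}_{X}$, I verify openness of $\chi_{\alpha}^{-1}(U)$ by testing against continuous $\gamma : \rbb \longrightarrow D^{n_{\alpha}}$: the composite $\chi_{\alpha} \circ \gamma$ lies in $C^{0}_{X}$, so $\gamma^{-1}(\chi_{\alpha}^{-1}(U)) = (\chi_{\alpha} \circ \gamma)^{-1}(U)$ is open in $\rbb$ by hypothesis. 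By part (1) applied to $D^{n_{\alpha}}$, this forces $\chi_{\alpha}^{-1}(U)$ to be open, and hence $U$ is open in $X$.

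The main obstacle is the curve construction in part (1): I must thread a sequence $(x_{n})$ witnessing the failure of $x$ to be interior to $U$ into a single continuous curve on $\rbb$, which is precisely where first countability (to secure a countable witness) and local arcwise connectedness (to join $x_{n+1}$ to $x_{n}$ inside a shrinking neighbourhood of $x$) must work in tandem. Once that construction is pinned down, part (2) reduces cleanly to part (1) through the weak-topology characterisation of CW-complexes, since the characteristic maps have disk domains to which part (1) applies.
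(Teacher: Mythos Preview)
Your argument is correct. For part (1) the paper simply defers to \cite[Proposition 3.11]{CSW}, and the construction you give---threading a sequence $x_n \to x$ with $x_n \notin U$ into a single curve via arcwise connected nested neighbourhoods---is exactly the standard proof that reference points to. For part (2) your approach and the paper's are the same argument at different levels of abstraction: the paper observes that a $CW$-complex is a colimit in $\mathcal{T}$ of disks and spheres (arc-generated by part (1)) and invokes the adjunction $I \dashv \alpha$ of Lemma~\ref{adjointC0T} to conclude that such colimits remain arc-generated; you unpack this by testing openness against the characteristic maps $\chi_\alpha$ and reducing to arc-generatedness of each $D^{n_\alpha}$. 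The categorical phrasing is slightly more reusable (it shows at once that any $\mathcal{T}$-colimit of arc-generated spaces is arc-generated), while your version is self-contained and avoids appealing to the adjoint pair.
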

\begin{proof} (1) The proof is not difficult (cf. the proof of \cite[Proposition 3.11]{CSW}).\\
	(2) Since a $CW$-complex is constructed from the disks $D^n$ and spheres $S^{n-1}$ via colimits in $\mathcal{T}$, the result follows from Part 1 and Lemma \ref{adjointC0T}.
\end{proof}
A topological space $X$ is {\sl $\Delta$-generated} (respectively, {\sl $\Delta^1$-generated}) if the topology of $X$ is final for the singular simplices (respectively, the singular 1-simplices). 
The notions of arc-generatedness, $\Delta^1$-generatedness, and $\Delta$-generatedness are mutually equivalent. This follows from the obvious implications
$$
\text{arc-generated} \ \Longrightarrow \ \Delta^{1}\text{-generated} \ \Longrightarrow \ \Delta\text{-generated},
$$
and the fact that $\Delta^p$ is arc-generated (Lemma \ref{exC0}(1)).
\if0
\begin{lem}\label{equivarc}
	
	The following conditions for a topological space $X$ are equivalent:
	\item[(i)] $X$ is arc-generated.
	\item[(ii)] $X$ is $\Delta^1$-generated.
	\item[(iii)] $X$ is $\Delta$-generated.
	\begin{proof}
		The implications ($i$)$\Longrightarrow$ ($ii$) $\Longrightarrow$ ($iii$) are obvious. Since $\Delta^p$ is arc-generated by Lemma \ref{exC0}(1), the implication ($iii$) $\Longrightarrow$ ($i$) holds.
	\end{proof}
	\fi
	\begin{prop}\label{convenC0}
		$(1)$ The category $\czero$ is cartesian closed. \\
		$(2)$ Let $X$ and $Y$ be arc-generated spaces, with $Y$ locally compact. Then, $X\underset{\mathcal{C}^0}{\times}Y= X \underset{\mathcal{T}}{\times}Y$ holds, where $\underset{{C}^0}{\times}$ and $\underset{\mathcal{T}}{\times}$ denote the products in $\mathcal{C}^0$ and $\mathcal{T}$, respectively.
		\begin{proof}
			Since being arc-generated is the same as being $\Delta^1$-generated, the result follows from \cite[Theorems 3.3 and 3.4, and Examples 3.5]{Wy}.
		\end{proof}	
	\end{prop}
	\subsection{Underlying topological space of a diffeological space}
	For a diffeological space $A = (A, D_A)$, the {\sl underlying topological space} $\widetilde{A}$ is defined to be the set $A$ endowed with the final topology for $D_{A}$; the  underlying topology of $A$ is also referred to as the {\sl $D$-topology} of $A$. (We often omit the symbol $\widetilde{\phantom A}$ if there is no confusion in context.)
	\par\indent
	\if0
	(1) Let $X$ be a diffeological space and $V$ an open subset of $c^{\infty}{X}$. Then, the $c^{\infty}$-topology of the diffeological subspace $V$ of \,$X$ coincides with the topology induced from the $c^{\infty}$-topology of \,$X$. \\
	(2) The $c^{\infty}$-topology of a Fr\'{e}chet space $E$ coincides with the original vector space topology of $E$.
\end{lem}
\begin{proof}
	(1)	Since any open set of $\rbb$ is diffeomorphic to a coproduct of copies of $\rbb$, the result is obvious. \\
	(2) \cite[Theorem 4.11(1)]{KM}. (See \cite[Theorem 3.7]{CSW} for the case of $E = \rbb^{n}$.)
\end{proof}
\fi
\if0
\begin{lem}\label{D-topology}
	For a diffeological space $X$, the $D$-topology of $X$ coincides with the $c^{\infty}$-topology of $X$.
	\begin{proof}
		The result follows immediately from Lemma \ref{preDC8}.
	\end{proof}
\end{lem}
\fi
Since the $D$-topology of a diffeological space is arc-generated (Lemma \ref{exC0}(1)), we have the underlying space functor $\widetilde{\cdot}:\mathcal{D} \longrightarrow \czero$.
For an arc-generated space $X$, $RX$ is the set $X$ endowed with the diffeology $D_{RX}$ consisting of all continuous parametrizations. Then, $R$ defines a functor from $\mathcal{C}^0$ to $\mathcal{D}$.
\begin{prop}\label{adjointDC0}
	$\widetilde{\cdot}  :\mathcal{D} \rightleftarrows \czero  :R$ is an adjoint pair, and the composite $\, \widetilde{\cdot} \circ R$ is the identity functor of $\czero$.
\end{prop}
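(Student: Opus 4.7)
The plan is to prove both halves by directly unwinding the definitions and applying the universal properties of initial and final structures. First I will verify that $R$ is well-defined as a functor: for an arc-generated space $X$, the set $D_{RX}$ of all continuous parametrizations satisfies the three diffeology axioms, since covering is immediate (constants are continuous), locality is the local nature of continuity, and smooth compatibility uses only that smooth maps between Euclidean opens are continuous. A continuous $h : X \longrightarrow Y$ in $\czero$ sends continuous parametrizations to continuous parametrizations by post-composition, making $h : RX \longrightarrow RY$ smooth, so $R$ is functorial.

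Next, for the adjunction, I will show that $\dcal(A, RX)$ and $\czero(\widetilde{A}, X)$ coincide as subsets of $\mathrm{Set}(A, X)$; the bijection will then be the identity on underlying maps, with naturality automatic. A map $f : A \longrightarrow X$ lies in $\dcal(A, RX)$ precisely when $f \circ p : U \longrightarrow X$ is continuous for every plot $p \in D_A$, since $D_{RX}$ is by definition the set of all continuous parametrizations. By the universal property of the final topology that defines $\widetilde{A}$, the same condition characterises continuity of $f : \widetilde{A} \longrightarrow X$, so the hom-sets literally agree.

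For the identity $\widetilde{\cdot} \circ R = \mathrm{Id}_{\czero}$, I will compare the original topology $\tau_X$ of $X$ with the final topology $\tau'$ on the set $X$ for $D_{RX}$: every element of $D_{RX}$ is $\tau_X$-continuous, so $\tau_X \subseteq \tau'$; conversely, arc-generatedness of $X$ identifies $\tau_X$ as the final topology for continuous curves $\rbb \longrightarrow X$, and since these curves already lie in $D_{RX}$, enlarging the indexing family can only coarsen the resulting final topology, giving $\tau' \subseteq \tau_X$. The only real subtlety is keeping straight the direction of the inclusions between final topologies associated with different families of maps; beyond this bookkeeping, both statements fall out formally, because everything reduces to the single condition ``$f \circ p$ is continuous for every plot $p$ of $A$''.
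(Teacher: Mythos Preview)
Your proposal is correct and follows exactly the approach the paper has in mind; the paper's proof is literally the single word ``Obvious,'' and what you have written is a careful unpacking of that obviousness via the universal property of final topologies and the definition of $D_{RX}$. Your handling of the inclusion $\tau' \subseteq \tau_X$ via the monotonicity of final topologies under enlarging the indexing family is precisely the point that makes arc-generatedness enter, and it is stated correctly.
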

\begin{proof}
	Obvious.
\end{proof}
For the $D$-topology of a quotient diffeological space, we have the following result.
\begin{lem}\label{underlyingtop} The functors $\widetilde{\cdot}  :\mathcal{D} \longrightarrow \czero$ and $I  :\czero \longrightarrow \mathcal{T}$ preserve final structures with respect to the underlying set functors. In particular, if $Y$ is a quotient diffeological space of $X$, then $\widetilde{Y}$ is a quotient topological space of $\widetilde{X}$.
\end{lem}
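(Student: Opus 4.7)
The plan is to verify the two preservation claims separately, after which the quotient statement will be an immediate corollary. Both $\widetilde{\cdot}$ and $I$ are left adjoints that commute with the forgetful functor to $Set$ (by Proposition \ref{adjointDC0} and Lemma \ref{adjointC0T}), so on abstract grounds one expects them to preserve final structures; however, I would give direct concrete arguments using the explicit descriptions of the $D$-topology and of arc-generated spaces, since these fit on one line each.

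For $\widetilde{\cdot}$, let $\{f_i : X_i \longrightarrow Y\}$ be a sink in $\dcal$ with $Y$ carrying the final diffeology. One direction of the desired characterization of open sets of $\widetilde{Y}$ is immediate from continuity of each $\widetilde{f_i}$: if $O\subseteq Y$ is $D$-open, then each $f_i^{-1}(O)$ is $D$-open in $X_i$. Conversely, given $O \subseteq Y$ such that each $f_i^{-1}(O)$ is $D$-open, I would invoke the explicit description of the final diffeology from \cite[p.~90]{CSW}: any plot $p : U \longrightarrow Y$ factors locally as $f_i \circ q$ for some plot $q$ of some $X_i$, so $p^{-1}(O)$ is locally equal to $q^{-1}(f_i^{-1}(O))$ and hence open; thus $O$ is $D$-open, and $\widetilde{Y}$ carries the final topology in $\czero$.

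For $I$, the essential point is that the $\tcal$-final topology $\tau$ of a sink $\{g_i : A_i \longrightarrow B\}$ in $\czero$ is automatically arc-generated, whence $\tau$ coincides with the $\czero$-final topology. To see this, I would take $O \subseteq B$ with $c^{-1}(O)$ open in $\rbb$ for every continuous curve $c : \rbb \longrightarrow (B, \tau)$; for any continuous curve $\gamma : \rbb \longrightarrow A_i$ the composite $g_i \circ \gamma$ is such a $c$, so $\gamma^{-1}(g_i^{-1}(O))$ is open, and arc-generatedness of $A_i$ then forces $g_i^{-1}(O)$ open in $A_i$, whence $O \in \tau$. The in-particular statement now follows: a $\dcal$-quotient map $X \longrightarrow Y$ exhibits $Y$ with the final diffeology for the singleton sink $\{X \to Y\}$, so successive application of $\widetilde{\cdot}$ and $I$ yields a surjective map $\widetilde{X} \longrightarrow \widetilde{Y}$ whose codomain carries the $\tcal$-final topology, namely the quotient topology. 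The only nontrivial step is the arc-generatedness verification for $I$, and even this is a one-line curve-lifting argument, so I anticipate no real obstacle.
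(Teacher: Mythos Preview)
Your argument is correct. The only point that could be stated more carefully is the description of the final diffeology: a plot of $Y$ need only locally factor through some $f_i$ \emph{or be locally constant}, but the constant case is trivial for checking $D$-openness, so this does not affect your conclusion.

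The paper takes a different, purely formal route: it simply cites \cite[Proposition~8.7.4]{FK}, which is the general fact that a concrete functor between concrete categories preserves final structures whenever it has a concrete right adjoint. Since both $\widetilde{\cdot}$ and $I$ are left adjoints commuting with the underlying set functors (Proposition~\ref{adjointDC0} and Lemma~\ref{adjointC0T}), the result follows in one line. Your approach instead unpacks the definitions of $D$-topology and arc-generatedness and verifies the claims by hand; this is more self-contained and makes the mechanism visible (in particular, your curve-lifting argument for $I$ is exactly why the $\tcal$-final topology of an arc-generated sink is already arc-generated), at the cost of a few more lines. Either approach is adequate here; you might simply note that the abstract argument you allude to in your first paragraph is precisely what the paper invokes.
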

\begin{proof}
	Since we have the two adjoint pairs
	$$\mathcal{D} \overset{\widetilde{\cdot}}{\underset{R}\rightleftarrows}\mathcal{C}^0\overset{I}{\underset{\alpha}\rightleftarrows}\mathcal{T}$$
	(Lemma \ref{adjointC0T} and Proposition \ref{adjointDC0}), the result follows immediately from \cite[Proposition 8.7.4]{FK}.
\end{proof}
The $D$-topology of a diffeological subspace need not coincide with the subspace topology (see \cite[Example 3.20]{CSW}). However, we have the following result, which is needed in this paper.
\begin{lem}\label{underlyingconvex}
	Let $V$ be a subset of $\rbb^{n}$. If $V$ is open or convex, then the $D$-topology of the diffeological subspace $V$ coincides with the topology induced from $\rbb^{n}$.		
\end{lem}
\begin{proof}
	We can easily prove the result using Special Curve Lemma \cite[p. 18]{KM} (cf. \cite[Section 3.3]{CSW}).
\end{proof}
Propositions \ref{convenC0}, \ref{adjointDC0}, and \ref{finiteproducts} explain that $\czero$ is more suitable than $\tcal$ as the target category of the underlying topological space functor for diffeological spaces.
\begin{prop}\label{finiteproducts}
	The underlying topological space functor $\widetilde{\cdot} :\mathcal{D} \longrightarrow \czero$ preserves finite products.
\end{prop}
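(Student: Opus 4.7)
The plan is to identify both $\widetilde{X \times Y}$ and $\widetilde{X} \times_{\mathcal{C}^{0}} \widetilde{Y}$ with the same colimit in $\mathcal{C}^{0}$, by sliding the left adjoint $\widetilde{\cdot}$ past a canonical colimit decomposition of $X$ and $Y$ by their plots, and exploiting cartesian closedness on both sides to pull the product through the colimit.

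First I would establish that every diffeological space $X$ is the colimit in $\mathcal{D}$ of the functor $(U, p) \mapsto U$ from the plot category $\mathrm{Plot}(X)$ (with smooth factorizations as morphisms). Universality on underlying sets is immediate: each point is hit by a constant plot, and compatibility of a cocone with morphisms $c_{p(u)} \to p$ in $\mathrm{Plot}(X)$ pins down the target value. Universality at the level of diffeologies follows from the construction of final structures (Proposition \ref{category D}(1)). Cartesian closedness of $\mathcal{D}$ (Proposition \ref{category D}(2)) then makes $(-)\times Y$ and $U_X\times(-)$ colimit-preserving, giving
\[
X \times Y \;=\; \underset{(U_X, p_X),\,(U_Y, p_Y)}{\mathrm{colim}}\, U_X \times U_Y,
\]
and applying $\widetilde{\cdot}$, which preserves colimits as a left adjoint (Proposition \ref{adjointDC0}), yields $\widetilde{X \times Y} = \mathrm{colim}\, \widetilde{U_X \times U_Y}$.

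The local computation is the key ingredient: for $U_X \subseteq \mathbb{R}^{n}$ and $U_Y \subseteq \mathbb{R}^{m}$ open, the diffeological product $U_X \times U_Y$ is the standard open subset of $\mathbb{R}^{n+m}$, so $\widetilde{U_X \times U_Y}$ carries the usual Euclidean topology by Lemma \ref{preDC8}. Since open subsets of Euclidean spaces are locally compact and arc-generated (Lemma \ref{exC0}(1)), Proposition \ref{convenC0}(2) then identifies
\[
\widetilde{U_X \times U_Y} \;=\; \widetilde{U_X} \underset{\mathcal{T}}{\times}\, \widetilde{U_Y} \;=\; \widetilde{U_X} \underset{\mathcal{C}^{0}}{\times}\, \widetilde{U_Y}.
\]

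Finally, cartesian closedness of $\mathcal{C}^{0}$ (Proposition \ref{convenC0}(1)) makes each $(-) \underset{\mathcal{C}^{0}}{\times} Z$ commute with colimits, and since $\widetilde{\cdot}$ preserves colimits we have $\widetilde{X} = \mathrm{colim}\,\widetilde{U_X}$ and $\widetilde{Y} = \mathrm{colim}\,\widetilde{U_Y}$ in $\mathcal{C}^{0}$. Combining,
\[
\widetilde{X \times Y} \;=\; \mathrm{colim}\,\bigl(\widetilde{U_X} \underset{\mathcal{C}^{0}}{\times} \widetilde{U_Y}\bigr) \;=\; \bigl(\mathrm{colim}\,\widetilde{U_X}\bigr) \underset{\mathcal{C}^{0}}{\times} \bigl(\mathrm{colim}\,\widetilde{U_Y}\bigr) \;=\; \widetilde{X} \underset{\mathcal{C}^{0}}{\times}\, \widetilde{Y}.
\]
Preservation of the terminal object is immediate. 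The main hurdle is purely bookkeeping — tracking which colimit each functor is permuted past and checking that the resulting isomorphism is the natural one induced by the projections; once the local identification $\widetilde{U_X \times U_Y} = \widetilde{U_X} \underset{\mathcal{C}^{0}}{\times} \widetilde{U_Y}$ (the only step where the concrete topology of Euclidean opens is used) is in hand, the rest is a mechanical chase through adjunctions and cartesian closedness.
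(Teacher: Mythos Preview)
Your argument is correct and takes a genuinely different route from the paper's. The paper simply cites \cite[Lemma 4.1]{CSW}; the argument there (also visible in a commented-out proof in the paper's source) proceeds directly: one checks that $id : \widetilde{A} \times_{\mathcal{C}^0} \widetilde{B} \to \widetilde{A \times B}$ is continuous by testing on products of smooth curves $c \times d : \mathbb{R} \times \mathbb{R} \to A \times B$, observing that $\widetilde{\mathbb{R} \times \mathbb{R}} = \mathbb{R} \times_{\mathcal{C}^0} \mathbb{R}$, and then repeatedly transposing through the exponential law in $\mathcal{C}^0$ while using arc-generatedness of $\widetilde{A}$ and $\widetilde{B}$ to pass from curves to the full spaces. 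Your approach instead exhibits both $\widetilde{X \times Y}$ and $\widetilde{X} \times_{\mathcal{C}^0} \widetilde{Y}$ as the same colimit in $\mathcal{C}^0$, using that every diffeological space is the colimit of its plot diagram, that $\widetilde{\cdot}$ is a left adjoint, and that products commute with colimits in cartesian closed categories on both the $\mathcal{D}$ and $\mathcal{C}^0$ sides. The curve-based argument is shorter and more hands-on; your colimit argument is more structural and makes explicit that the only non-formal input is the local identification $\widetilde{U_X \times U_Y} = \widetilde{U_X} \times_{\mathcal{C}^0} \widetilde{U_Y}$ for Euclidean open sets --- everything else is a mechanical interchange of adjoints and colimits.
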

\begin{proof}
	See \cite[Lemma 4.1]{CSW} and the comment after it.
\end{proof}
\if0
\begin{proof}
	Let $A$ and $B$ be diffeological spaces. Since $id:\widetilde{A \underset{\mathcal{D}}{\times} B}\longrightarrow \widetilde{A} \underset{\czero}{\times} \widetilde{B}$ is obviously continuous, we have only to show that $id:\widetilde{A} \underset{\czero}{\times} \widetilde{B} \longrightarrow \widetilde{A \underset{\mathcal{D}}{\times} B}$ is continuous.
	Set $P=A \underset{\mathcal{D}}{\times} B$. The map $c \times d:\rbb \underset{\mathcal{D}}{\times} \rbb \longrightarrow {P}$ is smooth for $c\in C_A$ and $d\in C_B$. 
	Since the underlying topology of $\rbb \underset{\dcal}{\times} \rbb \ (=\rbb^{2})$ coincides with the usual topology (cf. Lemma \ref{preDC8}(2)), we have $\widetilde{\rbb \underset{\dcal}{\times} \rbb} = \rbb \underset{\czero}{\times} \rbb$ by Proposition \ref{convenC0}.
	Thus, the map $c \times d:\rbb \underset{\czero}{\times} \rbb \longrightarrow \widetilde{{P}}$ is continuous for $c \in C_A$ and $d \in C_B$. By Proposition \ref{convenC0} and the arc-generatedness of $\widetilde{A}$ and $\widetilde{B}$, we have the implications
	\begin{eqnarray*}
		& & c \times d:\rbb \underset{\czero}{\times} \rbb \longrightarrow \widetilde{{P}} \text{ is continuous for $\forall c \in C_A$ and $\forall d \in C_B$}\\
		& \Rightarrow & \rbb \longrightarrow \czero(\rbb,\widetilde{{P}}) \text{ is continuous for $\forall c \in C_A$ and $\forall d \in C_B$}\\
		& \Rightarrow & \widetilde{A} \longrightarrow \czero(\rbb,\widetilde{{P}}) \text{ is continuous for $\forall d \in C_B$}\\
		& \Rightarrow & \rbb \longrightarrow \czero(\widetilde{A},\widetilde{{P}}) \text{ is continuous for $\forall d \in C_B$}\\
		& \Rightarrow & \widetilde{B} \longrightarrow \czero(\widetilde{A},\widetilde{{P}}) \text{ is continuous}\\
		& \Rightarrow & id:\widetilde{A} \underset{\czero}{\times} \widetilde{B} \longrightarrow \widetilde{{P}} \text{ is continuous,}\\
	\end{eqnarray*}
	where the symbols of several obvious maps are omitted. It completes the proof.
\end{proof}
\fi

\subsection{Homotopical notions in the category $\mathcal{D}$}
Endow $I=[0,1]$ with a diffeology via the canonical bijection with $\Delta^1$. A {\sl $\mathcal{D}$-homotopy} (or {\sl smooth homotopy}) $H$ between smooth maps $f,g  :X\longrightarrow Y$ is a smooth map $H :X\times I\longrightarrow Y$ such that $H(\cdot, 0)=f$ and $H(\cdot,1)=g$.
\par\indent
A diffeological subspace $A$ of a diffeological space $X$ is a {\sl $\mathcal{D}$-deformation retract} (or {\sl deformation retract in $\mathcal{D}$}) if there is a $\mathcal{D}$-homotopy $H  :X\times I\longrightarrow X$ such that $H(x,0)=x$, $H(a,t)=a$, and $H(x,1)\in A$ for all $x\in X$, $a\in A$, and $t\in I$. Such a $\mathcal{D}$-homotopy is called a {\sl $\mathcal{D}$-deformation} of $X$ onto $A$.
\par\indent
We say that $f$ is {\sl $\mathcal{D}$-homotopic} to $g$ and write $f \simeq_{\dcal} g$ (or $f\simeq g$ in $\mathcal{D}$) if there is a $\mathcal{D}$-homotopy between $f$ and $g$. The relation $\simeq_{\dcal}$ is an equivalence relation (see Remark \ref{Dhomotopy}) and the quotient set $\mathcal{D}(X,Y)/{\simeq}_{\dcal}$ is denoted by $[X,Y]_{\mathcal{D}}$.
\par\indent
We say that diffeological spaces $X$ and $Y$ have the same {\sl $\dcal$-homotopy type} and write $X \simeq_{\dcal} Y$ (or $X \simeq Y$ in $\dcal$) if there exist smooth maps $f  : X \longrightarrow Y$ and $g  : Y \longrightarrow X$ such that $g \circ f \simeq_{\dcal} 1_X$ and $f \circ g \simeq_{\dcal} 1_{Y}$. A diffeological space $X$ is called {\sl $\dcal$-contractible} (or {\sl contractible in $\dcal$}) if $X$ has the same {\sl $\dcal$-homotopy type} as the terminal object $\ast$.
\begin{rem}\label{Dhomotopy}
	Since $\Delta^{1} = \Delta^{1}_{\mathrm{sub}}$ (Definition \ref{simplices}), the diffeology of $I$ is just the sub-diffeology of $\rbb$. Using a cut-off function, we can easily see that the relation $\simeq_{\dcal}$ is an equivalence relation and that our notion of smooth homotopy is equivalent to that of ordinary smooth homotopy (\cite[p. 1276]{CW}, \cite[p. 108]{I}).
\end{rem}

Let $I_{\text{top}}$ denote the unit interval $I=[0, 1]$ endowed with the ordinary topology. Since $I_{\text{top}}$ is arc-generated (Lemma \ref{exC0}), $\czero$-homotopies and the $\czero$-homotopy relation $\simeq_{\czero}$ are defined as in the category $\mathcal{T}$ of topological spaces. By Proposition \ref{convenC0}(2), $\czero$-homotopical notions are just the restrictions of ordinary homotopical notions.
\par\indent 
We have the following fundamental result.
\begin{prop}\label{DC0homotopies}
	Let $f,g :X\longrightarrow Y$ be smooth maps between diffeological spaces. If $f$ is $\mathcal{D}$-homotopic to $g$, then $\widetilde{f}$ is $\czero$-homotopic to $\widetilde{g}$. In particular, the natural map
	$$[X,Y]_{\mathcal{D}}\longrightarrow[\widetilde{X},\widetilde{Y}]_{\czero}$$
	exists.
	\begin{proof}
		Since $\widetilde{I} = I_{\mathrm{top}}$ (Lemma \ref{underlyingconvex}), the result is immediate from Proposition \ref{finiteproducts}.
	\end{proof}
\end{prop}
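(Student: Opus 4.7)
The plan is to push the given $\dcal$-homotopy through the underlying topological space functor and identify the resulting space with the ordinary product. Concretely, given a $\dcal$-homotopy $H\colon X\times I\longrightarrow Y$ with $H(\cdot,0)=f$ and $H(\cdot,1)=g$, I apply $\widetilde{\cdot}\colon\dcal\longrightarrow\czero$ to obtain a continuous map $\widetilde{H}\colon\widetilde{X\times I}\longrightarrow\widetilde{Y}$.

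Next I identify the source. By Proposition \ref{finiteproducts}, the underlying topological space functor preserves finite products, so $\widetilde{X\times I}=\widetilde{X}\underset{\czero}{\times}\widetilde{I}$. By Lemma \ref{underlyingconvex}, $\widetilde{I}=I_{\mathrm{top}}$, which is locally compact. Hence by Proposition \ref{convenC0}(2), $\widetilde{X}\underset{\czero}{\times} I_{\mathrm{top}}=\widetilde{X}\underset{\tcal}{\times} I_{\mathrm{top}}$, so $\widetilde{H}$ is a continuous map $\widetilde{X}\times I_{\mathrm{top}}\longrightarrow\widetilde{Y}$ with $\widetilde{H}(\cdot,0)=\widetilde{f}$ and $\widetilde{H}(\cdot,1)=\widetilde{g}$. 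This is exactly a $\czero$-homotopy between $\widetilde{f}$ and $\widetilde{g}$.

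For the last assertion, since $\simeq_{\dcal}$ and $\simeq_{\czero}$ are equivalence relations (Remark \ref{Dhomotopy} and the standard fact for topological homotopy), the assignment $[f]_{\dcal}\longmapsto[\widetilde{f}\,]_{\czero}$ is well-defined, giving the desired natural map $[X,Y]_{\dcal}\longrightarrow[\widetilde{X},\widetilde{Y}]_{\czero}$.

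There is no real obstacle here: the only subtle point is ensuring the product is preserved when passing from $\dcal$ to $\czero$ (since in general colimits and products in $\czero$ differ from those in $\tcal$), but this is precisely the content of Proposition \ref{finiteproducts} together with the local compactness of $I_{\mathrm{top}}$. Once the identification $\widetilde{X\times I}=\widetilde{X}\times I_{\mathrm{top}}$ is in hand, the proof is immediate.
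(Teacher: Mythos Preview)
Your proof is correct and follows essentially the same approach as the paper: apply $\widetilde{\cdot}$ to the $\dcal$-homotopy, use Lemma \ref{underlyingconvex} to identify $\widetilde{I}=I_{\mathrm{top}}$, and use Proposition \ref{finiteproducts} to identify $\widetilde{X\times I}$ with $\widetilde{X}\underset{\czero}{\times}\widetilde{I}$. Your explicit invocation of Proposition \ref{convenC0}(2) to pass from the $\czero$-product to the $\tcal$-product is a detail the paper absorbs into its remark (just before the proposition) that $\czero$-homotopical notions are restrictions of ordinary ones.
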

\section{Verification of Axiom 1}
In this section, we verify Axiom 1.
\par\indent
For a category $\ccal$ having the (faithful) underlying set functor, we adopt the following notation throughout this paper:
\begin{itemize}
	\item[(1)] For $X\in \ccal$, $1_X$ (or $1$) denotes the identity morphism of $X$.
	\item[(2)] For $X_1$, $X_2 \in \ccal$ having the same underlying set, $id : X_1 \longrightarrow X_2$ is the set-theoretic identity map or the morphism whose underlying set-theoretic map is the identity map.
\end{itemize}
\par\indent
Let us begin by proving the following lemma.
\begin{lem}\label{firstproperty}
	The identity map $id : \Delta^p\longrightarrow \Delta^p_{\mathrm{sub}}$ is smooth.
	\begin{proof}
		For $p\leq 1$, the result is obvious. Suppose that the result holds up to $p-1$. Consider the commutative diagram
		\begin{equation*}
		\begin{tikzcd}
		\Delta^{p-1}\times [0,1) \arrow{r}{id\times 1} \arrow{d}{\varphi_i} & \Delta^{p-1}_{\text{sub}}\times [0,1) \arrow{d}{\varphi_i}\\
		\Delta^p\arrow{r}{id} & \Delta^{p}_{\mathrm{\text{sub}}},
		\end{tikzcd}
		\end{equation*}
		and note that $\varphi_i\circ(id\times 1)$ is smooth by the induction hypothesis. Then, $id : \Delta^p\longrightarrow \Delta^p_{\text{sub}}$ is smooth by Definition \ref{simplices}.
	\end{proof}
\end{lem}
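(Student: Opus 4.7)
The plan is to induct on $p$, leveraging the inductive definition of the diffeology on $\Delta^p$ given in Definition \ref{simplices}. For $p \leq 1$, by definition $\Delta^p = \Delta^p_{\mathrm{sub}}$, so the identity map is trivially smooth and establishes the base case.

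For the inductive step at $p \geq 2$, assume $id : \Delta^{p-1} \longrightarrow \Delta^{p-1}_{\mathrm{sub}}$ is smooth. Since $\Delta^p$ carries the final structure for the family $\{\varphi_0, \ldots, \varphi_p\}$, the universal property of final structures reduces the problem to verifying that each composite
\[
id \circ \varphi_i : \Delta^{p-1} \times [0,1) \longrightarrow \Delta^p_{\mathrm{sub}}
\]
is smooth, for $i = 0, \ldots, p$.

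I would handle each such composite by factoring it as $\varphi_i \circ (id \times 1)$, where the first factor $id \times 1 : \Delta^{p-1} \times [0,1) \longrightarrow \Delta^{p-1}_{\mathrm{sub}} \times [0,1)$ is smooth by the inductive hypothesis together with functoriality of the product in $\dcal$, and the second factor $\varphi_i : \Delta^{p-1}_{\mathrm{sub}} \times [0,1) \longrightarrow \Delta^p_{\mathrm{sub}}$ is smooth because its defining formula $(x,t) \mapsto (1-t)(i) + t\,d^i(x)$ is polynomial in the ambient coordinates of $\rbb^{p+1}$ and hence restricts to a smooth map between the sub-diffeological subspaces of the ambient affine spaces. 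Composing the two smooth maps yields the smoothness of $id \circ \varphi_i$, and the conclusion follows from the universal property.

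I do not anticipate a real obstacle; the argument is essentially a diagram chase coupled with a one-line verification that the affine formula for $\varphi_i$ is Euclidean-smooth. The only mild subtlety is to be sure that the final structure on $\Delta^p$ is being applied in the right direction: it gives smoothness \emph{out of} $\Delta^p$ as soon as all precompositions with the $\varphi_i$ are smooth, which is exactly what the factorization provides.
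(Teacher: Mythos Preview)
Your proposal is correct and follows essentially the same approach as the paper: both induct on $p$, invoke the final structure from Definition~\ref{simplices}, and verify smoothness of $id \circ \varphi_i$ via the factorization $\varphi_i \circ (id \times 1)$ using the inductive hypothesis. Your version is slightly more explicit in justifying why $\varphi_i : \Delta^{p-1}_{\mathrm{sub}} \times [0,1) \to \Delta^p_{\mathrm{sub}}$ is smooth, but the argument is otherwise identical.
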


\begin{prop}[\bf Axiom 1]\label{Axiom 1}
	The underlying space of $\Delta^p$ is the topological standard $p$-simplex for $p\geq 0$.
	\begin{proof}
		The topological standard $p$-simplex is denoted by $\Delta^p_{\text{top}}$.
		\par\indent
		For $p=0$, the result is obvious.
		Suppose that $\widetilde{\Delta^{p-1}}=\Delta^{p-1}_{\text{top}}$. For a sufficiently small positive number $\epsilon$, consider the composite of smooth surjections
		$$
		\underset{i}{\textstyle \coprod}\ \Delta^{p-1}\times[0,1-\epsilon] \xrightarrow[ ]{\ \sum\varphi_i\ } \Delta^p \xrightarrow[ ]{\ id\ } \Delta^p_{\text{sub}}
		$$
		(see Lemma \ref{firstproperty}). Applying the functor $\widetilde{\cdot}$, we have the composite of continuous surjections
		$$
		\left(\underset{i}{\textstyle \coprod}\ \Delta^{p-1}\times[0,1-\epsilon]\right)^{\widetilde{}} \xrightarrow[ ]{\ \widetilde{\sum\varphi_i}\ } \widetilde{\Delta^p} \xrightarrow[ ]{\ id\ } \widetilde{\Delta^p_{\text{sub}}}.
		$$
		Observe that
		\begin{eqnarray*}
			\left(\underset{i}{\textstyle \coprod}\,\Delta^{p-1}\times[0,1-\epsilon]\right)^{\widetilde{}} 
			&=&\underset{i}{\textstyle \coprod}\,\Delta_{\text{top}}^{p-1}\underset{\mathcal{T}}{\times}[0,1-\epsilon]
		\end{eqnarray*}
		from Propositions \ref{adjointDC0} and \ref{finiteproducts}, the induction hypothesis, Lemma \ref{underlyingconvex}, and Proposition \ref{convenC0}(2). Then, $\left(\underset{i}{\coprod}\,\,\Delta^{p-1}\times[0,1-\epsilon]\right)^{\widetilde{}}$, and hence, $\widetilde{\Delta^{p}}$ is compact. Since the equality $\widetilde{\Delta^{p}_{\text{sub}}} = \Delta^{p}_{\mathrm{\text{top}}}$ holds (Lemma \ref{underlyingconvex}), $id : \widetilde{\Delta^{p}} \longrightarrow \widetilde{\Delta^{p}_{\text{sub}}} = \Delta^{p}_{\mathrm{\text{top}}}$ is a homeomorphism.
	\end{proof}\end{prop}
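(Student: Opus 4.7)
The plan is to induct on $p$. The base cases $p \leq 1$ are immediate because $\Delta^p = \Delta^p_{\mathrm{sub}}$ by definition, and Lemma \ref{underlyingconvex} gives $\widetilde{\Delta^p_{\mathrm{sub}}} = \Delta^p_{\mathrm{top}}$. The content is therefore concentrated in the inductive step from $p-1$ to $p$.

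For the inductive step, Lemmas \ref{firstproperty} and \ref{underlyingconvex} together supply a continuous bijection $id : \widetilde{\Delta^p} \longrightarrow \Delta^p_{\mathrm{top}}$. Since $\Delta^p_{\mathrm{top}}$ is Hausdorff, it suffices to show that $\widetilde{\Delta^p}$ is compact. To produce a compact space surjecting onto $\widetilde{\Delta^p}$, I would exploit the defining final structure on $\Delta^p$: I would find an $\epsilon > 0$ such that the images $\varphi_i(\Delta^{p-1} \times [0, 1-\epsilon])$ already cover $\Delta^p$, so that the restricted map $\coprod_i \Delta^{p-1} \times [0, 1-\epsilon] \longrightarrow \Delta^p$ is a (smooth) surjection. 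Since the $i$-th coordinate of $\varphi_i(x, t)$ equals $1-t$ (using $d^i(x)_i = 0$) and every point of $\Delta^p$ has at least one coordinate of size $\geq 1/(p+1)$, the choice $\epsilon = 1/(p+1)$ suffices.

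Applying $\widetilde{\cdot}$ to this surjection and using Propositions \ref{adjointDC0} and \ref{finiteproducts}, the induction hypothesis, and Proposition \ref{convenC0}(2) (valid because $[0, 1-\epsilon]$ is locally compact), identifies the source with $\coprod_i \Delta^{p-1}_{\mathrm{top}} \times_{\tcal} [0, 1-\epsilon]$, which is compact. Hence $\widetilde{\Delta^p}$ is a continuous image of a compact space, so it is compact, and the continuous bijection onto the Hausdorff space $\Delta^p_{\mathrm{top}}$ becomes a homeomorphism. The main obstacle is the geometric check that finitely many truncated strips exhaust $\Delta^p$; each $\varphi_i$ by itself misses the face opposite the vertex $(i)$, so some quantitative argument is required to pin down $\epsilon$. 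Once this is done, everything else is a direct application of the preservation properties of $\widetilde{\cdot}$ already established in Section 2.
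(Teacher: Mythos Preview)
Your proposal is correct and follows essentially the same route as the paper's proof: both establish the continuous bijection $id:\widetilde{\Delta^p}\to\widetilde{\Delta^p_{\mathrm{sub}}}=\Delta^p_{\mathrm{top}}$ via Lemmas \ref{firstproperty} and \ref{underlyingconvex}, then prove compactness of $\widetilde{\Delta^p}$ by surjecting from $\coprod_i \Delta^{p-1}\times[0,1-\epsilon]$ and identifying the underlying space of the source using Propositions \ref{adjointDC0}, \ref{finiteproducts}, \ref{convenC0}(2) and the induction hypothesis. The only difference is that you make explicit the choice $\epsilon=1/(p+1)$ and the Hausdorff hypothesis, whereas the paper simply writes ``sufficiently small $\epsilon$''.
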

	\section{Local diffeological structure of the standard $p$-simplex $\Delta^p$}
	In this section, we study the diffeology of $\Delta^p$ and introduce the notion of a good neighborhood of an open simplex of $\Delta^p$, which is used to verify Axioms 2-4.
	\subsection{Comparison of the diffeologies of $\Delta^p$ and $\Delta^{p}_{\mathrm{sub}}$}
	We have seen that $id: \Delta^p \longrightarrow \Delta^{p}_{\mathrm{sub}}$ is smooth (Lemma \ref{firstproperty}). In this subsection, we further compare the diffeologies of $\Delta^p$ and $\Delta^{p}_{\mathrm{sub}}$. First, we introduce various important subsets of $\Delta^p$; the $k^{th}$ horn $\Lambda^p_k$ has already been introduced in Section 1.2.
	\begin{defn}\label{subsets}
		The {\sl interior} $\mathring{\Delta}^p$ and the {\sl boundary} $\dot{\Delta}^p$ of $\Delta^p$ are defined by 
		\begin{center}
			$\mathring{\Delta}^p = \{(x_0, \ldots, x_p) \in \Delta^p \ | \ x_i > 0 \ \ \text{for} \ \text{any} \ i \ \},$\\
			$\dot{\Delta}^p = \{(x_0,\ldots,x_p)\in\Delta^p\ |\ x_i=0 \hbox{ for some }i \}$.
		\end{center}
		For $0\leq i_1 < \cdots < i_k \leq p$, the {\sl closed $k$-simplex} $\langle i_0,\ldots,i_k \rangle $ and the {\sl open $k$-simplex} $(i_0,\ldots,i_k)$ are defined by
		\begin{eqnarray*}
			\langle i_0,\ldots, i_k \rangle & = & \{(x_0, \ldots, x_p) \in \Delta^p \ \mathrm{|} \ x_i=0 \ \ \text{for} \ \ i \ \notin \{i_0, \ldots, i_k \} \},\\
			(i_0,\ldots, i_k) & = & \{(x_0, \ldots, x_p) \in \Delta^p \ \mathrm{|} \ x_i=0 \ \ \text{if \ and \ only \ if} \ \ i \notin \{i_0, \ldots, i_k \} \}.
		\end{eqnarray*}
		Especially, $\langle 0, \ldots, \hat{i}, \ldots, \ p \rangle$ and $(0, \ldots, \hat{i}, \ldots, \ p)$ are often denoted by $\Delta^{p-1}_{(i)}$ and $\mathring{\Delta}^{p-1}_{(i)}$, respectively. The boundary $\dot{\Delta}^{p-1}_{(i)}$ of $\Delta^{p-1}_{(i)}$ is defined to be the subset $\Delta^{p-1}_{(i)}\backslash\mathring{\Delta}^{p-1}_{(i)}$.
		
		The subset $\Delta^p_{\hat{i}}  :=\Delta^p\backslash \Delta^{p-1}_{(i)}$ is called the {\sl $i^{th}$ half-open simplex}. The subset $\mathrm{sk}_l \ \Delta^p  :=\underset{k\leq l}{\bigcup}(i_0,\ldots,i_k)$ is called the {\sl $l$-skeleton} of $\Delta^p$.
			\end{defn}
		Without explicit mention, a subsets of $\Delta^p$ are endowed with the sub-diffeology of $\Delta^p$; a subset $A$ of $\Delta^p$ endowed with the sub-diffeology of $\Delta^p_{\mathrm{sub}}$ is denoted by $A_{\mathrm{sub}}$, which is compatible with the notation introduced in Section 1.2.

	\begin{lem}\label{secondproperty}		
		$\Delta^p-\mathrm{sk}_{p-2}\ \Delta^p=(\Delta^p-\mathrm{sk}_{p-2}\ \Delta^p)_{\mathrm{sub}}$.
		\begin{proof}
			For $p\leq 1$, the result is obvious.
			Suppose that the result holds up to $p-1$, and consider the following diagram:
			\begin{equation*}
			\begin{tikzcd}
			\underset{0\leq i\leq p}{\coprod}(\Delta^{p-1}-\mathrm{sk}_{p-3}\ \Delta^{p-1})\times (0,1) \arrow[hookrightarrow]{r} \arrow{d}[swap]{{\sum\varphi_i|_{\Delta^{p}-\mathrm{sk}_{p-2}\ \Delta^{p}}}}& \underset{0\leq i\leq p}{\coprod}\Delta^{p-1}\times [0,1) \arrow{d}{\sum\varphi_i}\\
			\Delta^{p}-\mathrm{sk}_{p-2}\ \Delta^{p}\arrow[hookrightarrow]{r}&\Delta^{p}.
			\end{tikzcd}
			\end{equation*}
			This diagram is a pullback diagram in $Set$. If $A \longhookrightarrow X$ and $B \longhookrightarrow Y$ are $\dcal$-embeddings, then $A \times B \longhookrightarrow X \times Y$ is also a $\dcal$-embedding. Thus, the upper horizontal arrow of the above diagram is a $\dcal$-embedding, and hence, the above diagram is a pullback diagram in $\dcal$ (Lemma \ref{initial}).
			Since $\sum\varphi_i$ is a $\dcal$-quotient map, $\sum\varphi_i|_{\Delta^{p}-\mathrm{sk}_{p-2}\ \Delta^{p}}$ is also a $\dcal$-quotient map (Lemma \ref{initialfinal}(2)). Next, consider the commutative diagram in $\dcal$:
			\begin{equation*}
			\begin{tikzcd}
			\underset{0\leq i\leq p}{\coprod}(\Delta^{p-1}-\mathrm{sk}_{p-3}\ \Delta^{p-1})\times (0,1) \arrow{r}{id} \arrow{d}[swap]{{\sum\varphi_i|_{\Delta^{p}-\mathrm{sk}_{p-2}\ \Delta^{p}}}}& \underset{0\leq i\leq p}{\coprod}(\Delta^{p-1}-\mathrm{sk}_{p-3}\Delta^{p-1})_{\mathrm{sub}} \times (0,1) \arrow{d}{\sum\varphi_i|_{\Delta^{p}-\mathrm{sk}_{p-2}\ \Delta^{p}}}\\
			\Delta^{p}-\mathrm{sk}_{p-2}\ \Delta^{p}\arrow{r}{id}&(\Delta^{p}-\mathrm{sk}_{p-2}\Delta^p)_{\mathrm{sub}}
			\end{tikzcd}
			\end{equation*}
			(see Lemma \ref{firstproperty}). We have shown that the left vertical arrow is a $\dcal$-quotient map. Since $\varphi_{i} \ |_{\Delta^{p}-\mathrm{sk}_{p-2} \ \Delta^{p}}  :(\Delta^{p-1}-\mathrm{sk}_{p-3}\Delta^{p-1})_{\mathrm{sub}} \times (0, 1) \longrightarrow (\Delta^{p}-\mathrm{sk}_{p-2}\Delta^{p})_{\mathrm{sub}}$ is obviously an open $\dcal$-embedding for any $i$, the right vertical arrow is also a $\dcal$-quotient map. Since the upper horizontal arrow is a diffeomorphism by the induction hypothesis, the lower horizontal arrow is also a diffeomorphism.
		\end{proof}
	\end{lem}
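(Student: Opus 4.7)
The plan is to induct on $p$; the base case $p \leq 1$ is immediate since $\Delta^{p} = \Delta^{p}_{\mathrm{sub}}$ by Definition \ref{simplices}. For the inductive step, the strategy is to exhibit both $\Delta^{p} - \mathrm{sk}_{p-2}\,\Delta^{p}$ and $(\Delta^{p} - \mathrm{sk}_{p-2}\,\Delta^{p})_{\mathrm{sub}}$ as $\dcal$-quotients of the same source space by suitably restricting the defining maps $\varphi_{i}$, and then invoke the induction hypothesis to identify the two sources.

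First I would compute the preimage $(\sum \varphi_{i})^{-1}(\Delta^{p} - \mathrm{sk}_{p-2}\,\Delta^{p})$. Since $\varphi_{i}(x,t)$ has $i$-th coordinate $1-t$ and remaining coordinates $tx_{k}$, the image $\varphi_{i}(x,t)$ has at most one zero coordinate precisely when $t \in (0,1)$ (to avoid the vertex $(i)$) and $x \in \Delta^{p-1} - \mathrm{sk}_{p-3}\,\Delta^{p-1}$ (to avoid producing further zeros). Because $\sum \varphi_{i}$ is a $\dcal$-quotient map by Definition \ref{simplices} and the inclusion $\Delta^{p} - \mathrm{sk}_{p-2}\,\Delta^{p} \longhookrightarrow \Delta^{p}$ is a $\dcal$-embedding, pulling back and applying Lemmas \ref{initial} and \ref{initialfinal}(2) yields a $\dcal$-quotient map
$$
\coprod_{i} (\Delta^{p-1} - \mathrm{sk}_{p-3}\,\Delta^{p-1}) \times (0,1) \longrightarrow \Delta^{p} - \mathrm{sk}_{p-2}\,\Delta^{p}.
$$

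On the sub-diffeology side, each $\varphi_{i}$ restricts to a diffeomorphism from $(\Delta^{p-1} - \mathrm{sk}_{p-3}\,\Delta^{p-1})_{\mathrm{sub}} \times (0,1)$ onto the open subspace $(\Delta^{p} - \mathrm{sk}_{p-2}\,\Delta^{p})_{\mathrm{sub}} \cap \Delta^{p}_{\hat{i}}$, with explicit smooth inverse given by $t = 1 - y_{i}$ and $x_{k} = y_{k}/t$ (appropriately reindexed). Since $\Delta^{p}_{\hat{i}}$ is open in $\Delta^{p}_{\mathrm{sub}}$ (its complement $\Delta^{p-1}_{(i)}$ being cut out by $y_{i} = 0$) and the $\Delta^{p}_{\hat{i}}$ cover $\Delta^{p}$, the combined map is a $\dcal$-quotient onto $(\Delta^{p} - \mathrm{sk}_{p-2}\,\Delta^{p})_{\mathrm{sub}}$. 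The induction hypothesis then identifies the two source diffeologies, forcing the two $\dcal$-quotient structures on the common target to coincide.

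The main obstacle I expect is the geometric bookkeeping: pinning down the preimage in the first step and verifying that $\{(\Delta^{p} - \mathrm{sk}_{p-2}\,\Delta^{p})_{\mathrm{sub}} \cap \Delta^{p}_{\hat{i}}\}_{i}$ is a genuine open cover capable of supporting the quotient-map argument in the sub case. Once that local picture is in place, Lemmas \ref{initial} and \ref{initialfinal}(2) handle the categorical passage automatically and the induction closes.
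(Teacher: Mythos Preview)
Your proposal is correct and follows essentially the same route as the paper: induction on $p$, identification of the preimage $(\sum\varphi_i)^{-1}(\Delta^p-\mathrm{sk}_{p-2}\,\Delta^p)$ as $\coprod_i(\Delta^{p-1}-\mathrm{sk}_{p-3}\,\Delta^{p-1})\times(0,1)$, application of Lemmas~\ref{initial} and~\ref{initialfinal}(2) to obtain a $\dcal$-quotient map onto $\Delta^p-\mathrm{sk}_{p-2}\,\Delta^p$, a parallel argument that each $\varphi_i$ restricts to an open $\dcal$-embedding into $(\Delta^p-\mathrm{sk}_{p-2}\,\Delta^p)_{\mathrm{sub}}$ so the coproduct is also a $\dcal$-quotient map there, and finally the induction hypothesis to identify the two sources. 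The paper states the open-embedding claim without your explicit inverse, and is slightly more careful about why the pullback in $Set$ is already a pullback in $\dcal$ (it notes that products of $\dcal$-embeddings are $\dcal$-embeddings, so the upper horizontal arrow is one, which is the missing hypothesis in the ``if'' direction of Lemma~\ref{initial}); you may want to make that point explicit.
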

	The diffeologies of $\Delta^{p}$ and $\Delta^{p}_{\mathrm{sub}}$ actually differ along $\text{sk}_{p-2} \ \Delta^{p}$ (see Remark \ref{skeleton}).
	\subsection{Good neighborhood of an open simplex of $\Delta^p$}
	In view of Lemma \ref{secondproperty}, it is important to understand the diffeology of a neighborhood of each open simplex of $\Delta^p$ (especially of codimension $\geq 2$). This is the objective in this subsection.
	\par\indent 
	Let $I=\{i_0,\ldots,i_k\}$ be a subset of the ordered set $\{0, \ldots, \ p\}$; we always assume that the elements $i_0, \ldots,i_k$ are arranged such that $i_0 < \cdots <i_k$. Then, $\Delta^I$ is the closed $k$-simplex $\langle i_0,\ldots,i_k \rangle$ endowed with the diffeology that makes the affine bijection $\phi_I  :\Delta^k\longrightarrow \langle i_0,\ldots,i_k \rangle$ defined by $\phi_I((j))=(i_j)$ a diffeomorphism. The $i^{th}$ half-open simplex $\Delta^{I}_{\hat{i}}$ $(i\in I)$, the interior $\mathring{\Delta}^I$, and the boundary $\dot{\Delta}^I$ of $\Delta^I$ are defined in the obvious manner (cf. Definition \ref{subsets}). For $l\in I$,
	$$\varphi_l:\Delta^{I-\{l \}} \times[0,1)\longrightarrow\Delta^{I}$$
	is defined by $\varphi_l(x,t)=(1-t)(l)+tx.$ The map $\varphi_i : \Delta^{p-1} \times [0, 1) \longrightarrow \Delta^p$ in Definition \ref{simplices} is identified with $\varphi_i  : \Delta^{\{0, \ldots, \hat{i}, \ldots, p \}} \times [0, 1) \longrightarrow \Delta^p$ in the obvious manner. 
	\begin{lem}\label{halfopen}
		For $p > 0$ and $0 \leq j \leq p$, $\varphi_j :\Delta^{p-1}\times[0,1)\longrightarrow \Delta^{p}_{\hat{j}}$ is a $\dcal$-quotient map.
		\begin{proof}
			For $p=1$, the result is obvious.
			Suppose that the result holds up to $p-1$, and consider the following diagram: 
			\begin{equation*}
			\begin{tikzcd}
			\underset{i \neq j}{\coprod}\Delta^{\{0, \ldots, \hat{i}, \ldots, p\}}_{\hat{j}} \times (0, 1)\coprod\Delta^{\{0, \ldots, \hat{j}, \ldots, p\}} \times [0, 1) \arrow[hookrightarrow]{r} \arrow{d}[swap]{\underset{i \neq j}{\sum}\varphi_i + \varphi_j} & \underset{i}{\coprod}\ \Delta^{\{0, \ldots, \hat{i}, \ldots, p\}}\times [0,1) \arrow{d}{\sum\varphi_i}\\
			\Delta^{p}_{\hat{j}}\arrow[hookrightarrow]{r}&\Delta^{p}.
			\end{tikzcd}
			\end{equation*}
			We can show that this diagram is a pullback diagram in $\dcal$ by an argument similar to that in the proof of Lemma \ref{secondproperty}.
			Since $\underset{i}{\sum}\varphi_{i}$ is a $\dcal$-quotient map (Definition \ref{simplices}), $\underset{i \neq j}{\sum} \varphi_i + \varphi_j$
			is also a $\dcal$-quotient map (Lemma \ref{initialfinal}(2)). Thus, from the construction of a final structure (\cite[p. 90]{CSW}), we need to only show that for any $i$ \ $(\neq \ j),$ the map $\phi$ defined by the commutative diagram 
			\begin{equation*}
			\begin{tikzcd}
			\Delta^{\{0,\ldots,\hat{i},\ldots,p\}}_{\hat{j}}\times (0,1) \arrow{rr}{\phi} \arrow[swap]{dr}{\varphi_i} & &\Delta^{\{0,\ldots,\hat{j},\ldots,p\}}_{\hat{i}}\times (0,1)\arrow{dl}{\varphi_j}\\
			& \Delta^{p}_{\hat{j}} & 
			\end{tikzcd}
			\end{equation*}
			is smooth. 
			We easily see that
			$$
			\phi((1-\tau)(j)+\tau y,t)=\left( \frac{1-t}{1-t(1-\tau)}(i)+\frac{t\tau}{1-t(1-\tau)}y,1-t(1-\tau) \right).
			$$
			Thus we have the commutative diagram
			\begin{center}
				\begin{tikzcd}
					\Delta^{\{0,\ldots,\hat{i},\ldots,\hat{j},\ldots,p\}}\times [0,1)\times (0,1) \arrow{r}{\phi'} \arrow[swap]{d}{\varphi_j\times 1} & \Delta^{\{0,\ldots,\hat{i},\ldots,\hat{j},\ldots,p\}}\times [0,1)\times (0,1)\arrow{d}{\varphi_i\times 1}\\
					\Delta^{\{0,\ldots,\hat{i},\ldots,p\}}_{\hat{j}}\times (0,1)\arrow{r}{\phi} & \Delta^{\{0,\ldots,\hat{j},\ldots,p\}}_{\hat{i}}\times (0,1),
				\end{tikzcd}
			\end{center}
			where $\phi'$ is defined by
			$$
			\phi'(y,\tau,t)=\left(y,\frac{t\tau}{1-t(1-\tau)}, 1-t(1-\tau) \right).
			$$
			Note that $(\varphi_i\times 1)\circ \phi'$ is smooth, and that $\varphi_j\times 1$ is final by the induction hypothesis and Lemma \ref{times X}. Thus, $\phi$ is smooth.
		\end{proof}
	\end{lem}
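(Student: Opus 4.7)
The plan is to induct on $p$. The case $p = 1$ is immediate: $\Delta^1$ carries the sub-diffeology of $\abb^1$, so $\Delta^1_{\hat j}$ also carries a sub-diffeology, and $\varphi_j : \Delta^0 \times [0, 1) \to \Delta^1_{\hat j}$ is an affine bijection with affine inverse, hence a diffeomorphism and in particular a $\dcal$-quotient map.

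For the inductive step I would exploit the $\dcal$-quotient map $\sum_i \varphi_i : \coprod_i \Delta^{p-1} \times [0, 1) \to \Delta^p$ of Definition \ref{simplices}. A direct preimage computation identifies its pullback along the $\dcal$-embedding $\Delta^p_{\hat j} \hookrightarrow \Delta^p$ as
\[
\coprod_{i \neq j} \Delta^{\{0,\ldots,\hat i,\ldots,p\}}_{\hat j} \times (0, 1) \;\sqcup\; \Delta^{\{0,\ldots,\hat j,\ldots,p\}} \times [0, 1).
\]
The pullback-square argument used in the proof of Lemma \ref{secondproperty}, combined with Lemma \ref{initialfinal}(2), then yields that the restriction $\sum_{i \neq j} \varphi_i + \varphi_j$ is a $\dcal$-quotient map onto $\Delta^p_{\hat j}$. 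To conclude that $\varphi_j$ on its own is a $\dcal$-quotient map, I would appeal to the characterization of final structures: it suffices to construct, for each $i \neq j$, a smooth map $\phi : \Delta^{\{0,\ldots,\hat i,\ldots,p\}}_{\hat j} \times (0, 1) \to \Delta^{\{0,\ldots,\hat j,\ldots,p\}} \times [0, 1)$ with $\varphi_j \circ \phi = \varphi_i$ on this domain.

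Such a $\phi$ is forced by solving $\varphi_j(y', s) = \varphi_i(x, t)$: decomposing $x = (1 - \tau)(j) + \tau y$ with $y$ on the doubly-hatted face yields the explicit formulas $s = 1 - t(1 - \tau)$ and $y' = [(1-t)(i) + t \tau y]/s$, whose denominator is positive since $t < 1$. The main obstacle is verifying smoothness of $\phi$: although $(s, y')$ have evident smooth expressions in the ambient affine coordinates of $x$, we need $\phi$ to be smooth with respect to the \emph{intrinsic} (inductively defined) diffeology of $\Delta^{\{0,\ldots,\hat j,\ldots,p\}}$, which generally differs from the sub-diffeology. The remedy is to pass upstairs: by the inductive hypothesis and Lemma \ref{times X}, the map
\[
\varphi_j \times 1 : \Delta^{\{0,\ldots,\hat i,\ldots,\hat j,\ldots,p\}} \times [0, 1) \times (0, 1) \longrightarrow \Delta^{\{0,\ldots,\hat i,\ldots,p\}}_{\hat j} \times (0, 1)
\]
is itself a $\dcal$-quotient map, so smoothness of $\phi$ reduces to smoothness of the composite $\phi \circ (\varphi_j \times 1)$, which is given by an elementary rational formula in $(y, \tau, t)$ with no singularities on the domain, closing the induction.
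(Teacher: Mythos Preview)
Your proof is correct and follows the paper's argument step for step: the same pullback square over $\Delta^p_{\hat j}$, the same appeal to Lemma~\ref{initialfinal}(2), the same reduction to smoothness of the transition map $\phi$, and the same use of the inductive hypothesis with Lemma~\ref{times X} to make $\varphi_j \times 1$ final. One small sharpening of your final clause: ``elementary rational formula'' is not quite enough on its own, since the target $\Delta^{\{0,\ldots,\hat j,\ldots,p\}}$ carries the intrinsic diffeology rather than the sub-diffeology; what actually makes $\phi \circ (\varphi_j \times 1)$ smooth is that your explicit formula exhibits it as $(\varphi_i \times 1)\circ \phi'$ with $\phi'(y,\tau,t) = \bigl(y,\, t\tau/s,\, s\bigr)$ and $s = 1 - t(1-\tau)$, and $\varphi_i$ is smooth by construction---this is exactly the factorization the paper writes down.
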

	We also use the following lemma.
	\begin{lem}\label{permutation}
		Any permutation $\sigma$ of the vertices of $\Delta^p$ extends to a smooth affine map from $\Delta^p$ to $\Delta^p$.
		\begin{proof}
			Since $\underset{i}{\sum}\ \varphi_i  :\underset{i}{\coprod}\ \Delta^{p-1}\times[0,1)\longrightarrow\Delta^{p}$ is final (Definition \ref{simplices}), we can prove the result by induction.                        
		\end{proof}
	\end{lem}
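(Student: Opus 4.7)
The plan is to proceed by induction on $p$. For $p \le 1$, the standard $p$-simplex is $\Delta^p_{\mathrm{sub}}$, so $\Delta^p$ carries the sub-diffeology of $\mathbb{R}^{p+1}$ and any affine map $\Delta^p \longrightarrow \Delta^p$ is smooth since the ambient affine endomorphism of $\mathbb{R}^{p+1}$ is smooth. (Note that we cannot invoke Axiom 2 here, since Axiom 2 is only verified in Section 5 after this lemma.)

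For the inductive step, assume the assertion holds for $\Delta^{p-1}$. Let $\sigma$ be a permutation of $\{0,\ldots,p\}$ and let $\bar\sigma:\Delta^p \longrightarrow \Delta^p$ denote its affine extension. By Definition \ref{simplices}, the map $\sum_i \varphi_i : \coprod_i \Delta^{p-1}\times[0,1) \longrightarrow \Delta^p$ is a $\mathcal{D}$-quotient map, so it suffices to show that $\bar\sigma\circ\varphi_j$ is smooth for each $0\le j\le p$.

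The key computation is to factor $\bar\sigma\circ\varphi_j$ through $\varphi_{\sigma(j)}$ and a permutation of $\Delta^{p-1}$. On vertices, $\bar\sigma\circ d^j$ sends $(k)$ to $(\sigma(k))$ for $k<j$ and to $(\sigma(k+1))$ for $k\ge j$, and its image is the face $\langle 0,\ldots,\widehat{\sigma(j)},\ldots,p\rangle = \Delta^{p-1}_{(\sigma(j))}$. Thus $\tau_j := (d^{\sigma(j)})^{-1}\circ (\bar\sigma\circ d^j)$ is a well-defined permutation of the vertex set $\{0,\ldots,p-1\}$ of $\Delta^{p-1}$, and taking affine extensions gives
\[
\bar\sigma\circ d^j \;=\; d^{\sigma(j)}\circ\bar\tau_j.
\]
Since $\varphi_j(x,t)=(1-t)(j)+t\,d^j(x)$ and $\bar\sigma$ is affine with $\bar\sigma((j))=(\sigma(j))$, this yields
\[
\bar\sigma(\varphi_j(x,t)) \;=\; (1-t)(\sigma(j)) + t\,d^{\sigma(j)}(\bar\tau_j(x)) \;=\; \varphi_{\sigma(j)}\bigl(\bar\tau_j(x),\,t\bigr),
\]
i.e.\ $\bar\sigma\circ\varphi_j = \varphi_{\sigma(j)}\circ(\bar\tau_j\times 1_{[0,1)})$. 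By the induction hypothesis, $\bar\tau_j:\Delta^{p-1}\longrightarrow\Delta^{p-1}$ is smooth; and $\varphi_{\sigma(j)}$ is smooth since $\Delta^p$ carries the final structure for the $\varphi_i$. Hence $\bar\sigma\circ\varphi_j$ is smooth, and finality of $\sum_i\varphi_i$ concludes the induction.

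The only real obstacle is the bookkeeping behind the intertwining identity $\bar\sigma\circ d^j = d^{\sigma(j)}\circ\bar\tau_j$: one has to check that the prescription for $\tau_j$ on vertices indeed defines a permutation of $\{0,\ldots,p-1\}$ (which follows from $\sigma$ being a bijection between $\{0,\ldots,\hat j,\ldots,p\}$ and $\{0,\ldots,\widehat{\sigma(j)},\ldots,p\}$) and that the two affine extensions agree — this is automatic because both sides are affine and agree on the vertices of $\Delta^{p-1}$. Everything else is a direct application of the inductive hypothesis and of the finality built into Definition \ref{simplices}.
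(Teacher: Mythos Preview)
Your proof is correct and follows exactly the approach the paper indicates: induction on $p$ using the finality of $\sum_i \varphi_i$, together with the intertwining identity $\bar\sigma\circ\varphi_j = \varphi_{\sigma(j)}\circ(\bar\tau_j\times 1)$ to reduce to the inductive hypothesis. You have simply unpacked the one-line proof the paper gives, including the careful observation that Axiom~2 is not yet available at this point.
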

	The following proposition describes the local diffeological structure of $\Delta^p$.
	\begin{prop}\label{goodnbd}
		Let $I = \{i_0, \ldots, i_k \}$ be a subset of the ordered set $\{0,\ldots, p \}$ with $0 \leq k < p$, and define the open neighborhood $U_I$ of the open simplex $(i_0, \ldots, i_k)$ of $\Delta^p$ by
		$$
		U_{I} = \{(x_{0}, \ldots, x_{p}) \in \Delta^p \ | \ x_{i} > 0 \ \text{for} \ i \in I \}.
		$$
		Then, the map
		$$
		\varPhi_{I}: U_I \longrightarrow \mathring{\Delta}^{k} \times \Delta^{p-k}_{\hat{0}}
		$$
		defined by
		$$
		\varPhi_{I} (x_0, \ldots, x_p) = ( \frac{x_{i_0}(0)+ \cdots + x_{i_k}(k)}{x_{i_0}+ \cdots + x_{i_k}}, (x_{i_0}+ \cdots + x_{i_k})(0)+x_{j_1}(1)+ \cdots +x_{j_{p-k}}(p-k))
		$$
		is a diffeomorphism, where $\{j_1, \ldots, j_{p-k} \} = \{0, \ldots, p \} \backslash \{i_0, \ldots, i_k \}$.
	\end{prop}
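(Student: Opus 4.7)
The plan is to induct on $p$; for any $p$ the case $k = 0$ is trivial (then $\varPhi_I$ reduces to a smooth vertex permutation by Lemma \ref{permutation}), so we may henceforth assume $k \geq 1$ (which in particular handles all of $p = 1$). In the inductive step ($p \geq 2$, $k \geq 1$), Lemma \ref{permutation} further reduces us without loss of generality to $I = \{0, 1, \ldots, k\}$. The set-theoretic inverse $\varPsi_I$ of $\varPhi_I$ sends $((y_0, \ldots, y_k), (z_0, \ldots, z_{p-k})) \in \mathring{\Delta}^k \times \Delta^{p-k}_{\hat 0}$ to the point in $\Delta^p$ whose $j$-th coordinate is $z_0 y_j$ for $0 \leq j \leq k$ and $z_{j-k}$ for $k < j \leq p$; it remains to show that both $\varPhi_I$ and $\varPsi_I$ are smooth.

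For smoothness of $\varPsi_I$, Lemmas \ref{halfopen} and \ref{times X} give that $1 \times \varphi_0^{\Delta^{p-k}}$ is a $\dcal$-quotient map, so it suffices to show the composite $\widetilde{\varPsi}_I := \varPsi_I \circ (1 \times \varphi_0^{\Delta^{p-k}}) : \mathring{\Delta}^k \times \Delta^{p-k-1} \times [0, 1) \longrightarrow U_I$ is smooth. A direct computation yields a factorization $\widetilde{\varPsi}_I = \varphi_0^{\Delta^p} \circ (x', s)$, where $s(y, w, t) = 1 - (1-t) y_0 \in (0, 1)$ and $x'$ is an explicit smooth map into $U_{I_0} \subset \Delta^{p-1}$ with $I_0 = \{0, \ldots, k-1\}$. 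By the inductive hypothesis, $\varPhi_{I_0}$ is a diffeomorphism, so the smoothness of $x'$ reduces to that of $\varPhi_{I_0} \circ x' = (y', z')$: the component $y'$ has smooth rational coordinates and lands in $\mathring{\Delta}^{k-1}$, while $z'$ admits the explicit lift $z' = \varphi_0^{\Delta^{p-k}}(w, t/s)$ through Lemma \ref{halfopen}.

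For smoothness of $\varPhi_I$, since $0 \in I$ we have $U_I \subset \Delta^p_{\hat 0}$, and Lemma \ref{halfopen} ensures that any plot $\rho : V \longrightarrow U_I$ is locally of the form $\varphi_0^{\Delta^p} \circ (x, t)$; the condition $\rho(V) \subset U_I$ forces $x$ to land in $U_{I_0}$ and $t$ to lie in $(0, 1)$. Writing $\varPhi_{I_0}(x) = (y', z')$ by the inductive hypothesis, direct substitution gives $\varPhi_I \circ \rho = (y, z)$ with the $y$-coordinates smooth rational functions of $(y', z'_0, t)$ and $z_0 = 1 - t(1 - z'_0)$, $z_l = t z'_l$ for $l \geq 1$. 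Smoothness of $y$ into $\mathring{\Delta}^k$ follows because $\mathring{\Delta}^k$ carries the sub-diffeology by Lemma \ref{secondproperty}. For $z$, invoke the local lifting criterion from Lemma \ref{halfopen}: since $z'$ is smooth into $\Delta^{p-k}_{\hat 0}$, locally $z' = \varphi_0^{\Delta^{p-k}}(w', s')$ for some smooth $(w', s')$, and then $z = \varphi_0^{\Delta^{p-k}}(w', t s')$ provides the required local lift.

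The main technical obstacle is that $\Delta^{p-k}_{\hat 0}$ carries the quotient diffeology from $\varphi_0^{\Delta^{p-k}}$, which is strictly finer than the sub-diffeology inherited from $\abb^{p-k}$ along the non-zero-th codimension-one faces. Smoothness into this factor therefore cannot be read off from coordinate functions alone but must be verified by producing explicit local lifts through $\varphi_0^{\Delta^{p-k}}$; the calculations above are arranged precisely so that such lifts propagate cleanly through the inductive step.
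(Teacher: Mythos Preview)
Your argument is correct, but it proceeds along a different axis than the paper's.  The paper introduces the join $\Delta^I \ast \Delta^J$ as an intermediate object: Step~1 identifies $\psi_I^{-1}(U_I)$ with $\mathring{\Delta}^k \times \Delta^{p-k}_{\hat 0}$ via a pullback argument, and Step~2 proves that the join map $\psi_I$ restricts to a diffeomorphism onto $U_I$ by induction on $k$, comparing $\psi_I$ with an auxiliary diffeomorphism $\psi'$ built from the case $I' = I\setminus\{i_0\}$.  You instead bypass the join entirely and induct on $p$: you peel off the vertex $(0)$ via $\varphi_0^{\Delta^p}$, which drops both $p$ and $k$ by one and reduces to the hypothesis for $I_0 = \{0,\ldots,k-1\}$ inside $\Delta^{p-1}$.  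The computational cores are close cousins (both produce the same rational transition formulas), but your route is more elementary in that it never leaves the cone description of $\Delta^p$ and verifies smoothness of the $\Delta^{p-k}_{\hat 0}$-component by exhibiting explicit local lifts $\varphi_0^{\Delta^{p-k}}(w', ts')$ rather than by appealing to a separate quotient identification of the join.  The paper's approach, on the other hand, makes the geometric content (that $U_I$ is the ``open part'' of a join decomposition) more visible and reusable.
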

	For the proof of Proposition \ref{goodnbd}, we introduce the notion of the join of diffeological spaces. The {\sl join}, $X\ast Y$, of two diffeological spaces $X$ and $Y$ is defined to be the quotient diffeological space $X\times[0,1]\times Y/\sim$, where $(x,0,y)\sim(x,0,y')$ and $(x,1,y)\sim(x',1,y)$ for every $x,x'\in X$ and $y,y'\in Y$.
	\par\indent 
	Let $I=\{i_0, \ldots,i_k\}$ be a subset of the ordered set $\{0, \ldots, \ p\}$, and set $J=\{0,\ldots,p\}\backslash I$. Then, the canonical homeomorphism
	$$\psi_I:\Delta^I\ast\Delta^J\longrightarrow\Delta^p$$
	is defined by $\psi_I([x,\tau,y])=(1-\tau)x+\tau y$ (see Lemma \ref{underlyingtop} and Propositions \ref{finiteproducts}, \ref{Axiom 1}, and \ref{convenC0}). 
	It is an essential part of the proof of Proposition \ref{goodnbd} to show that $\psi_{I}$ restricts to a diffeomorphism from $\psi^{-1}_{I}(U_{I})$ to $U_{I}$. (Note that we do not insist that $\psi_{I}$ is smooth.)
	\begin{proof}[Proof of Proposition \ref{goodnbd}]
		We prove the proposition in three steps. \vspace{0.2cm} \\
		Step 1. We show that the diffeological subspace $\psi_{I}^{-1}(U_{I})$ of $\Delta^{I} \ast \Delta^{J}$ is diffeomorphic to $\mathring{\Delta}^{k} \times \Delta^{p-k}_{\hat{0}}$. Consider the diagram
		\begin{equation*}
		\begin{tikzcd}
		\mathring{\Delta}^{I} \times [0,1) \times \Delta^{J}
		\arrow[hook]{r} \arrow[swap]{d}{\pi|_{\psi^{-1}_{I}(U_{I})}} 
		& \Delta^{I} \times [0, 1] \times \Delta^{J} \arrow{d}{\pi}\\
		\psi^{-1}_{I}(U_{I}) \arrow[hook]{r}
		& \Delta^{I} \ast \Delta^{J},
		\end{tikzcd}
		\end{equation*}
		where $J = \{j_1, \ldots, j_{p-k}\} \ \text{and} \ \pi$ is the canonical $\dcal$-quotient map. We can show that this diagram is a pullback diagram in $\dcal$ by an argument similar to that in the proof of Lemma \ref{secondproperty}. Since $\pi|_{\psi^{-1}_{I}(U_{I})}$ is a $\mathcal{D}$-quotient map by Lemma \ref{initialfinal}(2),
		\begin{eqnarray*}
			\psi^{-1}_{I}(U_{I}) & = & \mathring{\Delta}^I\times [0,1)\times \Delta^{J}/\sim\\
			& \cong & \mathring{\Delta}^I\times ([0,1)\times \Delta^{J}/\sim)\\
			& \cong &  \mathring{\Delta}^k\times \Delta^{p-k}_{\hat{0}},
		\end{eqnarray*}
		where the equivalence relation $\sim$ on $[0, 1) \times \Delta^J$ is defined by $(0, y)\sim(0, y')$ for every $y, y' \in Y$ (see Lemmas \ref{times X} and \ref{halfopen}).\vspace{0.2cm} \\
		Step 2. We show that the homeomorphism 
		\[\psi^{-1}_{I}(U_{I}) = \mathring{\Delta}^{I} \times [0, 1) \times \Delta^{J}/\sim \ \xrightarrow{\ \psi_{I}\ } U_{I}
		\] 
		is a diffeomorphism.
		For $k=0$, $\psi_I$ is a diffeomorphism by Lemma \ref{halfopen}. Suppose that for any $I = \{i_0, \ldots, i_{k-1} \}$, $\psi_I$ is a diffeomorphism. We show that for any $I = \{i_0, \ldots, i_k \}$, $\psi_I$ is a diffeomorphism. For simplicity, let $I=\{0,\ldots,k\}$ (Lemma \ref{permutation}). For $I' \coloneqq\{1,\ldots,k\}$, the open neighborhood $U_{I'}$ of the open simplex $(1, \ldots, k)$ of $\Delta^{\{1,\ldots,p\}}$ is defined similarly. Then, the map
		$$
		\psi_{I'}:\mathring{\Delta}^{I'}\times[0,1)\times\Delta^J/\sim\longrightarrow U_{I'},
		$$
		defined by $\psi_{I'}([x', \sigma, y]) = (1-\sigma)x' + \sigma y$, is a diffeomorphism by the induction hypothesis, where $J=\{k+1, \ldots, \ p\}$. 
		
		Consider the solid arrow diagram
		\begin{equation*}
		\begin{tikzcd}[column sep=4em]
		(0,1)\times (\mathring{\Delta}^{I'}\times [0,1)\times \Delta^{J}/\sim)
		\arrow{r}{1\times\psi_{I'}} \arrow[swap]{r}{\cong} \arrow{d} 
		& (0,1)\times U_{I'} \arrow{d}\\
		\mathring{\Delta}^I\times [0,1)\times \Delta^{J}/\sim \arrow[dashrightarrow]{r}{\psi'}
		& U_I,
		\end{tikzcd}
		\end{equation*}
		where the left and right vertical arrows are defined by
		\[
		\begin{tikzcd}
		(s,[x',\sigma, y]) \arrow[mapsto]{d} \\
		{[}(1-s)(0)+sx', \sigma, y{]}
		\end{tikzcd}
		\text{and}
		\begin{tikzcd}
		(s,(1-\sigma)x'+\sigma y)	\arrow[mapsto]{d} \\
		(1-s)(0)+s((1-\sigma)x'+\sigma y),
		\end{tikzcd}
		\]
		respectively. Since the canonical isomorphisms
		$$(0, 1) \times (\mathring{\Delta}^{I'} \times [0,1) \times \Delta^{J}/\sim) \cong ((0,1)\times \mathring{\Delta}^{I'}) \times ([0,1) \times \Delta^{J}/\sim),$$
		$$ \mathring{\Delta}^{I} \times [0,1) \times \Delta^{J}/\sim \ \cong \mathring{\Delta}^I \times ([0,1) \times \Delta^{J}/\sim)$$
		exist (Lemma \ref{times X}), the left vertical arrow is obviously a diffeomorphism (see Lemma \ref{secondproperty}). The right vertical arrow is also a diffeomorphism since it is a restriction of the $\dcal$-quotient map
		$$
		[0,1)\times \Delta^{\{1, \ldots, p\}} \cong \Delta^{\{1, \ldots, p\}} \times [0, 1) \xrightarrow{\varphi_0} \Delta^{p}_{\hat{0}}
		$$
		(see Lemma \ref{halfopen}). Thus, we define the diffeomorphism
		$\psi': \mathring{\Delta}^I \times [0, 1) \times \Delta^J/\sim \longrightarrow U_I$
		by the commutativity of the above diagram. Then, we have only to show that the composite $\psi^{' -1} \circ \psi_{I}$ is a self-diffeomorphism of $\mathring{\Delta}^{I} \times [0, 1) \times \Delta^{J}/\sim$.
		\par\indent
		Since $\psi_I$ sends $[(1-t)(0)+tx',\tau,y]$ to $(1-\tau)\{(1-t)(0)+tx'\}+\tau y$, the composite $\psi'^{-1}\psi_I \ :\mathring{\Delta}^I\times[0,1)\times\Delta^J/\sim\longrightarrow\mathring{\Delta}^I\times[0,1)\times\Delta^J/\sim$ is given by 
		$$[(1-t)(0)+tx',\tau,y]\longmapsto\left[(1-\tau)(1-t)(0)+\{1-(1-\tau)(1-t)\}x',\frac{\tau}{1-(1-\tau)(1-t)},y\right],$$
		which is obviously smooth (see Lemma \ref{secondproperty}). Since the composite $\psi^{-1}_I\psi'  :\mathring{\Delta}^I\times[0,1)\times\Delta^J/\sim\longrightarrow\mathring{\Delta}^I\times[0,1)\times\Delta^J/\sim$ is given by
		$$[(1-s)(0)+sx',\sigma,y]\longmapsto\left[ \frac{1-s}{1-s\sigma}(0)+\frac{s-s\sigma}{1-s\sigma}x',s\sigma,y\right],$$
		$\psi^{-1}_I\psi'$ is also smooth (see Lemma \ref{secondproperty}). Hence, $\psi'^{-1}\psi_I$ is a diffeomorphism.\vspace{0.2cm} \\
		Step 3. We can easily see that $\varPhi_{I}$ is just the inverse of the diffeomorphism
		\par
		\hspace{3cm}	$\mathring{\Delta}^k \times \Delta^{p-k}_{\hat{0}} \cong \mathring{\Delta}^I \times [0, 1) \times \Delta^{J}/\sim \ \xrightarrow[\cong]{\ \  \psi_I\ \ } U_I.$
	\end{proof}
	\begin{defn}\label{gnbd}
		For $I=\{i_0,\ldots,i_k\}$ $(0\leq k<p)$ and $0 < \epsilon \leq 1$, $U_I(\epsilon)$ denotes the diffeological subspace of $\Delta^p$ defined by
		$$
		U_{I}(\epsilon) = \{(x_0, \ldots, x_p) \in \Delta^p \ | \ x_i > 0  \ \text{for} \ i \in I \ \text{and} \ x_{i_{0}} + \cdots + x_{i_{k}} > 1-\epsilon \},
		$$
		which is called a {\sl good neighborhood} of an open simplex $(i_0, \ldots, i_{k})$ of $\Delta^{p}$. 
	\end{defn}
	For $0<\epsilon\leq 1$, $\Delta^{l}_{\hat{i}}(\epsilon)$ denotes the diffeological subspace of $\Delta^l$ defined by
	$$
	\Delta^{l}_{\hat{i}}(\epsilon)=\{(x_0, \ldots, x_l) \in \ \Delta^l \ | \ x_i > 1-\epsilon \}.
	$$
	By Proposition \ref{goodnbd}, we have the diffeomorphism
	\[
	\varPhi_{I}: U_{I}(\epsilon) \xrightarrow[\ \ \cong\ \ ]{} \mathring{\Delta}^{k} \times \Delta^{p-k}_{\hat{0}}(\epsilon).
	\]
	\if0
	$$
	U_I(\epsilon)= 
	\begin{cases}
	\Delta^p_{\hat{i}_0}(\epsilon) & \text{for} \ k=0, \\
	&\\
	\varPhi^{-1}_I(\mathring{\Delta}^k\times \Delta^{p-k}_{\hat{0}} (\epsilon)) & \text{for} \ 0<k<p
	\end{cases}
	$$
	holds.
	\fi
	\begin{rem}\label{gn}
		This remark, which relates to Proposition \ref{goodnbd}, is used in Section 5.\\
		$(1)$
		From Proposition \ref{goodnbd}, the composite
		$$
		U_I \xrightarrow{\varPhi_I} \mathring{\Delta}^{k} \times \Delta^{p-k}_{\hat{0}} \xrightarrow{proj} \Delta^{p-k}_{\hat{0}}
		$$
		extends to an affine surjection from $\Delta^{p}$ to $\Delta^{p-k}$.\\
		$(2)$
		From the definition of $\varPhi_{I}$, we see that the map
		$$
		\varPhi_{I} : U_{I \ \mathrm{sub}} \longrightarrow \mathring{\Delta}^{k}_{\mathrm{sub}} \times \Delta^{p-k}_{\hat{0} \ \mathrm{sub}}
		$$
		is also a diffeomorphism.
	\end{rem}
	\section{Verification of Axiom 2}
	In this section, we verify Axiom 2.
	Recall that a map $f: A \longrightarrow B$ between convex sets is affine if $f$ preserves convex combinations. We do not assume that an affine map $f : \Delta^p \longrightarrow \Delta^q $ sends vertices to vertices.
	\begin{prop}{\bf(Axiom 2)}\label{Axiom 2}
		Any affine map $f :\Delta^p\longrightarrow\Delta^q$ is smooth.
	\end{prop}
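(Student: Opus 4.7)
The plan is induction on $p$. The base case $p = 0$ is immediate since any affine map $\Delta^0 \to \Delta^q$ is a constant. For the inductive step, I would use the final structure of $\Delta^p$ (Definition \ref{simplices}) to reduce smoothness of $f \colon \Delta^p \to \Delta^q$ to smoothness of each composite $f \circ \varphi_i \colon \Delta^{p-1} \times [0, 1) \to \Delta^q$. A direct expansion gives $f \circ \varphi_i(x, t) = (1 - t) v_i + t g_i(x)$, where $v_i = f((i))$ and $g_i = f \circ d^i \colon \Delta^{p-1} \to \Delta^q$ is affine and so, by the inductive hypothesis, smooth. Writing this composite as $\sigma_{v_i} \circ (g_i \times 1_{[0, 1)})$, where $\sigma_v \colon \Delta^q \times [0, 1) \to \Delta^q$ is the cone map $\sigma_v(y, t) = (1 - t) v + t y$, and invoking Lemma \ref{times X}, the whole inductive step reduces to the following \emph{cone lemma}: for every $v \in \Delta^q$, the map $\sigma_v$ is smooth.

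To prove the cone lemma, I would pick an index $j$ with $v_j > 0$ (possible since $\sum_j v_j = 1$). Then $\sigma_v(y, t)_j \geq (1 - t) v_j > 0$, so $\sigma_v$ factors through the diffeological subspace $\Delta^q_{\hat{j}}$. Since $\varphi_j \colon \Delta^{q-1} \times [0, 1) \to \Delta^q_{\hat{j}}$ is a $\dcal$-quotient map by Lemma \ref{halfopen}, smoothness of $\sigma_v$ reduces to producing local smooth lifts through $\varphi_j$. Solving $\varphi_j(z, u) = \sigma_v(y, t)$ explicitly yields $u = 1 - (1 - t) v_j - t y_j$ and, when $u > 0$, $d^j(z) = \bigl((1 - t) v + t y - (1 - u)(j)\bigr)/u$. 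When $v$ is not the vertex $(j)$ (so $v_j < 1$) one checks $u > 0$ globally, giving a smooth global lift. When $v = (j)$, the formulas degenerate on $\{t = 0\}$ (where $\sigma_{(j)}$ collapses to the constant $(j)$), and I would handle this by computing $\sigma_{(j)} \circ (\varphi_i \times 1_{[0, 1)})$ directly for each $i$: the case $i = j$ reduces to the identity $\sigma_{(j)}(\varphi_j(y, s), t) = \varphi_j(y, ts)$, while for $i \neq j$ the lift's $z$-component turns out to depend only on $(y, s)$ and not on $t$, so the degeneracy at $t = 0$ is absorbed by the collapse $\varphi_j(\cdot, 0) = (j)$.

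The main obstacle will be verifying that this rational (non-affine) $z$-component of the lift is smooth as a map into $\Delta^{q-1}$ with its inductively defined diffeology---not merely into $\Delta^{q-1}_{\mathrm{sub}}$---since the inductive hypothesis of the main induction only gives smoothness for affine maps. I would address this by recognizing the $z$-component as the composite of $\varphi_i$ with the ``radial projection from the vertex $(j)$ onto the opposite face'', and handling that projection by a secondary induction on $q$ that iterates the $\varphi_0$-lifting trick from Lemma \ref{halfopen} inside the lower-dimensional simplex $\Delta^{q-1}$.
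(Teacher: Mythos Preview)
Your reduction to a uniform \emph{cone lemma}---smoothness of $\sigma_v$ for every $v\in\Delta^q$---is a legitimate and different organization from the paper's, and it can be completed. But the proposal leaves the decisive step as a sketch. For $v\ne(j)$ you say $u>0$ ``gives a smooth global lift,'' yet the $z$-component of that lift is the radial projection $\rho_j$ applied to $\sigma_v(y,t)$, and its smoothness into $\Delta^{q-1}$ (with the inductive diffeology, not the sub-diffeology) is exactly what must be proved; you do not return to this case. For $v=(j)$ you correctly isolate $\rho_j\circ\varphi_i$ and invoke a secondary induction on $q$, but the mechanism that makes that induction run is the recursive identity
$\rho_j\circ\varphi_i(x,s)=\varphi_{i'}\bigl(\rho_{j'}(x),\,s(1-x_{j'})/(1-sx_{j'})\bigr)$
in one dimension lower, which you do not state. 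With it, precomposing with each $\varphi_l$ on the domain either removes the singularity ($l=j'$) or lands in the inductive hypothesis ($l\ne j'$), and the non-vertex case then follows by combining this with the cone lemma for $\Delta^{q-1}$. Without it, ``iterates the $\varphi_0$-lifting trick from Lemma~\ref{halfopen}'' is a direction, not an argument. (A small misattribution: Lemma~\ref{halfopen} says $\varphi_j$ is a \emph{quotient} map, which controls maps \emph{out of} $\Delta^q_{\hat j}$; producing a smooth lift only needs $\varphi_j$ smooth.)

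For comparison, the paper never formulates a general cone lemma. Instead it runs a three-statement spiral induction (Lemma~\ref{sp}) establishing simultaneously the smoothness of the degeneracies $s^p$, of the vertex-cone $h:\Delta^p\times[0,1)\to\Delta^p_{\hat p}$, and of $id:\Delta^p\to\Delta^p_{\mathrm{cone}}$; your radial projection is the map $\lambda$ in Step~3 there, tamed via the already-proven smoothness of $s^p$. From $s^0$ it derives a cone-extension principle (Lemma~\ref{cone ext}) and finishes Axiom~2 by splitting on the location of $f((i))$: interior (Lemma~\ref{secondproperty}), vertex (cone extension), or proper open face (Proposition~\ref{goodnbd} peels off an interior factor and reduces to the vertex case). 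Your approach sidesteps Proposition~\ref{goodnbd} entirely---a real simplification---at the cost of meeting the full cone lemma head-on; the shared technical core in both routes is the smoothness of $\rho_j\circ\varphi_i$.
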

	For the proof of Proposition \ref{Axiom 2}, we need several lemmas.
	\par\indent
	Define the affine maps $s^k  :\Delta^{p+1}\longrightarrow\Delta^p$ $(0\leq k\leq p) $ by 
	$$
	s^k((i))= 
	\begin{cases}
	\  \ \ 	(i)  \ \ \ \ \,  \mathrm{\ for\ } i\leq k \\
	(i-1) \ \mathrm{\ for\ } i \geq k +1.
	\end{cases}
	$$
	First, we show that $s^k :\Delta^{p+1} \longrightarrow \Delta^p$ is smooth for $p\geq0$ and $0\leq k \leq p$. By Lemma \ref{permutation}, it is sufficient to show that $s^p : \Delta^{p+1} \longrightarrow \Delta^p$ is smooth for $p\geq 0$. For the proof, we introduce the cone construction.
	\par\indent
	The {\sl cone}, $CA$, of a  diffeological space $A$ is defined to be the quotient diffeological space $A\times I/A\times\{0\}$. The {\sl open cone}, $\mathring{C}A$, of a diffeological space $A$ is defined to be the diffeological subspace $CA-A \times \{1 \}$ of $CA$.
	
	\begin{lem}\label{cone}
		Let $A$ be a diffeological space.\\
		$(1)$ $CA$ contains the base point $\ast$ as a $\dcal$-deformation retract.\\
		$(2)$ $\mathring{C}A$ is an open diffeological subspace of $CA$ that is isomorphic to the quotient diffeological space $A \times [0, 1)/ A \times \{0 \}$.\\
		$(3)$ $CA-\ast$ is isomorphic to $A \times (0, 1]$.
		\begin{proof} (1) We construct a $\dcal$-deformation of $CA$ onto the base point.
			\par\indent
			Define the map $H :A\times I\times I\longrightarrow A\times I$ by $H(a,t,s)=(a,ts)$. Let $\pi :A\times I\longrightarrow CA$ denote the canonical $\dcal$-quotient map, and define the map $h :CA\times I \longrightarrow CA$ by the commutative diagram
			\begin{center}
				\begin{tikzcd}
					A\times I\times I \arrow{r}{H} \arrow{d}{\pi\times 1} & A\times I \arrow{d}{\pi}\\
					CA\times I \arrow{r}{h} & CA.
				\end{tikzcd}
			\end{center}
			Since $\pi\times 1 $ is final (Lemma \ref{times X}), $h$ is a smooth map, which is the desired deformation of $CA$ onto the base point.\\
			$(2)$ Consider the commutative diagram
			$$
			\begin{tikzcd}
			A\times [0, 1) \arrow[hook]{r} \arrow[swap]{d}{\pi} & A\times I \arrow{d}{\pi}\\
			\mathring{C}A \arrow[hook]{r} & CA,
			\end{tikzcd}
			$$
			and use an argument similar to that in the proof of Lemma \ref{secondproperty}.\\
			$(3)$ Consider the commutative diagram
			$$
			\begin{tikzcd}
			A\times (0, 1] \arrow[hook]{r} \arrow[swap]{d}{\pi} & A\times I \arrow{d}{\pi}\\
			CA-\ast \arrow[hook]{r} & CA.
			\end{tikzcd}
			$$
			By an argument similar to that in the proof of Lemma \ref{secondproperty}, the left vertical arrow is a $\dcal$-quotient map that is bijective, and hence, a diffeomorphism.
		\end{proof}
	\end{lem}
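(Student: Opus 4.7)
The plan is to prove all three parts by writing down explicit maps and then invoking the machinery of $\dcal$-quotient maps from Section 2, particularly Lemmas \ref{times X}, \ref{initial}, and \ref{initialfinal}(2).

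For part (1), I would construct the obvious contraction along the cone coordinate: lift to $A \times I \times I$ via $H(a,t,s) = (a, ts)$, which is smooth because it is polynomial in a product of diffeological subspaces of $\rbb^{n}$. To push $H$ down to $h : CA \times I \longrightarrow CA$, I need the quotient $\pi \times 1 : A \times I \times I \longrightarrow CA \times I$ to be a $\dcal$-quotient map; this is exactly Lemma \ref{times X} applied to $\pi : A \times I \longrightarrow CA$. The induced map $h$ is automatically smooth by the universal property of final structures, and checking $h(-,1) = 1_{CA}$, $h(-,0) \equiv \ast$, and $h(\ast,-) \equiv \ast$ is immediate.

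For parts (2) and (3), the strategy is identical: in each case build a commutative square
\begin{center}
\begin{tikzcd}
A \times S \arrow[hook]{r} \arrow{d}{\pi|} & A \times I \arrow{d}{\pi}\\
C' \arrow[hook]{r} & CA
\end{tikzcd}
\end{center}
with $S = [0,1)$, $C' = \mathring{C}A$ for (2), and $S = (0,1]$, $C' = CA - \ast$ for (3). Each square is a pullback in $Set$, and the top arrow is a $\dcal$-embedding (product of a $\dcal$-embedding with $1_A$), so by Lemma \ref{initial} the squares are pullbacks in $\dcal$. Lemma \ref{initialfinal}(2) then makes the restricted left vertical $\pi|$ a $\dcal$-quotient map. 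For part (2), openness of $\mathring{C}A$ in $CA$ follows because its preimage $A \times [0,1)$ is open in $A \times I$ and $\pi$ is final, and the identification of $\mathring{C}A$ with $A \times [0,1)/A \times \{0\}$ is then the factorization of the $\dcal$-quotient $\pi|$ through the canonical quotient. For part (3), the crucial observation is that the equivalence relation defining $CA$ only collapses $A \times \{0\}$, so once the base point is removed the $\dcal$-quotient $\pi|_{A \times (0,1]}$ is bijective; a bijective $\dcal$-quotient map is a diffeomorphism.

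The main obstacle, if any, is the pullback verification: one must confirm that the squares above really are pullbacks in $\dcal$, which by Lemma \ref{initial} reduces to checking (a) that they are set-theoretic pullbacks, which is trivial, and (b) that the horizontal inclusions are genuine $\dcal$-embeddings. Since $[0,1) \longhookrightarrow I$ and $(0,1] \longhookrightarrow I$ are $\dcal$-embeddings and products preserve $\dcal$-embeddings, this is routine. The only mildly subtle point is the passage from ``the restricted quotient map is a bijective $\dcal$-quotient'' to ``diffeomorphism'' in part (3), but this is immediate from the definition of a $\dcal$-quotient map.
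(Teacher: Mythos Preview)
Your proposal is correct and follows essentially the same approach as the paper's proof. The paper defines the same map $H(a,t,s)=(a,ts)$ for part (1) and invokes Lemma \ref{times X} exactly as you do; for parts (2) and (3) the paper sets up the same two squares and says to use ``an argument similar to that in the proof of Lemma \ref{secondproperty},'' which is precisely the Lemma \ref{initial}/\ref{initialfinal}(2) mechanism you spell out in detail.
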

	We define the map
	$$
	\varphi_{p}: \Delta^{p-1} \times I \longrightarrow \Delta^p
	$$
	by $\varphi_{p}(x, t) = (1-t)(p) + td^{p}(x)$, which is an obvious extension of the map $\varphi_{p}$ in Definition \ref{simplices}. Let $\Delta^{p}_{\mathrm{cone}}$ be the set $\Delta^{p}$ endowed with the quotient diffeology by $\varphi_{p} : \Delta^{p-1} \times I \longrightarrow \Delta^{p}$. 
	Then, the $\dcal$-deformation in the proof of Lemma \ref{cone}(1) specializes to the $\dcal$-deformation
	\begin{eqnarray*}
		h:\Delta^p_{\mathrm{cone}}\times I& \longrightarrow &\Delta^{p}_{\mathrm{cone}}
	\end{eqnarray*}
	of $\Delta^p_{\mathrm{cone}}$ onto the vertex $(p)$; explicitly, $h(z, s) = (1-s)(p)+sz.$
	\par\indent
	The smoothness of $s^p$ is shown in the following lemma; note that $s^0 : \Delta^1 \longrightarrow \Delta^0$ is obviously smooth.
	\begin{lem}\label{sp}
		For all $p\geq 1$,
		\begin{itemize}
			\item[$(1)_p$]
			$id : \Delta^p\longrightarrow\Delta^p_{\mathrm{cone}}$ is smooth.
			
			\item[$(2)_p$]
			$h : \Delta^p \times [0,1) \longrightarrow\Delta^p_{\hat{p}}$ is smooth.
			
			\item[$(3)_p$]
			$s^p : \Delta^{p+1}\longrightarrow\Delta^p$ is smooth.
			
		\end{itemize}
		\begin{proof}
			We arrange the statements in the following order:
			$$(1)_1, \ (2)_1, \ (3)_1, \ldots, (1)_{p}, \ (2)_{p}, \ (3)_{p}, \ (1)_{p+1}, \ldots.$$ Since $(1)_p$ obviously holds for $p = 1$, we inductively prove all statements in three steps.
			\if0
			\item[$(1)_p$]
			holds for $p\leq 1$. Thus it is sufficient to show  that the implications
			\begin{eqnarray*}
				\phantom{A} & \phantom{A} & (1)_{0} \Longrightarrow(2)_{0} \Longrightarrow(3)_{0}\\
				& \Longrightarrow & \cdots \cdots \cdots \cdots \cdots \cdots \cdots\\
				& \Longrightarrow & (1)_{p} \Longrightarrow(2)_{p} \Longrightarrow(3)_{p}\\
				& \Longrightarrow & (3)_{p+1} \Longrightarrow \cdots \cdots \cdots \cdot
			\end{eqnarray*}
			hold. We prove the implications in three steps.
			\fi
			\vspace{0.2cm}\\
			Step 1. Suppose that the implications hold up to $(1)_{p}$.
			Let us consider the commutative diagram
			\begin{equation*}
			\begin{tikzcd}
			\Delta^{p}\times [0,1) \arrow{r}{id \times 1} \arrow[swap]{d}{h} & \Delta^{p}_{\mathrm{cone}} \times [0,1) \arrow{d}{h}\\
			\Delta^p_{\hat{p}}\arrow{r}{id} & \Delta^{p}_{\mathrm{cone}} -\Delta^{p-1}_{(p)}.
			\end{tikzcd}
			\end{equation*}
			The right vertical arrow $h$ is a restriction of the smooth map $h : \Delta^{p}_{\mathrm{cone}} \times I \longrightarrow \Delta^{p}_{\mathrm{cone}},$ and the upper horizontal arrow $id\times 1 $ is smooth by $(1)_p$. Since the lower horizontal arrow is an isomorphism by Lemma \ref{halfopen} and Lemma \ref{cone}(2), $(2)_p$ holds.\vspace{0.2cm} \\
			Step 2. Suppose that the implications hold up to $(2)_p$.
			For $i\neq p, \ p+1$, we have the commutative diagram
			\begin{equation*}
			\begin{tikzcd}
			\Delta^{p}\times [0,1) \arrow{r}{s^{p-1}\times 1} \arrow[swap]{d}{\varphi_i} & \Delta^{p-1} \times [0,1) \arrow{d}{\varphi_i}\\
			\Delta^{p+1}_{\hat{i}}\arrow{r}{s^p} & \Delta^{p}_{\hat{i}}.
			\end{tikzcd}
			\end{equation*}
			Since $s^{p-1}\times 1$ is smooth by $(3)_{p-1}$ and the left vertical arrow $\varphi_i$ is final (Lemma \ref{halfopen}), $s^p$ is smooth on $\Delta^{p+1}_{\hat{i}}$ for $i \neq p,p+1$. 
			\if0
			Thus we show that $s^p$ is smooth near the closed subsimplex $\langle p,p+1 \rangle$. 
			Since we have the commutative diagram
			\begin{equation*}
			\begin{tikzcd}
			\mathring{\Delta}^1\times\Delta_{\hat{0}}^p \arrow[r,"proj"] \arrow[d, "\cong",,"\Psi_{\{p,p+1\}}"']
			& \Delta^p_{\hat{0}} \arrow[d, "\Psi_{\{p\}}","\cong"']\\
			U_{\{p,p+1\}} \arrow[r,"s^p"] &\Delta^p_{\hat{p}},
			\end{tikzcd}
			\end{equation*}
			$s^p$ is smooth near the open simplex $(p,p+1)$ (cf. Proposition \ref{goodnbd}). 
			\fi
			
			For $i = p, p+1$, we have the commutative diagram
			\begin{equation*}
			\begin{tikzcd}[column sep=6em]
			\Delta^{p}\times[0,1)  \arrow{dr}{h}  \arrow[swap]{d}{\varphi_i}  
			& \phantom{A} \\
			\Delta^{p+1}_{\hat{i}}  \arrow{r}{s^p}  &\Delta^p_{\hat{p}}.
			\end{tikzcd}
			\end{equation*}
			Since $\varphi_i$ is final (Lemma \ref{halfopen}), $s^p$ is smooth on $\Delta^{p+1}_{\hat{i}}$ by $(2)_p$. \vspace{0.2cm}\\
			\if0
			Similarly, by considering the commutative diagram
			\begin{equation*}
			\begin{tikzcd}[column sep=6em]
			\Delta^{p}\times[0,1)  \arrow[dr,"h"]  \arrow[d,"\varphi_{p+1}",swap]  
			& \phantom{A} \\
			\Delta^{p+1}_{\widehat{p+1}}  \arrow[r,"s^p"]  &\Delta^p_{\hat{p}},
			\end{tikzcd}
			\end{equation*}
			we see that $s^p$ is smooth on $\Delta^{p+1}_{\widehat{p+1}}$. Thus, it is seen that $(3)_p$ holds.
			\fi
			Step 3. Suppose that the implications hold up to $(3)_p$. Let us show that $(1)_{p+1}$ holds. Since the map $id  :\Delta^{p+1}\longrightarrow\Delta^{p+1}_{\mathrm{cone}}$ restricts to the diffeomorphism $id : \Delta^{p+1}-\Delta^{p}_{(p+1)}\longrightarrow \Delta^{p+1}_{\mathrm{cone}}-\Delta^{p}_{(p+1)}$ (Lemma \ref{halfopen} and Lemma \ref{cone}(2)), we need to only show that $id : \Delta^{p+1}\longrightarrow \Delta^{p+1}_{\mathrm{cone}}$ is smooth near the face $\Delta^p_{(p+1)}$. The $\Delta^p$-component and $(0,1]$-component of the composite
			$$
			\Delta^{p+1}-\{(p+1)\} \xrightarrow[]{\ id\ } \Delta^{p+1}_{\mathrm{cone}}-\{(p+1)\} \xrightarrow[\cong]{\ \varphi_{p+1}^{-1}\ }\Delta^p\times (0,1]
			$$
			are denoted by $\lambda$ and $\mu$, respectively (see Lemma \ref{cone}(3)); explicitly,
			$$
			\lambda ((1-t)(p+1)+td^{p+1}(x))=x \hbox{ and } \mu((1-t)(p+1)+td^{p+1}(x))=t.
			$$
			By Lemma \ref{firstproperty}, $\mu$ is smooth. For the smoothness of $\lambda$, we show that $\lambda$ is smooth on $\Delta^{p+1}_{\hat{i}}$ for $i\neq p+1$. For simplicity, suppose that $i=p$ (Lemma \ref{permutation}). Since $\lambda|_{\Delta^{p+1}_{\hat{p}}}$ sends $(1-t)(p+1)+td^{p+1}((1-s)(p)+sd^{p}(y))\in\Delta^{p+1}_{\hat{p}}$ to $(1-s)(p)+sd^{p}(y)\in\Delta^p$, we have the factorization
			\begin{center}
				\begin{tikzcd}[column sep=6em]
					\Delta^{p+1}_{\hat{p}}  \arrow{r}{\lambda|_{\Delta^{p+1}_{\hat{p}}}}  \arrow[swap]{d}{(s^p,\mu)}  
					& \Delta^p,\\
					U  \arrow[swap]{ur}{\kappa}  &\phantom{A} 
				\end{tikzcd}
			\end{center}
			where $U$ is the open diffeological subspace of $\Delta^p_{\hat{p}}\times (0,1]$ defined by
			$$U = \{ ((1-s)(p)+sd^{p}(y),t)\in \Delta^p_{\hat{p}}\times (0,1] \ | \ s < t\}
			$$
			and $\kappa:U\longrightarrow\Delta^p$ is the map defined by
			$$\kappa((1-s)(p)+sd^{p}(y),t)=(1-\frac{s}{t})(p)+\frac{s}{t}d^{p}(y).
			$$
			We need only to show that $\kappa$ is smooth.
			\par\indent
			Define the open diffeological subspace ${V}$ of $[0,1)\times(0,1]$ by
			$${V}=\{(s,t)\in[0,1)\times(0,1] \ | \ s < t\},$$
			and define the smooth map $\delta :{V}\longrightarrow[0,1)$ by $\delta(s,t)=\frac{s}{t}$. Now, consider the commutative diagram
			$$
			\begin{tikzcd}
			\Delta^{p-1}\times V \arrow[hook]{r} \arrow[swap]{d}{\varphi_p \times 1|_{U}} & \Delta^{p-1} \times [0,1) \times (0,1] \arrow{d}{\varphi_p \times 1}\\
			U \arrow[hook]{r} & \Delta^{p}_{\hat{p}} \times (0, 1].
			\end{tikzcd}
			$$
			We can show that this diagram is a pullback diagram in $\dcal$ by an argument similar to that in the proof of Lemma \ref{secondproperty}. Since the right vertical arrow $\varphi_p \times 1$ is a $\dcal$-quotient map (Lemmas \ref{halfopen} and \ref{times X}), the left vertical arrow $\varphi_p \times 1|_{U}$ is also a $\dcal$-quotient map (Lemma \ref{initialfinal}(2)).
			Next, consider the commutative diagram
			\begin{center}
				\begin{tikzcd}[column sep=4em]
					\Delta^{p-1}\times V  \arrow{r}{1\times\delta} \arrow[swap]{d}{\varphi_p\times 1|_{U}}  &\Delta^{p-1}\times [0,1) \arrow{d}{\varphi_p}\\
					U \arrow{r}{\kappa}  & \Delta^p.
				\end{tikzcd}
			\end{center}
			Since $\varphi_{p} \circ (1 \times \delta)$ is smooth and $\varphi_{p} \times 1|_{U}$ is final, $\kappa$ is smooth.
		\end{proof}
	\end{lem}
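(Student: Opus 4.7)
The plan is a single induction that cycles through the three statements in the order $(1)_1 \Rightarrow (2)_1 \Rightarrow (3)_1 \Rightarrow (1)_2 \Rightarrow (2)_2 \Rightarrow (3)_2 \Rightarrow \cdots$. The base case $(1)_1$ is immediate because $\Delta^1$ and $\Delta^1_{\mathrm{cone}}$ both arise as quotients of $\Delta^0 \times I$ by $\varphi_1$, hence agree as diffeological spaces.

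For $(1)_p \Rightarrow (2)_p$: compose the smooth map $id \times 1 : \Delta^p \times [0,1) \longrightarrow \Delta^p_{\mathrm{cone}} \times [0,1)$ with the smooth deformation $h$ of Lemma \ref{cone}(1). The image of this restriction lies in the open subspace $\Delta^p_{\mathrm{cone}} - \Delta^{p-1}_{(p)}$, which by Lemmas \ref{halfopen} and \ref{cone}(2) is diffeomorphic to $\Delta^p_{\hat{p}}$; this gives smoothness of $h:\Delta^p \times [0,1) \longrightarrow \Delta^p_{\hat{p}}$.

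For $(2)_p \Rightarrow (3)_p$ (also using $(3)_{p-1}$): since $\Delta^{p+1}$ carries the final structure with respect to $\varphi_0, \ldots, \varphi_{p+1}$, it suffices to check that each composite $s^p \circ \varphi_i$ is smooth. For $i \neq p, p+1$ a direct computation gives $s^p \circ \varphi_i = \varphi_i \circ (s^{p-1} \times 1)$, smooth by the induction hypothesis. For $i = p$ and $i = p+1$, using $s^p d^p = s^p d^{p+1} = 1_{\Delta^p}$ one finds $s^p \circ \varphi_i = h$, smooth by $(2)_p$.

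For $(3)_p \Rightarrow (1)_{p+1}$: on the complement of the face $\Delta^p_{(p+1)}$ the map $id : \Delta^{p+1} \longrightarrow \Delta^{p+1}_{\mathrm{cone}}$ is a diffeomorphism by Lemmas \ref{halfopen} and \ref{cone}(2), so only behaviour near that face matters. Every $y \in \Delta^{p+1} - \{(p+1)\}$ has a unique cone presentation $y = (1-t)(p+1) + t \, d^{p+1}(x)$, and smoothness of $id$ on $\Delta^{p+1} - \{(p+1)\}$ reduces to smoothness of the coordinate maps $\lambda(y) = x$ and $\mu(y) = t$ into $\Delta^p$ and $(0,1]$ respectively. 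The $\mu$-component $1 - y_{p+1}$ is obviously smooth (Lemma \ref{firstproperty}). For $\lambda$, work on each half-open simplex $\Delta^{p+1}_{\hat{i}}$ with $i \neq p+1$, and by Lemma \ref{permutation} assume $i = p$. There $\lambda$ factors through $s^p$ (smooth by $(3)_p$) and a residual map $\kappa : U \longrightarrow \Delta^p$ on an open subspace $U \subset \Delta^p_{\hat{p}} \times (0,1]$, given explicitly by $\kappa((1-s)(p)+s d^p(y), t) = (1 - s/t)(p) + (s/t) d^p(y)$. Verifying $\kappa$ is smooth uses that $\varphi_p \times 1$ restricts to a $\dcal$-quotient map on the open subspace $\Delta^{p-1} \times \{(s,t) \mid s < t\}$ (by Lemmas \ref{halfopen}, \ref{times X}, and \ref{initialfinal}(2)) and that $\kappa \circ (\varphi_p \times 1) = \varphi_p \circ (1 \times \delta)$ with $\delta(s,t) = s/t$, the latter clearly smooth.

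The main obstacle is the final implication $(3)_p \Rightarrow (1)_{p+1}$: it requires inverting the cone parametrization near the base face, producing a map $\lambda$ whose smoothness is not formal but has to be checked by an explicit factorization and a pullback computation ensuring that the relevant restriction of a $\dcal$-quotient map remains a $\dcal$-quotient map.
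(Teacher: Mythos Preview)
Your proposal is correct and follows essentially the same approach as the paper: the same cyclic induction $(1)_p \Rightarrow (2)_p \Rightarrow (3)_p \Rightarrow (1)_{p+1}$, the same reduction of $(2)_p$ to the cone deformation $h$ via the identification $\Delta^p_{\hat p}\cong \Delta^p_{\mathrm{cone}}-\Delta^{p-1}_{(p)}$, the same case split on $i$ for $(3)_p$, and the same factorization $\lambda = \kappa\circ(s^p,\mu)$ together with the $\dcal$-quotient argument for $\kappa$ in Step~3. The only cosmetic difference is that in Step~2 you invoke the final structure for $\{\varphi_i\}$ on $\Delta^{p+1}$ directly, whereas the paper phrases it as smoothness on each open piece $\Delta^{p+1}_{\hat i}$ via Lemma~\ref{halfopen}; these are equivalent.
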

	
	We use the  smoothness of the affine surjections $s^k$ to show the following lemma.
	\begin{lem}\label{cone ext}
		Let $g : \Delta^{p-1} \longrightarrow \Delta^{q}$ be a smooth map. For $i, j$ with $0\leq i \leq p$ and $0\leq j \leq q$, define the map $g\,\hat{}_{ij}  :\Delta^{p}_{\hat{i}}\longrightarrow \Delta^q$ by $g{\,\hat{}}_{ij}((1-t)(i)+td^i(x)) = (1-t)(j) + tg(x)$. Then $g{\,\hat{}}_{ij}$ is smooth.
		\begin{proof}[Proof.]
			We can assume that $i = 0$ and $j = 0$ (Lemma \ref{permutation}). Thus, we show that the map $g{\,\hat{}}_{00} :\Delta^{p}_{\hat{0}} \longrightarrow \Delta^q$ is smooth.
			\par\indent
			Define the map
			$g^{\Delta}_{00} : \Delta^{p}_{\hat{0}} \longrightarrow \Delta^{q+1}_{\hat{0}}$
			by the commutativity of the diagram
			\begin{center}
				\begin{tikzcd}[column sep=4em]
					\Delta^{p-1}\times [0, 1)  \arrow{r}{g \times 1} \arrow[swap]{d}{\varphi_0}  &\Delta^{q}\times [0,1) \arrow{d}{\varphi_0}\\
					\Delta^{p}_{\hat{0}} \arrow{r}{g^{\Delta}_{00}}  & \Delta^{q+1}_{\hat{0}}.
				\end{tikzcd}
			\end{center}
			Since $\varphi_{0}\circ(g \times 1)$ is smooth and the left vertical arrow $\varphi_{0}$ is final (Lemma \ref{halfopen}), $g^{\Delta}_{00}$ is smooth. We can easily see that the composite
			$$
			\Delta^{p}_{\hat{0}} \xrightarrow{\ \ \ \ g^{\Delta}_{00}\ \ \ \ } \Delta^{q+1}_{\hat{0}} \xrightarrow{\ \ \ \ \ s^0 \ \ \ \ \ } \Delta^q
			$$
			is just the map $g{\,\hat{}}_{00}$, and hence, that $g{\,\hat{}}_{00}$ is smooth by Lemma \ref{sp}.
		\end{proof}	
		
		\begin{proof}[Proof of Proposition \ref{Axiom 2}]
			
			We prove the result by induction on $p$. The result obviously holds for $p=0$. Suppose that the result holds up to $p-1$. We show that $f$ is smooth on $\Delta^{p}_{\hat{i}}$ for any $i$. There are three cases to be checked.\\
			{\sl Case 1: The image $f((i))$ is in $\mathring{\Delta}^q$}. Consider the commutative diagram
			\begin{center}
				\begin{tikzcd}
					\Delta^{p}_{\hat{i}}  \arrow{r}{f} \arrow[swap]{d}{id}  &\mathring{\Delta}^q \arrow{d}{id}\\
					\Delta^{p}_{\hat{i} \ \mathrm{sub}} \arrow{r}{f}  & \ \mathring{\Delta}^{q}_{\text{sub}},
				\end{tikzcd}
			\end{center}
			and note that the left vertical arrow is smooth (Lemma \ref{firstproperty}) and that the right vertical arrow is a diffeomorphism (Lemma \ref{secondproperty}). Since $f :\Delta^{p}_{\mathrm{\text{sub}}} \longrightarrow \Delta^{q}_{\text{sub}}$ is obviously smooth, $f  :\Delta^{p}_{\hat{i}} \longrightarrow \mathring{\Delta}^q$ is also smooth.\vspace{0.08cm}\\
			{\sl Case 2: The image $f((i))$ is a vertex of $\Delta^q$.} The composite
			$$
			\Delta^{p-1} \xhookrightarrow{\ \ \ \ d^i\ \ \ \ } \Delta^{p} \xrightarrow{\ \ \ \ f\ \ \ \ } \Delta^{q}
			$$
			is smooth by the induction hypothesis. Set $(j) = f((i))$ and consider the smooth map $(f \circ d^i)\hat{}_{ij} : \Delta^{p}_{\hat{i}} \longrightarrow \Delta^q$ (Lemma \ref{cone ext}). We can easily see that $(f \circ d^i)\hat{}_{ij}$ is just $f|_{\Delta^{p}_{\hat{i}}}$.\vspace{0.08cm}\\
			{\sl Case 3: The image $f((i))$ is in some open simplex $(i_0,\ldots,i_k)$ with $0<k<q$.} We show that the $\mathring{\Delta}^k$- and $\Delta^{q-k}_{\hat{0}}$-components of the composite $\Delta^{p}_{\hat{i}} \xrightarrow{f} U_I \xrightarrow[\cong]{\varPhi_{I}} \mathring{\Delta}^k \times \Delta^{q-k}_{\hat{0}}$ are smooth, where $I = \{i_0,\ldots,i_k \}$ (see Proposition \ref{goodnbd}).
			\par\indent
			Since the composite
			\[
			\Delta^{p}_{\hat{i} \ \mathrm{sub}} \xrightarrow{\ \ \ f \ \ \ } U_{I \ \mathrm{sub}} \xrightarrow{p_{\mathring{\Delta}^{k}}\circ \varPhi_{I}} \mathring{\Delta}^{k}_{\mathrm{sub}}
			\]
			is smooth (Remark \ref{gn}(2)), the composite
			\[
			\Delta^{p}_{\hat{i}} \xrightarrow{\ \ \ f \ \ \ } U_{I} \xrightarrow{p_{\mathring{\Delta}^{k}}\circ \varPhi_{I}} \mathring{\Delta}^{k}
			\]
			is also smooth by the argument used in Case 1.
			The $\Delta^{q-k}_{\hat{0}}$-component of $\varPhi_{I}\circ f$ is smooth by Remark \ref{gn}(1) and Case 2.
		\end{proof} 
	\end{lem}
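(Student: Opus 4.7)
The plan is to induct on $p$, the case $p=0$ being trivial. For the inductive step, since $\Delta^{p}$ carries the final diffeology for $\varphi_{0}, \ldots, \varphi_{p}$, it suffices to show that each composite $f \circ \varphi_{i} : \Delta^{p-1} \times [0,1) \longrightarrow \Delta^{q}$ is smooth. By affineness,
\[
(f \circ \varphi_{i})(x, t) = (1-t)\,v + t\, g(x),
\]
where $v := f((i))$ and $g := f \circ d^{i}$ is smooth by the induction hypothesis. So everything reduces to showing that for a smooth $g : \Delta^{p-1} \to \Delta^{q}$ and a point $v \in \Delta^{q}$, the map $H(x,t) := (1-t)v + tg(x)$ from $\Delta^{p-1} \times [0,1)$ to $\Delta^{q}$ is smooth.

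I would then split according to the location of $v$. If $v \in \mathring{\Delta}^{q}$, then $H$ takes values in $\mathring{\Delta}^{q}$ for $t<1$, and by Lemma \ref{secondproperty} the diffeology on $\mathring{\Delta}^{q}$ agrees with that inherited from $\rbb^{q+1}$; the coordinate functions of $H$ are visibly smooth in ambient coordinates, and smoothness as a map into $\Delta^{q}$ follows via Lemma \ref{firstproperty}. If $v$ lies in a positive-dimensional open face $(i_{0},\ldots,i_{k})$ with $0<k<q$, the image of $H$ lands in the good neighborhood $U_{I}$, and I would compose with the diffeomorphism $\varPhi_{I} : U_{I} \xrightarrow{\cong} \mathring{\Delta}^{k} \times \Delta^{q-k}_{\hat{0}}$ of Proposition \ref{goodnbd} and check the two factor components separately: the $\mathring{\Delta}^{k}$-component is handled as in the interior case, while the $\Delta^{q-k}_{\hat{0}}$-component is, by Remark \ref{gn}(1), the restriction of an affine map $\Delta^{p} \to \Delta^{q-k}$ sending $(i)$ to the vertex $(0)$, which reduces to the vertex case below.

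The genuinely delicate case is $v=(j)$ a vertex. Lifting $(x,t) \mapsto (1-t)(j) + tg(x)$ naively through the quotient $\varphi_{j} : \Delta^{q-1} \times [0,1) \to \Delta^{q}_{\hat{j}}$ fails where $g(x)=(j)$, so I would instead factor the map through a degeneracy $s^{0} : \Delta^{q+1} \to \Delta^{q}$: there is a ``prepend a zero'' lift $g^{\Delta} : \Delta^{p}_{\hat{i}} \to \Delta^{q+1}_{\hat{0}}$, defined by $g^{\Delta} \circ \varphi_{i} = \varphi_{0} \circ (g \times 1)$, which is smooth by the finality of $\varphi_{i}$ (Lemma \ref{halfopen}), and post-composing with $s^{0}$ recovers the desired map. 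The main obstacle, therefore, is establishing smoothness of the degeneracies $s^{k}$ themselves. I do not expect this to be formal; the natural approach is a simultaneous induction on three interlocking statements: (i) the inductively defined diffeology on $\Delta^{p}$ agrees with the cone quotient diffeology obtained by viewing $\Delta^{p}$ as a cone on the face opposite $(p)$; (ii) the cone deformation $h : \Delta^{p} \times [0,1) \to \Delta^{p}_{\hat{p}}$ is smooth; (iii) $s^{p}$ is smooth. Each implication should follow from an explicit $\dcal$-quotient diagram of the kind used in Lemma \ref{secondproperty} and Lemma \ref{halfopen}, with the computational heart being smoothness of a few rational reparametrizations on simplices.
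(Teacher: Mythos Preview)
Your proposal is correct and follows essentially the same route as the paper. The three-case split on the location of $v=f((i))$, the use of $\mathring{\Delta}^{q}=\mathring{\Delta}^{q}_{\mathrm{sub}}$ in the interior case, the good-neighborhood decomposition $\varPhi_{I}$ in the intermediate case, and the factorization through $s^{0}$ via the lift $g^{\Delta}$ in the vertex case all match the paper exactly; your outline of the ``main obstacle'' (smoothness of $s^{k}$ via a simultaneous induction on the cone diffeology, the deformation $h$, and $s^{p}$) is precisely the content of the paper's Lemma~\ref{sp}.
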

	
	\begin{rem}\label{simplexofdelta}
		$(1)$ From Proposition \ref{Axiom 2}, we see that for $I = \{i_{0}, \ldots, i_{k} \} \subset \{0, \ldots, p \}$, $\Delta^{I}$ is just the closed simplex $\langle i_{0}, \ldots, i_{k} \rangle$ of $\Delta^{p}$ endowed with the sub-diffeology (see Section 4.2).\\
		$(2)$ By Proposition \ref{Axiom 2}, the realization functor $|\ |_{\dcal} : \scal \longrightarrow \dcal$ and the singular functor $S^{\dcal} :\dcal \longrightarrow \scal$ are defined as explained in Section 1. It is easily seen that the pair
		$$
		|\ |_{\dcal}: \scal \rightleftarrows \dcal: S^{\dcal}
		$$
		is an adjoint pair.
	\end{rem}
	\section{Key constructions for the verifications of Axioms 3 and 4}
	In this section, we construct important $\dcal$-homotopies, which are key to verifying Axioms 3 and 4.
	\par\indent
	Define the diffeological subspace $\Lambda^{p+1}_{\hat{k}}$ of $\Lambda^{p+1}_k$ by $\Lambda_{\hat{k}}^{p+1}=\Lambda^{p+1}_k-\dot{\Delta}^p_{(k)}$. 
	\begin{lem}\label{key}
		$\Lambda^{p+1}_{\hat{k}}$ is a deformation retract of $\Delta^{p+1}_{\hat{k}}$ in $\dcal$.
		\begin{proof}
			For simplicity, we assume that $k=0$ (Lemma \ref{permutation}). We construct $\mathcal{D}$-deformations $R_{p+1} :\Delta^{p+1}_{\hat{0}}\times I\longrightarrow\Delta^{p+1}_{\hat{0}}$ onto $\Lambda^{p+1}_{\hat{0}}$ by induction on $p$.
			\indent\par
			For $p=0$, we can define the $\mathcal{D}$-deformation 
			$R_1  :\Delta^{1}_{\hat{0}}\times I\longrightarrow\Delta^{1}_{\hat{0}}$
			by $$R_1((1-t)(0)+t(1),s)=(1-(1-s)t)(0)+(1-s)t(1).$$
			Suppose that $\mathcal{D}$-deformations $R_1, \ldots,R_p$ have been constructed. We construct a $\mathcal{D}$-deformation $R_{p+1}  :\Delta^{p+1}_{\hat{0}}\times I\longrightarrow\Delta^{p+1}_{\hat{0}}$ onto $\Lambda^{p+1}_{\hat{0}}$ in three steps.\vspace{0.2cm}\\
			{\sl Step 1: Construction of $R_{q,r}$ $(q\geq0,r\leq p)$}. Let $r$ $\leq$ $p$. For $q>0$, define the $\dcal$-homotopy $$R_{q,r}  :\mathring{\Delta}^q\times\Delta^{r}_{\hat{0}}\times I \longrightarrow\mathring{\Delta}^q\times\Delta^{r}_{\hat{0}}$$ by $R_{q,r}(x,y,s)=(x,R_{r}(y,\phi_q(x)s))$, where $\phi_q  :\Delta^q\longrightarrow[0,1]$ is a smooth function such that $\phi_q\equiv1$ near the barycenter $b_q$ of $\Delta^{q}$, and $\phi_q\equiv0$ near $\dot{\Delta}^q$ (Lemma \ref{firstproperty}). Define the $\dcal$-homotopy 
			$R_{0, r} : \Delta^{r}_{\hat{0}} \times I \longrightarrow \Delta^{r}_{\hat{0}}$ by $R_{0, r} = R_{r}$.\vspace{0.2cm}\\
			{\sl Step 2: Construction of $R_{A^p}$.} Set $A^p=\Delta^p-B^p$, where $B^p$ is a closed $p$-dimensional disk contained in $\mathring{\Delta}^p$ whose center is the barycenter $b_p$ of $\Delta^p$. Then, we deform $A^p$ onto $\dot{\Delta}^p$ as follows; see Definition \ref{gnbd} for a good neighborhood of an open simplex $(i_0, \ldots, i_k)$ of $\Delta^p$.
			\begin{itemize}
				\item[$\bullet$]
				We deform $A^p$ by applying $R_{p-1,1}$ to good neighborhoods of the open $(p-1)$-simplices of $\Delta^p$; the resulting diffeological subspace is denoted by $\dot{\Delta}^p\cup D^{(1)}$, where $D^{(1)}$ is a neighborhood of the $(p-2)$-skeleton $\mathrm{sk}_{p-2}\ \Delta^p$ which is $\czero$-homotopic to $\mathrm{sk}_{p-2}\ \Delta^p$.
				
				\item[$\bullet$]
				We deform $\dot{\Delta}^p\cup D^{(1)}$ by applying $R_{p-2,2}$ to good neighborhoods of the open $(p-2)$-simplices of $\Delta^p$; the resulting diffeological subspace is denoted by $\dot{\Delta}^p\cup D^{(2)}$, where $D^{(2)}$ is a neighborhood of the $(p-3)$-skeleton sk$_{p-3} \ \Delta^p$ which is $\czero$-homotopic to $\mathrm{sk}_{p-3}\ \Delta^p$.
				
				\item[$\bullet$]
				We iterate the same procedure using $R_{p-k,k}$ $(k=3,\ldots,p)$ to deform $\dot{\Delta}^p\cup D^{(2)}$ onto $\dot{\Delta}^p$.
			\end{itemize}
			\begin{center}
				\begin{tikzpicture}
				\draw[fill=gray!30] ({2*cos(-30)},{2*sin(-30)}) -- ({2*cos(90)},{2*sin(90)}) --  ({2*cos(-150)},{2*sin(-150)}) -- ({2*cos(-30)},{2*sin(-30)});
				\node at (1.5,0) [right] {$\longrightarrow$};
				\draw [fill=white, dashed] (0,0) circle [radius=0.75];
				\end{tikzpicture}
				\begin{tikzpicture}
				\draw[fill=gray!30] ({2*cos(-30)},{2*sin(-30)}) -- ({2*cos(90)},{2*sin(90)}) --  ({2*cos(-150)},{2*sin(-150)}) -- ({2*cos(-30)},{2*sin(-30)});
				\node at (1.5,0) [right] {$\longrightarrow$};
				\draw [fill=white, dashed] (0,0) circle [radius=0.75];
				
				\draw[fill=white, white] ({cos(60)},{2*sin(-30)}) -- ({cos(60)},{2*sin(-15)}) -- ({cos(120)},{2*sin(-15)}) -- ({cos(120)},{2*sin(-30)}) -- ({cos(60)},{2*sin(-30)});
				\draw[dashed] ({cos(60)},{2*sin(-30)}) -- ({cos(60)},{2*sin(-15)});
				\draw[dashed] ({cos(120)},{2*sin(-30)}) -- ({cos(120)},{2*sin(-15)});
				
				\draw[rotate=240,fill=white,white]  ({cos(60)},{2*sin(-30)}) -- ({cos(60)},{2*sin(-15)}) -- ({cos(120)},{2*sin(-15)}) -- ({cos(120)},{2*sin(-30)}) -- ({cos(60)},{2*sin(-30)});
				\draw[dashed,rotate=240] ({cos(60)},{2*sin(-30)}) -- ({cos(60)},{2*sin(-15)});
				\draw[dashed,rotate=240] ({cos(120)},{2*sin(-30)}) -- ({cos(120)},{2*sin(-15)});

				\draw[rotate=120,fill=white,white]  ({cos(60)},{2*sin(-30)}) -- ({cos(60)},{2*sin(-15)}) -- ({cos(120)},{2*sin(-15)}) -- ({cos(120)},{2*sin(-30)}) -- ({cos(60)},{2*sin(-30)});
				\draw[dashed,rotate=120] ({cos(60)},{2*sin(-30)}) -- ({cos(60)},{2*sin(-15)});
				\draw[dashed,rotate=120] ({cos(120)},{2*sin(-30)}) -- ({cos(120)},{2*sin(-15)});
				\draw ({2*cos(-30)},{2*sin(-30)}) -- ({2*cos(90)},{2*sin(90)}) --  ({2*cos(-150)},{2*sin(-150)}) -- ({2*cos(-30)},{2*sin(-30)});
				\end{tikzpicture}
				\begin{tikzpicture}
				\draw ({2*cos(-30)},{2*sin(-30)}) -- ({2*cos(90)},{2*sin(90)}) --  ({2*cos(-150)},{2*sin(-150)}) -- ({2*cos(-30)},{2*sin(-30)});
				\node at (1.5,0) [right] {};
				\end{tikzpicture}
				\\
				{\bf Fig. 6.1} $\dcal$-deformation of $A^{2}$ onto $\dot{\Delta}^2$
			\end{center}
			The $\mathcal{D}$-deformation of $A^p$ onto $\dot{\Delta}^p$ described above is denoted by $R_{A^p}$.\vspace{0.2cm}\\
			{\sl Step 3: Construction of $R_{p+1}$.} In the construction of $R_{A^p}$, we take a closed $p$-dimensional disk $B^p$ in $\mathring{\Delta}^p$ such that $B^p$ is contained in the interior $\phi^{-1}_p(1)^{\circ}$ of $\phi^{-1}_p(1)$.
			Define the $\mathcal{D}$-deformation $R_{A^p\times [0,1)}  :A^p\times[0,1)\times I\longrightarrow A^p\times [0,1)$ of $A^p\times[0,1)$ onto $\dot{\Delta}^p\times[0,1)$ to be the composite
			$$A^p\times[0,1)\times I \cong A^p\times I \times [0,1)\xrightarrow{R_{A^p}\times 1}A^p\times[0,1).$$
			Let us construct the desired $\mathcal{D}$-deformation $R_{p+1}$ using $R_{p,1}$ and $R_{A^p\times[0,1)}$.
			\par				
			\if0
			\indent Choose sufficiently small $\epsilon>0$ and a monotone increasing smooth function $\lambda:[0,1]\longrightarrow[0,1]$ such that $\lambda\equiv0$ on $[0,\epsilon]$, $\lambda(x)=x$ on $[2\epsilon,1-2\epsilon]$, and $\lambda\equiv1$ on $[1-\epsilon,1]$. Then, under the canonical identification of $[0, 1]$ and $\Delta^1$, we define the smooth map $Q_{p+1}':\phi^{-1}_p(1)^{\circ}\times[0,1)\times I\longrightarrow\Delta^p\times[0,1)$ by
			$$
			Q_{p+1}'(x,t,s)=
			\begin{cases}
			R_{p,1}(x,t,\lambda(2s)) & 0\leq s \leq \frac{1}{2}\\
			(x,0) & \frac{1}{2} \leq s \leq 1,
			\end{cases}
			$$
			and define the smooth map $Q_{p+1}'':A^p\times[0,1)\times I\longrightarrow\Delta^p\times[0,1)$ by
			$$
			Q_{p+1}''(x,t,s)=
			\begin{cases}
			R_{p,1}(x,t,\lambda(2s)) & 0\leq s \leq \frac{1}{2}\\
			R_{A^p\times[0,1)}(r_{p,1}(x,t),\lambda(2s-1)) & \frac{1}{2} \leq s \leq 1,
			\end{cases}
			$$
			where $r_{p,1}\coloneqq R_{p,1}(\cdot,1)$. Since the composites of 
			$Q'_{p+1}$ and $Q''_{p+1}$ with $\varphi_{0}: \Delta^p \times [0, 1) \longrightarrow \Delta^{p+1}_{\hat{0}}$ coincide on $(\phi_{p}^{-1}(1)^{\circ}\cap A^p)\times[0,1)\times I$, we can define the smooth map $Q_{p+1}:\Delta^p\times[0,1)\times I\longrightarrow\Delta^{p+1}_{\hat{0}}$ by $Q_{p+1}=Q_{p+1}'+Q_{p+1}''$. Define the map $R_{p+1}:\Delta^{p+1}_{\hat{0}}\times I\longrightarrow\Delta^{p+1}_{\hat{0}}$ by the commutativity of the diagram
			$$
			\begin{tikzcd}
			\Delta^p\times [0,1)\times I \arrow[d,"\varphi_{0}\times 1",swap] \arrow{dr}{Q_{p+1}} &\\
			\Delta^{p+1}_{\hat{0}} \times I \arrow[r,"R_{p+1}"] & \Delta^{p+1}_{\hat{0}}.
			\end{tikzcd}
			$$
			Since $\varphi_0\times1$ is final (cf. Lemma \ref{}), $R_{p+1}$ is a smooth map, and hence, the desired $\mathcal{D}$-deformation of $\Delta^{p+1}_{\hat{0}}$ onto $\Lambda^{p+1}_{\hat{0}}$.
			\fi
			First, under the canonical identification of $[0, 1)$ and $\Delta^1_{\hat{0}}$, we define the $\dcal$-homotopy
			$$
			P'_{p+1}: \Delta^p \times [0, 1) \times I \longrightarrow \Delta^p \times [0, 1)
			$$
			to be the obvious extension of $R_{p,1} : \mathring{\Delta}^p \times [0, 1) \times I \longrightarrow \mathring{\Delta}^p \times [0, 1);$ explicitly,
			$$
			P_{p+1}'(x,t,s)=
			\begin{cases}
			R_{p,1}(x,t,s) & \text{if}\, \ x \in \mathring{\Delta}^p\\
			(x, t) & \text{if} \, \ x \in \dot{\Delta}^p.
			\end{cases}
			$$
			Then, we define the map
			$$
			P_{p+1}: \Delta^{p+1}_{\hat{0}} \times I \longrightarrow \Delta^{p+1}_{\hat{0}}
			$$
			by the commutativity of the diagram
			\begin{center}
				\begin{tikzcd}
					\Delta^{p} \times [0, 1) \times I  \arrow{r}{P'_{p+1}} \arrow[swap]{d}{\varphi_0 \times 1}  &\Delta^p \times [0, 1) \arrow{d}{\varphi_0}\\
					\Delta^{p+1}_{\hat{0}}\times I \arrow{r}{P_{p+1}}  & \Delta^{p+1}_{\hat{0}}.
				\end{tikzcd}
			\end{center}
			Since $\varphi_0 \times 1$ is final (Lemmas \ref{halfopen} and \ref{times X}), $P_{p+1}$ is a $\dcal$-homotopy.
			\par\indent
			Second, we define the $\dcal$-homotopies
			$$
			Q'_{p+1}: \phi^{-1}_{p}(1)^{\circ} \times [0, 1) \times I \longrightarrow \Delta^{p} \times [0, 1), 
			$$
			$$
			Q''_{p+1}: A^{p} \times [0, 1) \times I \longrightarrow \Delta^{p} \times [0, 1)
			$$
			by
			$
			Q'_{p+1}(x, t, s) = (x, 0),
			$
			$
			Q''_{p+1}(x, t, s) = R_{A^{p}\times [0, 1)}(P'_{p+1}(x, t, 1), s)
			$ respectively. Consider the composites of $Q'_{p+1}$ and $Q''_{p+1}$ with $\,\varphi_{0}: \Delta^{p} \times [0, 1) \longrightarrow \Delta^{p+1}_{\hat{0}}$, and observe that
			\[
			\varphi_{0} \circ Q'_{p+1} = \varphi_{0} \circ Q''_{p+1} = \text{the constant map onto the vertex (0)}
			\] 
			on $\phi^{-1}_{p}(1)^{\circ} \times [0, 1) \times I \cap A^{p} \times [0, 1) \times I$. Then, we have the $\dcal$-homotopy
			$$
			\varphi_{0}\circ Q'_{p+1} + \varphi_{0} \circ Q''_{p+1}: \Delta^p \times [0, 1) \times I \longrightarrow \Delta^{p+1}_{\hat{0}}.
			$$
			Define the map $Q_{p+1} : \Delta^{p+1}_{\hat{0}} \times I \longrightarrow \Delta^{p+1}_{\hat{0}}$ by the commutativity of the diagram
			
			\begin{center}
				\begin{tikzcd}
					\Delta^{p} \times [0, 1) \times I  \arrow{dr}{\varphi_{0} \circ Q'_{p+1} + \varphi_{0} \circ Q''_{p+1}} \arrow[swap]{d}{\varphi_0 \times 1} \\
					\Delta^{p+1}_{\hat{0}}\times I \arrow{r}{Q_{p+1}}  & \Delta^{p+1}_{\hat{0}}.
				\end{tikzcd}
			\end{center}
			Since $\varphi_{0} \times 1$ is final (Lemmas \ref{halfopen} and \ref{times X}), $Q_{p+1}$ is a $\dcal$-homotopy connecting $P_{p+1}(\cdot, 1)$ to a retraction of $\Delta^{p+1}_{\hat{0}}$ onto $\Lambda^{p+1}_{\hat{0}}$.
			\par\indent
			Last, define the $\dcal$-homotopy
			$$
			R_{p+1}: \Delta^{p+1}_{\hat{0}} \times I \longrightarrow \Delta^{p+1}_{\hat{0}}
			$$
			to be the composite of the $\dcal$-homotopies $P_{p+1}$ and $Q_{p+1}$ (see Remark \ref{Dhomotopy}). Then, $R_{p+1}$ is the desired $\dcal$-deformation.
		\end{proof}
	\end{lem}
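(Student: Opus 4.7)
The plan is to construct the smooth deformation $R_{p+1} : \Delta^{p+1}_{\hat{k}} \times I \longrightarrow \Delta^{p+1}_{\hat{k}}$ by induction on $p$, reducing to the case $k = 0$ via Lemma \ref{permutation}. The base case $p = 0$ is immediate: under the affine identification $\Delta^{1} \cong [0,1]$, the half-open simplex $\Delta^{1}_{\hat{0}}$ corresponds to $[0,1)$ and $\Lambda^{1}_{\hat{0}} = \{(0)\}$, so the obvious linear contraction is smooth.

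For the inductive step, the key difficulty is that the diffeology of $\Delta^{p+1}$ is strictly finer than its sub-diffeology near the lower-dimensional skeleton (Lemma \ref{secondproperty}), so a naive affine contraction fails to be smooth as a map into $\Delta^{p+1}_{\hat{0}}$; every step must be compatible with the final structure inherited from the charts $\varphi_{i}$. My strategy is twofold: exploit the local diffeomorphism $\varPhi_{I} : U_{I} \xrightarrow{\cong} \mathring{\Delta}^{k} \times \Delta^{p-k}_{\hat{0}}$ of Proposition \ref{goodnbd} to port the inductive retraction onto good neighborhoods of each open face of $\Delta^{p}$, and then push the whole picture through the $\dcal$-quotient map $\varphi_{0} : \Delta^{p} \times [0,1) \longrightarrow \Delta^{p+1}_{\hat{0}}$ of Lemma \ref{halfopen}.

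Concretely, I would first produce local homotopies $R_{q,r}$ on $\mathring{\Delta}^{q} \times \Delta^{r}_{\hat{0}}$ (for $r \leq p$) by running the inductive retraction $R_{r}$ in the second factor, damped by a smooth bump $\phi_{q} : \Delta^{q} \longrightarrow [0,1]$ equal to $1$ near the barycenter and $0$ near $\dot{\Delta}^{q}$ (Lemma \ref{firstproperty}); via $\varPhi_{I}$ these transport to smooth homotopies supported in good neighborhoods of the open faces of $\Delta^{p}$ and extend by the identity outside. Iterating $R_{p-k,k}$ over good neighborhoods of the open $(p-k)$-simplices for $k = 1, 2, \ldots, p$, I deform $A^{p} := \Delta^{p} - B^{p}$ onto $\dot{\Delta}^{p}$, where $B^{p}$ is a closed disk about the barycenter on which $\phi_{p} \equiv 1$. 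Crossing with $[0,1)$ yields an auxiliary $\dcal$-homotopy $R_{A^{p} \times [0,1)}$ of $A^{p} \times [0,1)$ onto $\dot{\Delta}^{p} \times [0,1)$.

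The main obstacle is the final assembly. On $\Delta^{p} \times [0,1)$ I would define a $\dcal$-homotopy $P'_{p+1}$ that runs $R_{p,1}$ on $\mathring{\Delta}^{p} \times [0,1)$ and is the identity on $\dot{\Delta}^{p} \times [0,1)$, together with a second $\dcal$-homotopy $Q''_{p+1}$ built from $R_{A^{p} \times [0,1)}$ that collapses the remaining interior. On the overlap $\phi_{p}^{-1}(1)^{\circ} \cap A^{p}$, after composition with $\varphi_{0}$ both coincide with the constant map to the vertex $(0)$, so $\varphi_{0} \circ P'_{p+1}$ and $\varphi_{0} \circ Q''_{p+1}$ glue into a single smooth map on $\Delta^{p} \times [0,1) \times I$. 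Since $\varphi_{0} \times 1$ is final by Lemmas \ref{halfopen} and \ref{times X}, this map descends to $\dcal$-homotopies $P_{p+1}$ and $Q_{p+1}$ on $\Delta^{p+1}_{\hat{0}} \times I$, and their concatenation (smoothly reparametrized, see Remark \ref{Dhomotopy}) is the desired deformation $R_{p+1}$ of $\Delta^{p+1}_{\hat{0}}$ onto $\Lambda^{p+1}_{\hat{0}}$.
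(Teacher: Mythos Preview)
Your proposal is correct and follows essentially the same route as the paper: the inductive construction of the damped homotopies $R_{q,r}$, the iterative deformation $R_{A^p}$ of $\Delta^p - B^p$ onto $\dot{\Delta}^p$ via good neighborhoods, the two-stage homotopy $P_{p+1}$ followed by $Q_{p+1}$, and the descent through the final map $\varphi_0 \times 1$ are exactly the paper's ingredients. One small slip in your final paragraph: the spatial gluing that defines $Q_{p+1}$ is between $\varphi_0 \circ Q''_{p+1}$ on $A^p$ and the constant map $\varphi_0 \circ Q'_{p+1}$ (with $Q'_{p+1}(x,t,s)=(x,0)$) on $\phi_p^{-1}(1)^\circ$, not with $\varphi_0 \circ P'_{p+1}$; the homotopy $P_{p+1}$ is already globally defined and is then concatenated with $Q_{p+1}$ in the time variable.
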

	
	From the proof of Lemma \ref{key}, we have the following result.
	
	\begin{prop}\label{deformontbdry}
		$\dot{\Delta}^p$ is a deformation retract of $\Delta^p - \{b_p\}$ in $\dcal$, where $b_p$ is the barycenter of $\Delta^p$.
		\begin{proof}
			Let $B^p$ be a small $p$-dimensional disk in $\Delta^p$ centered at $b_p$. Since $\Delta^p - \{b_p\}$ can be deformed into $\Delta^p - B^p$, $\Delta^p - \{b_p\}$ is deformed onto $\dot{\Delta}^p$ via the $\mathcal{D}$-deformation $R_{A^p}$ constructed in the proof of Lemma \ref{key}.
		\end{proof}
	\end{prop}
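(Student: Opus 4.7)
My plan is to exhibit a $\dcal$-deformation of $\Delta^p-\{b_p\}$ onto $\dot{\Delta}^p$ by concatenating two halves, leveraging the construction of $R_{A^p}$ already assembled inside the proof of Lemma \ref{key}. The first half will push $\Delta^p-\{b_p\}$ radially outward from $b_p$ onto $A^p=\Delta^p-B^p$, where $B^p$ is a closed Euclidean $p$-disk centered at $b_p$ and contained in $\phi_p^{-1}(1)^{\circ}\subset \mathring{\Delta}^p$; the second half is then $R_{A^p}$ itself, which is already a $\dcal$-deformation of $A^p$ onto $\dot{\Delta}^p$ fixing $\dot{\Delta}^p$ pointwise.

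Concretely, I would fix a slightly larger disk $B^p_1$ still contained in $\mathring{\Delta}^p$ and a smooth cut-off function $\beta \colon [0,\infty)\longrightarrow [0,\infty)$ that is $\geq r_0$ (the radius of $B^p$) on $[0,r_0]$, equals the identity on $[r_1,\infty)$ (where $r_1$ is the radius of $B^p_1$), and is monotone. Using the affine identification of $\mathring{\Delta}^p$ with an open subset of the affine $p$-space, define
\[
\rho(x,s) = b_p + \bigl((1-s)\,|x-b_p| + s\,\beta(|x-b_p|)\bigr)\,\frac{x-b_p}{|x-b_p|}
\]
for $x\neq b_p$, and extend by the identity on the complement of $B^p_1$. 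This gives a map $\rho\colon (\Delta^p-\{b_p\})\times I\longrightarrow \Delta^p-\{b_p\}$ with $\rho(\cdot,0)=\mathrm{id}$ and $\rho(\cdot,1)$ landing in $A^p$. Finally, I would concatenate $\rho$ with $R_{A^p}$ via a smooth reparametrization of $I$ (cf. Remark \ref{Dhomotopy}) to produce a single smooth homotopy.

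The only verification required is $\dcal$-smoothness of $\rho$. By construction, $\rho$ is the identity on the open neighborhood $(\Delta^p-B^p_1)$ of $\dot{\Delta}^p$, so smoothness needs to be checked only on $\mathring{\Delta}^p-\{b_p\}$. But $\mathring{\Delta}^p\subset \Delta^p-\mathrm{sk}_{p-2}\,\Delta^p$, so Lemma \ref{secondproperty} identifies the $\dcal$-diffeology on $\mathring{\Delta}^p$ with the sub-diffeology of affine $p$-space, reducing $\dcal$-smoothness to ordinary Euclidean smoothness of the formula above, which is clear. Smoothness of $R_{A^p}$ is already established in Lemma \ref{key}.

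The main obstacle, modest as it is, is the bookkeeping needed to ensure that $\rho$ is genuinely the identity on a full open neighborhood of $\dot{\Delta}^p$: this is what sidesteps any delicate analysis along $\mathrm{sk}_{p-2}\,\Delta^p$, where the $\dcal$-diffeology and the sub-diffeology disagree (Remark \ref{skeleton}). Once the disks $B^p\subset B^p_1\subset \mathring{\Delta}^p$ are chosen with $B^p_1$ strictly inside $\mathring{\Delta}^p$, this localization is automatic, and the proof reduces to concatenating $\rho$ with $R_{A^p}$.
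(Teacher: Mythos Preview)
Your proposal is correct and follows essentially the same two-step approach as the paper: first deform $\Delta^p-\{b_p\}$ into $A^p=\Delta^p-B^p$, then apply $R_{A^p}$. The paper merely asserts the first step without details, while you supply an explicit radial push $\rho$ and correctly justify its $\dcal$-smoothness by making it the identity outside $B^p_1\subset\mathring{\Delta}^p$ and invoking Lemma~\ref{secondproperty} on the interior; one cosmetic point is that you should take $\beta>r_0$ strictly on $[0,r_0]$ (or shrink $B^p$ slightly) so that $\rho(\cdot,1)$ lands in the open set $A^p$ rather than on the boundary sphere of the closed disk $B^p$.
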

	We end this section with a simple lemma on $\dcal$-retracts (i.e., retracts in the category $\dcal$) and $\dcal$-deformation retracts, which is used in the succeeding sections.
	
	\begin{lem}\label{predefretr}
		Let $A$ be a diffeological subspace of $B$, and let $i$ be the inclusion of $A$ into $B$. Consider the pushout diagram in $\dcal$
		\begin{center}
			\begin{tikzcd}
				A  \arrow[hook]{r}{i} \arrow[swap]{d}{f}  & B \arrow{d}{g}\\
				C \arrow[hook,swap]{r}{j}  & B \underset{A}{\cup} C.
			\end{tikzcd}
		\end{center}
		$(1)$ If $A$ is a $\dcal$-retract of $B$, then $C$ is also a $\dcal$-retract of $B \underset{A}{\cup} C$.\\
		$(2)$ If $A$ is a $\dcal$-deformation retract of $B$, then $C$ is also a $\dcal$-deformation retract of $B \underset{A}{\cup} C$.
	\end{lem}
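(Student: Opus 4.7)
The plan is to prove both parts via the universal property of the pushout, using cartesian closedness to handle the homotopy in part (2). Note first that since $A$ is a diffeological subspace of $B$, the inclusion $i$ is a $\dcal$-embedding, so by Proposition \ref{category D}(1) the pushout map $j : C \longhookrightarrow B \underset{A}{\cup} C$ is also a $\dcal$-embedding, which means $C$ is a diffeological subspace and the notion of ``retract'' in $\dcal$ makes sense as stated.

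For part (1), I will use a retraction $r : B \longrightarrow A$ with $r \circ i = 1_A$ to construct a retraction $R : B \underset{A}{\cup} C \longrightarrow C$. The maps $f \circ r : B \longrightarrow C$ and $1_C : C \longrightarrow C$ agree on $A$, since
\[
(f \circ r) \circ i = f \circ 1_A = f = 1_C \circ f,
\]
so by the universal property of the pushout they induce a smooth map $R : B \underset{A}{\cup} C \longrightarrow C$ satisfying $R \circ j = 1_C$, which is the desired retraction.

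For part (2), let $H : B \times I \longrightarrow B$ be a $\dcal$-deformation of $B$ onto $A$. Since $\dcal$ is cartesian closed (Proposition \ref{category D}(2)), the functor $- \times I$ preserves colimits, so the canonical map
\[
(B \times I) \underset{A \times I}{\cup} (C \times I) \longrightarrow \bigl(B \underset{A}{\cup} C\bigr) \times I
\]
is a diffeomorphism. Hence a smooth map out of $(B \underset{A}{\cup} C) \times I$ is determined by smooth maps $B \times I \longrightarrow B \underset{A}{\cup} C$ and $C \times I \longrightarrow B \underset{A}{\cup} C$ agreeing on $A \times I$. I will take the maps $g \circ H$ and $j \circ \mathrm{proj}_C$; on $A \times I$ both evaluate to $(a, t) \longmapsto j(f(a))$, using that $H(a, t) = a$ on $A \times I$ and commutativity of the pushout square. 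This yields a smooth map $\widetilde{H} : (B \underset{A}{\cup} C) \times I \longrightarrow B \underset{A}{\cup} C$.

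It remains to verify that $\widetilde{H}$ is a $\dcal$-deformation of $B \underset{A}{\cup} C$ onto $C$. The identities $\widetilde{H}(g(b), 0) = g(H(b, 0)) = g(b)$ and $\widetilde{H}(j(c), 0) = j(c)$ give $\widetilde{H}(\cdot, 0) = 1$, since $g$ and $j$ are jointly surjective on the pushout. The identity $\widetilde{H}(j(c), t) = j(c)$ shows that $C$ is pointwise fixed. Finally, $\widetilde{H}(g(b), 1) = g(H(b, 1)) = g(i(H(b, 1))) = j(f(H(b, 1))) \in j(C)$ because $H(\cdot, 1)$ factors through $A$. No step here is a serious obstacle; the only subtle point is the commutation of $- \times I$ with pushouts, which is handled cleanly by cartesian closedness.
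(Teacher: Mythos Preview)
Your proof is correct and follows essentially the same approach as the paper: part (1) is called ``obvious'' in the paper, and for part (2) the paper also uses cartesian closedness to identify $(B \underset{A}{\cup} C) \times I$ with the pushout $B \times I \underset{A \times I}{\cup} C \times I$ and then defines the deformation as $g \circ H + j \circ \mathrm{proj}$. You simply spell out the verifications (that $j$ is a $\dcal$-embedding, and that the three deformation-retract conditions hold) that the paper leaves implicit.
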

	\begin{proof}
		$(1)$ Obvious.\\
		$(2)$ Let $R: B \times I \longrightarrow B$ be a $\dcal$-deformation of $B$ onto $A$. Since the diagram
		$$
		\begin{tikzcd}
		A \times I \arrow[hook]{r}{i \times 1} \arrow[swap]{d}{f \times 1}  
		&  B \times I  \arrow{d}{g \times 1}\\
		C \times I  \arrow[hook,swap]{r}{j \times 1}  
		&  (B\underset{A}{\cup}C) \times I
		\end{tikzcd}
		$$
		is a pushout diagram in $\dcal$ (Proposition \ref{category D}(2)), we have the smooth map
		$$
		(B\underset{A}{\cup}C) \times I = B \times I \underset{A \times I}{\cup} C \times I \xrightarrow{g \circ R + j \circ proj} B\underset{A}{\cup}C,
		$$
		which is the desired $\dcal$-deformation.			
	\end{proof}	
	\section{Verification of Axiom 3}
	In this section, we verify Axiom 3.
	\begin{prop}[\bf Axiom 3]\label{Axiom 3}
		The canonical smooth injection
		$$\left| \dot{\Delta}[p] \right|_{\dcal} \longhookrightarrow \Delta^p$$
		is a $\dcal$-embedding.
	\end{prop}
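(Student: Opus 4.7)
My plan is a double induction: an outer induction on the ambient dimension $p$ (base $p=0$ trivial), and within each $p$, an inner induction on the number of non-degenerate simplices of $K$ (base $K=\emptyset$ trivial). For the inductive step, let $\sigma$ be a maximal non-degenerate simplex of $K$ of dimension $n$, corresponding to $I\subseteq\{0,\ldots,p\}$ with $|I|=n+1$, and set $L:=K\setminus\{\sigma\}$. When $n=p$, necessarily $K=\Delta[p]$ and the assertion is immediate, so assume $n<p$. Taking realizations, one obtains the pushout square
\[
\begin{tikzcd}
|\partial\Delta[n]|_\dcal \arrow[r,hook] \arrow[d] & \Delta^n \arrow[d] \\
|L|_\dcal \arrow[r] & |K|_\dcal
\end{tikzcd}
\]
in $\dcal$, with the top arrow a $\dcal$-embedding by the outer induction (since $n<p$) and $|L|_\dcal\hookrightarrow\Delta^p$ a $\dcal$-embedding by the inner induction. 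The pushout closure in Proposition \ref{category D}(1) then makes $|L|_\dcal\hookrightarrow|K|_\dcal$ a $\dcal$-embedding, while $\Delta^n\cong\Delta^I\hookrightarrow\Delta^p$ is a $\dcal$-embedding by Remark \ref{simplexofdelta}(1).

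The remaining task is to check that the composite $|K|_\dcal\hookrightarrow\Delta^p$ is initial, which I would do by verifying that every plot $\gamma:V\to\Delta^p$ with image in $|K|_\dcal$ is a plot of $|K|_\dcal$. This is local on $V$, so I would fix $u\in V$ and split into three cases based on $\gamma(u)$. If $\gamma(u)\in\mathring\Delta^I$, the maximality of $\sigma$ in $K$ implies that a sufficiently small open $W\ni\gamma(u)$ in $\Delta^p$ satisfies $W\cap|K|_\dcal\subseteq\Delta^I$, and $\gamma$ factors smoothly through $\Delta^I\hookrightarrow|K|_\dcal$. If $\gamma(u)\notin\Delta^I$, the closedness of $\Delta^I$ gives a neighborhood of $u$ on which $\gamma(V_u)\subseteq|L|_\dcal$, and the inner induction supplies smoothness into $|L|_\dcal\hookrightarrow|K|_\dcal$. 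The delicate case is $\gamma(u)\in\dot\Delta^I$.

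For this boundary case, take $J_0\subsetneq I$ minimal with $\gamma(u)\in\mathring\Delta^{J_0}$, and set $k_0=|J_0|-1$. When $k_0\geq 1$, I would invoke Proposition \ref{goodnbd} to rewrite $|K|_\dcal\cap U_{J_0}$, under the diffeomorphism $\Phi_{J_0}$, as a product $\mathring\Delta^{k_0}\times M$, where $M=|\tilde K|_\dcal\cap\Delta^{p-k_0}_{\hat 0}$ for a subcomplex $\tilde K\subseteq\Delta[p-k_0]$ playing the role of the relabeled star of $J_0$ in $K$; since $p-k_0<p$, the outer induction applied to $\tilde K$ then provides the required smooth factorization of $\gamma$ through the generators of $|K|_\dcal$. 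When $k_0=0$, i.e.\ $J_0=\{j_0\}$ is a single vertex of $I$, $\Phi_{J_0}$ fails to reduce dimension; here I would instead lift $\gamma$ locally through the $\dcal$-quotient map $\varphi_{j_0}:\Delta^{p-1}\times[0,1)\to\Delta^p_{\hat{j_0}}$ of Lemma \ref{halfopen}, noting that the lift lands in $\varphi_{j_0}^{-1}(|K|_\dcal)=|K'|_\dcal\times[0,1)\cup\Delta^{p-1}\times\{0\}$, where $K'\subseteq\Delta[p-1]$ is the link of $j_0$ in $K$; the outer induction for $p-1<p$ then gives the embedding of $|K'|_\dcal$ needed to reconstruct $\gamma$ as a plot of $|K|_\dcal$.

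The principal obstacle I foresee is this vertex sub-case $k_0=0$, in which Proposition \ref{goodnbd} does not produce a dimension reduction and one must carefully compare the sub-diffeology inherited on $|K|_\dcal\cap\Delta^p_{\hat{j_0}}$ with the cone-type diffeology obtained via $\varphi_{j_0}$ on $|K'|_\dcal\times[0,1)$ after collapsing its base; matching these two structures is the technical heart of the argument.
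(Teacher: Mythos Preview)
Your overall architecture---double induction plus a plotwise case analysis---is a genuinely different route from the paper's. The paper does a single induction on $p$ and, for each vertex $i$, compares two global descriptions of $|K|_{\hat i}$: one coming from the colimit definition of $|K|_{\dcal}$ (via the cone isomorphism $K'\cong C\overline{K}$), and one coming from the sub-diffeology of $\Delta^p$ (via the final map $\varphi_0$). Your cases~1, 2, and the sub-case $k_0\geq 1$ of case~3 do go through: in the last of these, the outer induction applied to $\tilde K\subseteq\Delta[p-k_0]$ really does produce the local factorisation of $\gamma$ through a closed face $\Delta^{\hat\tau}\hookrightarrow\Delta^p$ with $\hat\tau\in K$, and that suffices.

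The gap is exactly where you locate it, the vertex sub-case $k_0=0$, and it is not merely technical. Lifting $\gamma$ through $\varphi_{j_0}$ gives a plot $\tilde\gamma=(\tilde\gamma_1,\tilde\gamma_2)$ landing in $|K'|\times[0,1)\cup\Delta^{p-1}\times\{0\}$, and at points where $\tilde\gamma_2=0$ the first component $\tilde\gamma_1$ may lie anywhere in $\Delta^{p-1}$; the outer induction, which only tells you that $|K'|_{\dcal}\hookrightarrow\Delta^{p-1}$ is an embedding, gives no control over such points. What is actually needed is a smooth retraction $\rho:V\to|K'|_{\Delta^{p-1}}$ from an \emph{open} neighbourhood $V\supseteq|K'|$ in $\Delta^{p-1}$: then, after shrinking so that $\tilde\gamma_1$ lands in $V$ near any point with $\tilde\gamma_1(u)\in|K'|$, the modified lift $(\rho\circ\tilde\gamma_1,\tilde\gamma_2)$ has the same $\varphi_{j_0}$-image as $\tilde\gamma$ and lands in $|K'|\times[0,1)$; at points with $\tilde\gamma_1(u)\notin\overline{|K'|}$ one checks $\tilde\gamma_2\equiv 0$ nearby, so $\gamma$ is locally constant. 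The existence of such a retraction is not formal---it is the content of the paper's Step~1, which builds $\rho$ by iterating the deformation of a punctured simplex onto its boundary (Proposition~\ref{deformontbdry}). Without this ingredient your argument does not close; with it, your plotwise strategy can be completed and becomes an alternative to the paper's more global cone comparison.
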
	
	Let us begin by showing the following three lemmas.	
	\begin{lem}\label{subspotcolim}
		Let $J$ be a small category and 
		$Y_{\bullet}  : J \longrightarrow \dcal$
		be a functor. Set 
		$Y = \underset{j\in J}{\mathrm{colim}} \ Y_j$, 
		and let $B$ be a diffeological subspace of $Y$. 
		Then, the equality $B = \underset{j\in J}{\mathrm{colim}} \ B_j$ holds in the category $\dcal$, where $B_j$ is the inverse image of $B$ by the canonical map 
		$\phi_j : Y_j \longrightarrow Y$.
		\begin{proof}
			\if0
			For a nonempty set $S$, $Set \downarrow S$ denotes the category of sets over $S$. Since the functor
			\[
			\cdot \underset{S}{\times} X: Set\downarrow S \longrightarrow Set\downarrow S
			\]
			has a right adjoint for any $X \in Set\downarrow S$, we can see that for any diagram $S_{\bullet}: J \longrightarrow Set$ and any map $f: X \longrightarrow S:= \underset{j\in J}{\mathrm{colim}} \ S_{j}$, $X = \underset{j\in J}{\mathrm{colim}} \ S_{j} \underset{S}{\times} X$ holds.
			\fi
			We can easily see that the diffeological spaces $B$ and $\underset{j\in J}{\mathrm{colim}} \ B_{j}$ coincide as sets (see the proof of Proposition \ref{category D}(1)). Thus, consider the pullback diagram in $\dcal$:
			\begin{center}
				\begin{tikzcd}
					\underset{j}{\coprod} \ B_j  \arrow[hook]{r} \arrow[swap]{d}{\scriptsize{\sum}\ \phi_j}  &\underset{j}{\coprod}\ Y_j \arrow{d}{\sum\phi_j}\\
					B \arrow[hook]{r}  & Y
				\end{tikzcd}
			\end{center}
			(see Lemma \ref{initial}). Since the right vertical arrow is final, Lemma \ref{initialfinal}(2) implies that the left vertical arrow is also final, and hence, $B = \underset{j\in J}{\mathrm{colim}}\,\,B_j$ holds in $\dcal$.		
		\end{proof}
	\end{lem}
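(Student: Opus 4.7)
The plan is to exploit the fact that colimits in $\dcal$ are realized as quotient diffeologies on the coproduct of the $Y_j$'s (Proposition \ref{category D}(1)), together with the pullback-preservation of $\dcal$-quotient maps proved in Lemma \ref{initialfinal}(2). The set-theoretic identification $B = \underset{j}{\mathrm{colim}}\,B_j$ is straightforward, because the underlying set functor $\dcal \longrightarrow Set$ creates colimits and pullbacks of subset-inclusions commute with colimits in $Set$. So the substance of the lemma is the claim that the two diffeologies on this common underlying set agree.

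To compare diffeologies, I would set up the commutative square
\begin{center}
\begin{tikzcd}
\underset{j}{\coprod} B_j \arrow[hook]{r} \arrow[swap]{d}{\sum \phi_j|_{B_j}} & \underset{j}{\coprod} Y_j \arrow{d}{\sum \phi_j}\\
B \arrow[hook]{r} & Y,
\end{tikzcd}
\end{center}
and first verify that it is a pullback in $\dcal$. This follows from Lemma \ref{initial}: it is evidently a pullback in $Set$, and the top horizontal arrow is a $\dcal$-embedding because each $B_j \longhookrightarrow Y_j$ is one (sub-diffeology pulled back through the initial morphism $B \longhookrightarrow Y$) and coproducts of $\dcal$-embeddings remain $\dcal$-embeddings by the construction of colimits in $\dcal$.

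Next, the right vertical arrow $\sum \phi_j$ is a $\dcal$-quotient map by the very definition of $Y = \underset{j}{\mathrm{colim}}\,Y_j$: the colimit diffeology on $Y$ is exactly the final structure for the family $\{\phi_j\}$. By Lemma \ref{initialfinal}(2), the pullback $\sum \phi_j|_{B_j} : \underset{j}{\coprod} B_j \longrightarrow B$ is therefore also a $\dcal$-quotient map. But a $\dcal$-quotient map out of $\underset{j}{\coprod} B_j$ equips its target with exactly the final structure for the family $\{\phi_j|_{B_j}\}$, which is the definition of the colimit diffeology on $\underset{j}{\mathrm{colim}}\,B_j$. Hence the diffeology of $B$ agrees with that of $\underset{j}{\mathrm{colim}}\,B_j$, proving the lemma.

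I do not anticipate a genuine obstacle: the entire argument is a clean transport of the pullback-preservation property of $\dcal$-quotient maps through the coproduct presentation of colimits. The only point requiring a moment of care is checking that the square above is actually a pullback in $\dcal$ (not merely in $Set$), which is where the fact that subspace inclusions are preserved under coproducts is used via Lemma \ref{initial}.
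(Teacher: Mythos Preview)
Your proposal is correct and follows essentially the same approach as the paper: both set up the square with $\coprod B_j \to \coprod Y_j$ over $B \to Y$, verify it is a pullback in $\dcal$ via Lemma~\ref{initial}, and then apply Lemma~\ref{initialfinal}(2) to conclude that the left vertical arrow is final. Your write-up is in fact slightly more explicit than the paper's in justifying why the top horizontal arrow is a $\dcal$-embedding (coproducts of $\dcal$-embeddings).
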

	Let $\Delta$ denote the category of finite ordinal numbers. Recall that $\scal = Set^{\Delta^{op}}$ and that the Yoneda embedding $\Delta \longhookrightarrow \scal$ assigns to $[p]$ the standard $p$-simplex $\Delta[p]$. For a subcomplex $K$ of $\Delta[p]$, define the category $(\Delta\downarrow K)_{inj}$ to be the full subcategory of $\Delta\downarrow K$ consisting of objects 
	$\sigma: \Delta[n] \longrightarrow K$ whose composite with $K \longhookrightarrow \Delta[p]$ correspond to injective monotonic maps via the Yoneda embedding. Define the subcategory $I_{K}$ of $\dcal$ to be 
	the image of the functor $(\Delta\downarrow K)_{inj} \longrightarrow \dcal$ which assigns to $\sigma: \Delta[n] \longrightarrow K$ to $\Delta^{n}$.
	\begin{lem}\label{final}
		For a subcomplex $K$ of $\Delta[p]$, the realization $|K|_{\dcal}$ is canonically isomorphic to $\underset{I_{K}}{\mathrm{colim}}\ \Delta^{n}$.
	\end{lem}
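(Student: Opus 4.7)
The plan is to show that the inclusion $F\colon(\Delta\!\downarrow\!K)_{inj}\hookrightarrow\Delta\!\downarrow\!K$ is an initial functor (so that colimits over $\Delta\!\downarrow\!K$ coincide with colimits over the subcategory of injective simplices), and then identify the latter colimit with $\mathrm{colim}_{I_K}\,\Delta^n$ by the very construction of $I_K$ as the image of the functor $(\Delta\!\downarrow\!K)_{inj}\to\dcal$.

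First I would identify the objects of $(\Delta\!\downarrow\!K)_{inj}$ with the non-degenerate simplices of $K$. A simplex $\sigma\colon\Delta[n]\to K$ lies in $(\Delta\!\downarrow\!K)_{inj}$ exactly when its composite with $K\hookrightarrow\Delta[p]$ corresponds to an injective monotonic map $[n]\to[p]$; since the non-degenerate simplices of $\Delta[p]$ are precisely these injective maps, this is equivalent to $\sigma$ being non-degenerate in $K$.

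The heart of the proof is an appeal to the Eilenberg--Zilber lemma: every simplex $\sigma\colon\Delta[n]\to K$ admits a unique factorization $\sigma=\tilde\sigma\circ\pi$ with $\pi\colon[n]\twoheadrightarrow[m]$ a degeneracy and $\tilde\sigma\colon\Delta[m]\to K$ non-degenerate. The pair $(\tilde\sigma,\pi)$ is then an object of the comma category $\sigma\!\downarrow\!F$, and I claim it is initial there. For any other object $(\tau,u)$ with $\tau\colon\Delta[\ell]\to K$ non-degenerate and $\tau\circ u=\sigma$, factor $u$ in $\Delta$ as $u=u''u'$ with $u'$ surjective and $u''$ injective; then $\tau\circ u''$ is non-degenerate (a composite of injective monotonic maps into $\Delta[p]$), and $\sigma=(\tau\circ u'')\circ u'$ is therefore an Eilenberg--Zilber factorization of $\sigma$. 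By uniqueness, $u'=\pi$ and $\tau\circ u''=\tilde\sigma$, so $u''$ supplies the (unique) morphism $(\tilde\sigma,\pi)\to(\tau,u)$ in $\sigma\!\downarrow\!F$.

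Since each $\sigma\!\downarrow\!F$ has an initial object, it is in particular non-empty and connected, so $F$ is an initial functor and restriction along $F$ preserves colimits, yielding
\[
|K|_\dcal\;=\;\underset{\Delta\downarrow K}{\mathrm{colim}}\,\Delta^n\;\xrightarrow{\;\cong\;}\;\underset{(\Delta\downarrow K)_{inj}}{\mathrm{colim}}\,\Delta^n\;=\;\underset{I_K}{\mathrm{colim}}\,\Delta^n.
\]
The only non-routine step is invoking EZ uniqueness to exhibit the initial object of $\sigma\!\downarrow\!F$; everything else is standard categorical formalism.
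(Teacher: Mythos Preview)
Your argument is essentially the paper's own: the paper simply asserts that $(\Delta\downarrow K)_{inj}$ is final in $\Delta\downarrow K$ (citing Mac Lane) and then passes to $I_K$, while you supply the standard Eilenberg--Zilber justification for that finality. One terminological slip to fix: the property you verify---that each comma category $\sigma\downarrow F$ is nonempty and connected (indeed has an initial object)---makes $F$ a \emph{final} (equivalently, \emph{cofinal}) functor, not an initial one; it is final functors that preserve colimits, whereas initial functors preserve limits. The mathematics is correct, only the label is reversed.
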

	\begin{proof}
		Since $(\Delta\downarrow K)_{inj}$ is final in $\Delta\downarrow K$, $|K|_{\dcal}$ is canonically isomorphic to $\underset{{(\Delta \downarrow K)_{inj}}}{\mathrm{colim}}\Delta^{n}$ (\cite[p. 217]{Mac}), and hence to $\underset{I_{K}}{\mathrm{colim}}\ \Delta^{n}$.
	\end{proof}
	The {\sl cone} $CL$ of a simplicial set $L$ is defined by
	\[
	CL = \underset{\Delta\downarrow L}{\mathrm{colim}} \ \Delta[m+1]
	\]
	in the category $\scal_{\ast}$ of pointed simplicial sets, where the base point of $\Delta[m+1]$ is the vertex (0) (see \cite[p. 189]{GJ} for the precise definition of the functor $\Delta\downarrow L \longrightarrow \scal_{\ast}$ sending $\sigma: \Delta[m] \longrightarrow X$ to $\Delta[m+1]$). In order to work in the unpointed context, let us express $CL$ as a colimit in the category $\scal$. Define the {\sl enlarged simplex category} $(\Delta\downarrow L)^{+}$ to be the full subcategory of the overcategory $\scal\downarrow L$ consisting of objects $\sigma: \Delta[m] \longrightarrow L \ (m \geq -1)$, where $\Delta[-1]$ denotes the initial object of $\scal$. Then, the functor $\Delta\downarrow L \longrightarrow \scal_{\ast}$ extends a functor $(\Delta\downarrow L)^{+} \longrightarrow \scal_{\ast}$ in an obvious manner. For the composite of this functor with the forgetful functor $\scal_{\ast} \longrightarrow \scal$, we have the following result.
	\begin{lem}\label{simplicialcone}
		For a simplicial set $L$, the cone $CL$ is naturally isomorphic to $\underset{({\Delta\downarrow L})^{+}}{\mathrm{colim}} \ \Delta[m+1]$, where the colimit is formed in the category $\scal$.
	\end{lem}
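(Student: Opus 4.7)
The plan is to verify the universal property directly, identifying cocones in $\scal$ on the extended diagram $F': (\Delta\downarrow L)^+ \longrightarrow \scal$, $\sigma \mapsto \Delta[m+1]$, with cocones in $\scal_\ast$ on the original diagram $F: \Delta\downarrow L \longrightarrow \scal_\ast$ whose colimit defines $CL$.

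The first step is to record the structure of the enlarged indexing category. Since $\Delta[-1]$ is the initial object of $\scal$, there is a unique object $\sigma_{-1}: \Delta[-1] \longrightarrow L$ of $(\Delta\downarrow L)^+$, and for every other object $\sigma: \Delta[m] \longrightarrow L$ (with $m \geq 0$) there is exactly one morphism $\sigma_{-1} \longrightarrow \sigma$; hence $\sigma_{-1}$ is an initial object of $(\Delta\downarrow L)^+$. The extended functor to $\scal_\ast$ sends $\sigma_{-1}$ to $\Delta[0]$ pointed by its unique vertex, and sends the unique morphism $\sigma_{-1} \longrightarrow \sigma$ to the pointed inclusion $\Delta[0] \longhookrightarrow \Delta[m+1]$ that picks out the vertex $(0)$, since this vertex is precisely the base point used in the definition of the simplicial cone. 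Composing with the forgetful functor $\scal_\ast \longrightarrow \scal$ then gives the functor $F'$.

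The second step is to unwind cocones. A cocone in $\scal$ under $F'$ with apex a simplicial set $Y$ is a compatible family $\{f_\sigma : F'(\sigma) \longrightarrow Y\}$. The component at $\sigma_{-1}$ is a map $y : \Delta[0] \longrightarrow Y$, equivalently a choice of base point in $Y_0$. Compatibility with the unique morphism $\sigma_{-1} \longrightarrow \sigma$ for each $\sigma \in \Delta\downarrow L$ is exactly the equation $f_\sigma((0)) = y$, i.e.\ the pointedness of $f_\sigma$ with respect to the base point $(0)$ of $\Delta[m+1]$ and $y$ of $Y$. This is precisely the data of a cocone in $\scal_\ast$ with apex $(Y, y)$ under $F$; conversely, every such pointed cocone extends uniquely to a cocone on $F'$ by reading off $y$ as the $\sigma_{-1}$-component. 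The two universal properties coincide, so the canonical comparison map $\underset{(\Delta\downarrow L)^+}{\mathrm{colim}}\,\Delta[m+1] \longrightarrow CL$ in $\scal$ is an isomorphism, and naturality in $L$ is automatic from the description via cocones.

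The argument is essentially formal, so there is no substantial obstacle. The only point requiring care is the bookkeeping that adjoining $\sigma_{-1}$ as an initial object and sending it to the base point $\Delta[0]$ is exactly the standard recipe converting a pointed colimit in $\scal_\ast$ into an unpointed colimit in $\scal$; once this is set up, the claimed isomorphism follows from the comparison of cocone data described above.
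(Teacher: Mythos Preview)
Your proof is correct; the paper's own proof is the single word ``Obvious,'' so you have simply written out in full the standard verification that adjoining an initial object mapping to the base point $\Delta[0]$ converts the pointed colimit defining $CL$ into an unpointed one. The approach is the same, just with the details made explicit.
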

	\begin{proof}
		Obvious.
	\end{proof}
	\if0
	\begin{lem}\label{coproduct}
		Let $X$ be a diffeological space, and let $X_1$ and $X_2$ be subsets of $X$. The following conditions are equivalent:
		\item[(i)]
		The diffeological subspace $X_1 \cup X_2$ is the coproduct of the diffeological subspaces $X_1$ and $X_2$.
		\item[(ii)] 
		For any $x_1 \in X_1$ and any $x_2 \in X_2$, there exists no smooth curve of $X$ connecting $x_1$ and $x_2$.
		\begin{proof}
			A plot whose source is $\rbb^n$ for some $n$ is called a global plot (cf. \cite[p. 21]{I}). It is obvious that the condition $(ii)$ is equivalent to the following condition: \\ \\
			$(ii)'$ For any $x_1 \in X_2$ and any $x_2 \in X_2$, there exists no global plot of $X$ whose image contains $x_1$ and $x_2$.\\ \\ 
			We can easily see that the condition $(i)$ is equivalent to the condition $(ii)'$.
		\end{proof}
	\end{lem}	
	\fi
	\begin{proof}[Proof of Proposition \ref{Axiom 3}]
		In the proof, we use the following notation: for a subset $A$ of a diffeological space $X$, $A_X$ is the set $A$ endowed with the sub-diffeology of $X$.
		\par\indent
		For a subcomplex $L$ of $\Delta[p]$, let $|L|$ be the underlying set of $|L|_\dcal$, and regard it as a subset of $\Delta^p$ via the canonical injection. Setting $K = \dot{\Delta}[p]$, we show that the smooth map 
		$id :|K|_\dcal \longrightarrow |K|_{\Delta^p}$ 
		is a diffeomorphism.
		\par\indent
		For $p \leq 1$, the result obviously holds. Suppose that the result holds up to $p-1$. Then, we show that the result holds for $p$.
		\par\indent
		\if0
		Step 1. Let us show that for any subcomplex $L$ of $\Delta[p-1]$, there exists an open diffeological subspace $V$ of $\Delta^{p-1}$ that contains $|L|_{\Delta^{p-1}}$ as a $\dcal$-retract.
		\par\indent
		For $L = \Delta[p-1]$, $V = \Delta^{p-1}$
		is the one we desire. Thus, we assume that 
		$L \subsetneqq\Delta[p-1]$.
		$|L|$ has the set-theoretic decomposition
		$$
		|L| = \textstyle \coprod \ (i_0, \ldots, i_k),
		$$
		where
		$(i_0, \ldots, i_k)$
		ranges over all open simplices of $\Delta^{p-1}$ which is contained in $|L|$. Set 
		$V = \cup \ U_{\{i_0, \ldots, i_k \}}(\epsilon)$
		for sufficiently small $\epsilon > 0$ (see Definition \ref{gnbd}). This is an open neighborhood of $|L|$.
		\par\indent
		We smoothly retract $V$ onto $|L|_{\Delta^{p-1}}$, as follows.
		First, we retract $V$ by applying the $\dcal$-retraction of $\Delta^{p-1} - \{b_{p-1}\}$ onto $\dot{\Delta}^{p-1}$ (Proposition \ref{deformontbdry}); the resulting diffeological subspace is denoted by $V_1$, which is contained in 
		$\dot{\Delta}^{p-1} = \Delta^{p-1} - (0, \ldots, p-1)$.
		\par\indent
		Since the boundary $\dot{\Delta}[p-1]$ of $\Delta{[p-1]}$ is isomorphic to the pushout $\Lambda_i[p-1]\underset{\dot{\Delta}[p-2]}{\cup}\Delta[p-2]$ in the category $\scal$ and $|\ |_{\dcal}$ has a right adjoint (Remark \ref{simplexofdelta}(2)), we have the canonical isomorphism in $\dcal$
		$$
		|\dot{\Delta}[p-1]|_{\dcal} \cong |\Lambda_i[p-1]|_{\dcal} \underset{|\dot{\Delta}[p-2]|_{\dcal}}{\cup} |\Delta[p-2]|_{\dcal},
		$$
		and hence, the canonical isomorphism in $\dcal$
		$$
		\dot{\Delta}^{p-1} \cong \Lambda^{p-1}_{i} \underset{\dot{\Delta}^{p-2}}{\cup} \Delta^{p-2}
		$$
		by the induction hypothesis. Further, we have the canonical isomorphism in $\dcal$
		$$
		\dot{\Delta}^{p-1} - \{b_{p-2} \} \cong \Lambda^{p-1}_{i} \underset{\dot{\Delta}^{p-2}}{\cup} \Delta^{p-2} - \{b_{p-2} \}
		$$
		by Lemma \ref{subspotcolim}, where $b_{p-2}$ is the barycenter of the $i^{th}$ face of $\Delta^{p-1}$. Thus, we apply the $\dcal$-retraction of 
		$\Delta^{p-2} - \{b_{p-2}\}$ 
		onto $\dot{\Delta}^{p-2}$ (Proposition \ref{deformontbdry}) to each 
		$(p-2)$-simplex 
		$\langle i_0, \ldots, i_{p-2}\rangle \not\subset |L|$ 
		in order to retract $V_1$ (Lemma \ref{predefretr}(1)); the resulting diffeological subspace is denoted by $V_2$, which is contained in 
		$$
		\Delta^{p-1} - \underset{\underset{k\geq p-2}{(i_0, \ldots, i_k) \not\subset {|L|}}}{\textstyle \coprod}(i_0, \ldots, i_k).
		$$
		We iterate similar procedures to retract $V_2$ onto $|L|_{\Delta^{p-1}}$.
		\fi
		\if0
		\item[Step 2.] Suppose that $K$ is the coproduct of a subcomplexes 
		$K_1, \cdots, K_l$.
		Since $| \ |_\dcal$ has the right adjoint $S^\dcal$ (cf. Remark \ref{simplexofdelta}(2)), $|K|_\dcal$ is the coproduct of 
		$|K_1|_\dcal, \ldots, |K_l|_\dcal$.
		By Proposition \ref{Axiom 1} and Lemma \ref{coproduct}, $|K|_{\Delta^p}$ is the coproduct of 
		$|K_1|_{\Delta^p}, \ldots, |K_l|_{\Delta^p}$.
		Thus, we may assume that $K$ is connected. If the set of vertices of $K$ does not contain 
		$\{(0), \ldots, (p) \}$,
		the result holds by the induction hypothesis (cf. Remark \ref{simplexofdelta}(1)). Thus, we assume that the set of vertices of $K$ contains
		$\{(0), \ldots, (p) \}$.
		\fi
		Set 
		$|K|_{\hat{i}} = |K| \, \cap \, \Delta^{p}_{\hat{i}}$,
		and let $|K|_{\hat{i} \ \dcal}$ be the set $|K|_{\hat{i}}$ endowed with the sub-diffeology of $|K|_{\dcal}$. We need to only show that 
		$id  : |K|_{\hat{i} \ \dcal} \longrightarrow |K|_{\hat{i} \ \Delta^p}$
		is a diffeomorphism for any $i$.
		\par\indent
		For $i \neq 0$, let $\gamma$ denote the permutation of $\{(0), \ldots, (p) \}$, switching $(0)$ and $(i)$. The affine bijection of $\Delta^{p}$ induced by $\gamma$ is also denoted by $\gamma$. Then, 
		$
		\gamma|_{|K|}: |K|_{\Delta^p} \longrightarrow |K|_{\Delta^p}
		$
		is a diffeomorphism (Lemma \ref{permutation}). We can see that
		$
		\gamma|_{|K|}: |K|_{\dcal} \longrightarrow |K|_{\dcal}
		$
		is also a diffeomorphism using Lemmas \ref{final} and \ref{permutation}. (Note that $\gamma$ does not define a simplicial self-map of $\dot{\Delta}[p]$ for $p > 1$.)
		Thus, we may assume that $i = 0$.
		\par\indent
		Set $K' = \Lambda_{0}[p]$ and $K'' = \Delta[p-1]$, and identify $K''$ with the $0^{th}$ face of $\Delta[p]$. Then, $\overline{K}:= \dot{\Delta}[p-1]$ is the intersection of $K'$ and $K''$.
		\par\indent
		First, we investigate the diffeology of $|K|_{\hat{0}\ \dcal}$. Since $K$ is isomorphic to the pushout $K' \underset{\overline{K}}{\cup}  K''$ in the category $\scal$, we have the isomorphism in $\dcal$ 
		\[
		|K|_{\dcal} \cong |K'|_{\dcal} \underset{|\overline{K}|_{\dcal}}{\cup} |K''|_{\dcal}
		\]
		(Remark \ref{simplexofdelta}(2)), and hence, the equality
		\[
		|K|_{\hat{0} \ \dcal} = |K'|_{\hat{0} \ \dcal}
		\]
		(Lemma \ref{subspotcolim}). Since $K'$ is isomorphic to the cone $C\overline{K}$ in a canonical way, we have
		\begin{eqnarray*}
			|K|_{\hat{0} \, \dcal} & = & |K'|_{\hat{0} \, \dcal}\\
			& = &  \underset{(\Delta\downarrow \overline{K})^{+}}{\mathrm{colim}} \ \Delta^{m+1}_{\hat{0}}
		\end{eqnarray*}
		by Lemmas \ref{subspotcolim} and \ref{simplicialcone}, and Remark \ref{simplexofdelta}(2).
		\par\indent
		Next, we look closely at the diffeology of $|K|_{\hat{0}\,\Delta^p}$. Consider the pullback diagram in $\dcal$
		\begin{center}
			\begin{tikzcd}
				(|\overline{K}| \times [0, 1) \cup \Delta^{p-1} \times (0))_{\Delta^{p-1} \times [0, 1)}  \arrow[hook]{r} \arrow[swap]{d}{\varphi_0}  &\Delta^{p-1} \times [0, 1) \arrow{d}{\varphi_0}\\
				{|K|}_{\hat{0} \, {\Delta^p}} \arrow[hook]{r}  & \Delta^{p}_{\hat{0}}
			\end{tikzcd}
		\end{center}
		(see Lemma \ref{initial}). Since $\varphi_{0}  : \Delta^{p-1} \times [0, 1) \longrightarrow \Delta^{p}_{\hat{0}}$ is final (Lemma \ref{halfopen}),
		$$
		\varphi_0 : (|\overline{K}|\times [0, 1) \cup \Delta^{p-1} \times (0))_{\Delta^{p-1} \times [0, 1)} \longrightarrow |K|_{\hat{0}\,\Delta^p}
		$$
		is also final (Lemma \ref{initialfinal}(2)). Set $V=\Delta^{p-1}-\{b_{p-1} \}$. By the definition of a final structure (see \cite[Section 2]{CSW}),
		$$
		\varphi_{0}:(|\overline{K}| \times [0, 1) \cup V \times (0))_{\Delta^{p-1}\times[0, 1)} \longrightarrow |K|_{\hat{0}\,\Delta^p}
		$$
		is also final. Since $|\overline{K}|_{\Delta^{p-1}}$ is a $\dcal$-retract of $V$ (Proposition \ref{deformontbdry}), there is a retract diagram in $\dcal$ of the form
		\[
		{|\overline{K}|}_{\Delta^{p-1}} \times [0, 1) \xhookrightarrow[]{\ \iota \times 1\ } V \times [0, 1) \xrightarrow{\ \rho \times 1\ } |\overline{K}|_{\Delta^{p-1}} \times [0, 1).
		\]
		This diagram restricts to the retract diagram in $\dcal$
		\[
		{|\overline{K}|}_{\Delta^{p-1}} \times [0, 1) \xhookrightarrow[]{\ \iota \times 1\ } ({|\overline{K}|} \times [0, 1) \ \cup\ V \times (0))_{\Delta^{p-1}\times[0, 1)} \xrightarrow{\ \rho \times 1\ } {|\overline{K}|}_{\Delta^{p-1}} \times [0, 1),
		\]
		which shows that 
		$$
		\varphi_0 : |\overline{K}|_{\Delta^{p-1}} \times [0, 1) \longrightarrow |K|_{\hat{0}\,\Delta^p}
		$$
		is final. Since 
		$
		|\overline{K}|_{\Delta^{p-1}} \cong \underset{{(\Delta\downarrow\overline{K})}^{+}}{\text{colim}} \ \Delta^{m}
		$
		by the induction hypothesis, we have
		\begin{eqnarray*}
			|K|_{\hat{0} \ \Delta^p}& \cong &|\overline{K}|_{\Delta^{p-1}} \times [0, 1) / |\overline{K}|_{\Delta^{p-1}} \times (0) \\
			& \cong & \underset{{(\Delta\downarrow\overline{K})}^{+}}{\text{colim}} \ (\Delta^{m} \times [0, 1)) / \underset{{(\Delta\downarrow\overline{K})}^{+}}{\text{colim}} \  (\Delta^m \times (0)) \\
			& \cong & \underset{{(\Delta\downarrow\overline{K})}^{+}}{\text{colim}} \ \Delta^{m+1}_{\hat{0}}
		\end{eqnarray*}
		(see Proposition \ref{category D}(2) and Lemma \ref{halfopen}), which completes the proof.
		\if0
		\item[Step 4.]Consider the composite
		$$
		\underset{\Delta\downarrow\overline{K}}{\text{colim}} \ \Delta^{m} \times [0, 1) \cong |\overline{K}|_{\Delta^{p-1}} \times [0, 1) \xrightarrow{\varphi_0} |K|_{\hat{0}\,\Delta^{p-1}} \xrightarrow{id} |K|_{\hat{0}\,\dcal} \cong \underset{\Delta\downarrow\overline{K}}{\text{colim}} \ \Delta^{m+1}_{\hat{0}}
		$$
		(cf. Steps 2 and 3). This is just the map defined by the compatible family of smooth maps
		$$
		\{\Delta^{m}\times[0, 1) \xrightarrow{\varphi_0} \Delta^{m+1}_{\hat{0}} \longhookrightarrow \underset{\Delta\downarrow\overline{K}}{\text{colim}} \ \Delta^{m+1}_{\hat{0}} \}_{\overline{\sigma}\in \Delta\downarrow\overline{K},}
		$$
		which is obviously smooth. Since $\varphi_0$ is final,
		$id:|K|_{\hat{0}\,\Delta^{p}} \longrightarrow |K|_{\hat{0}\,\dcal}$
		is also smooth, which completes the proof.
		\fi
	\end{proof}
	
	\section{Verification of Axiom 4}
	In this section, we verify Axiom 4.
	\begin{prop}[\bf Axiom 4]\label{Axiom 4}
		$\Lambda^{p+1}_k$ is a deformation retract of $\Delta^{p+1}$ in $\dcal$.
	\end{prop}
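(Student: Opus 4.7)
The plan is to extend the proof of Lemma \ref{key} from the half-open interval $[0,1)$ to the closed interval $I$. By Lemma \ref{permutation} it suffices to handle the case $k=0$, so I aim to construct a $\dcal$-deformation of $\Delta^{p+1}$ onto $\Lambda^{p+1}_0$.

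First, I will upgrade $\varphi_0\colon \Delta^{p}\times I\to\Delta^{p+1}$ from the open quotient of Lemma \ref{halfopen} to a $\dcal$-quotient on the closed interval. Step 3 of the proof of Lemma \ref{sp} (statement $(1)_{p+1}$) already gives that $\mathrm{id}\colon\Delta^{p+1}\to\Delta^{p+1}_{\mathrm{cone}}$ is smooth, and the smoothness of the affine map $\varphi_{p+1}$ (Proposition \ref{Axiom 2}) yields the converse identity, so $\varphi_{p+1}$ is a $\dcal$-quotient; Lemma \ref{permutation} then transports this to $\varphi_0$, and Lemma \ref{times X} lifts it to a $\dcal$-quotient $\varphi_0\times 1_{I}\colon\Delta^p\times I\times I\to\Delta^{p+1}\times I$. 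Next, reusing the smooth bump $\phi_p$ and the $\dcal$-deformation $R_{A^p}$ from the proof of Lemma \ref{key}, I define smooth maps on $\Delta^p\times I\times I$ by the same formulas as in that proof, but now also allowed at $t=1$:
\[
\widetilde{P}'_{p+1}(x,t,s)=(x,(1-\phi_p(x)s)t),
\]
\[
\widetilde{Q}'_{p+1}(x,t,s)=(x,0)\quad\text{on } \phi_p^{-1}(1)^\circ\times I\times I,
\]
\[
\widetilde{Q}''_{p+1}(x,t,s)=(R_{A^p}(x,s),(1-\phi_p(x))t)\quad\text{on } A^p\times I\times I.
\]
On the overlap $\phi_p^{-1}(1)^\circ\cap A^p$, both $\varphi_0\circ\widetilde{Q}'_{p+1}$ and $\varphi_0\circ\widetilde{Q}''_{p+1}$ reduce to the constant map onto $(0)$, so they descend jointly. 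Pushing through $\varphi_0\times 1_{I}$, I obtain smooth $\dcal$-homotopies $\widetilde{P}_{p+1},\widetilde{Q}_{p+1}\colon\Delta^{p+1}\times I\to\Delta^{p+1}$ with $\widetilde{P}_{p+1}(\cdot,0)=\mathrm{id}$ and $\widetilde{Q}_{p+1}(\cdot,0)=\widetilde{P}_{p+1}(\cdot,1)$, and their composite, formed using a cut-off reparametrization as in Remark \ref{Dhomotopy}, will be the candidate $\dcal$-deformation.

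The main obstacle is checking that the extension to the closed interval actually produces a retraction onto the \emph{full} horn $\Lambda^{p+1}_0$ rather than onto $\Lambda^{p+1}_{\hat{0}}$, and that the resulting map fixes $\Lambda^{p+1}_0$ pointwise. For the endpoint at $s=1$ on the newly added face $\Delta^p_{(0)}$, the formula $\widetilde{Q}''_{p+1}(x,1,1)=(R_{A^p}(x,1),1-\phi_p(x))$ places the output in $\dot{\Delta}^p\times I$, whose image under $\varphi_0$ is contained in $\Lambda^{p+1}_0$, while on $\phi_p^{-1}(1)^\circ$ the output is $(0)$; together with the behaviour on $\Delta^{p+1}_{\hat{0}}$ inherited from Lemma \ref{key}, this gives $\widetilde{Q}_{p+1}(\Delta^{p+1},1)\subset\Lambda^{p+1}_0$. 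For the fixing of $\Lambda^{p+1}_0$, the preimage $\varphi_0^{-1}(\Lambda^{p+1}_0)=\Delta^p\times\{0\}\cup\dot{\Delta}^p\times I$ satisfies $\phi_p=0$ and $R_{A^p}=\mathrm{id}$ on its non-collapsed part $\dot{\Delta}^p\times I$, so each of the defining formulas reduces to the identity there, while on $\Delta^p\times\{0\}$ every formula preserves $t=0$ and hence descends to the constant $(0)$.
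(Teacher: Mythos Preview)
Your argument has a genuine gap at its very first step: the claim that $\varphi_{0}\colon \Delta^{p}\times I\to\Delta^{p+1}$ is a $\dcal$-quotient map is false, because this map is not even smooth for $p\geq 1$. Proposition~\ref{Axiom 2} concerns affine maps $\Delta^{r}\to\Delta^{q}$ between standard simplices; it says nothing about affine maps out of a product $\Delta^{p}\times I$, so it cannot be invoked to show that $\mathrm{id}\colon\Delta^{p+1}_{\mathrm{cone}}\to\Delta^{p+1}$ is smooth. Indeed, Lemma~\ref{deltacone} only establishes the equality $\Delta^{p+1}-\dot{\Delta}^{p}_{(0)}=(\Delta^{p+1}-\dot{\Delta}^{p}_{(0)})_{\mathrm{cone}}$ on the complement of $\dot{\Delta}^{p}_{(0)}$, and this is sharp. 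For a concrete obstruction when $p=1$: take smooth functions $a(u)=e^{-1/u^{2}}$ and $c(u)=e^{-1/u^{2}}(1+\sin(1/u))$, and form the plot $u\mapsto((1-a(u),a(u)),\,1-c(u))$ of $\Delta^{1}\times I$ near the corner $((0),1)$. Its image under $\varphi_{0}$ lands near the vertex $(1)$ of $\Delta^{2}$, where any plot of $\Delta^{2}$ must locally factor through $\varphi_{1}$; but the required $\Delta^{1}$-coordinate of such a factorization is $(1-c)a/(a+c-ac)$, which behaves like $1/(2+\sin(1/u))$ near $u=0$ and has no continuous extension. Thus $\varphi_{0}$ carries a plot of $\Delta^{1}\times I$ to a non-plot of $\Delta^{2}$.

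Because $\varphi_{0}\times 1_{I}$ is not a $\dcal$-quotient onto $\Delta^{p+1}\times I$, your formulas for $\widetilde{P}_{p+1}$ and $\widetilde{Q}_{p+1}$ do not descend to smooth maps on $\Delta^{p+1}\times I$; the rest of the argument therefore collapses. The paper circumvents this obstruction rather than overcoming it: it damps $Q_{p+1}$ by a cut-off $\mu(1-t)$ so that the resulting homotopy $\rcal_{p+1}$ is the constant homotopy of the identity near $\dot{\Delta}^{p}_{(0)}$, hence extends smoothly to $\Delta^{p+1}\times I$ by Lemma~\ref{deltacone} alone. The price is that $\rcal_{p+1}(\cdot,1)$ no longer lands in $\Lambda^{p+1}_{0}$ but only in $\Lambda^{p+1}_{0}\cup E^{(1)}$ for some collar $E^{(1)}$ of $\dot{\Delta}^{p}_{(0)}$; further stages using the good-neighborhood deformations $R_{p-k,k+1}$ and a final application of $R_{A^{p}}$ via Lemma~\ref{predefretr} are then needed to finish the retraction onto $\Lambda^{p+1}_{0}$.
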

	Let us begin by showing a lemma on the diffeology of $\Delta^{p+1}$. In this section, let $\Delta^{p+1}_{\mathrm{cone}}$ denote the set $\Delta^{p+1}$ endowed with the quotient diffeology by the surjection
	$$
	\varphi_{0}: \Delta^{p} \times I \longrightarrow \Delta^{p+1}
	$$
	defined by $\varphi_{0}(x, t) = (1-t)(0) + td^{0}(x)$, which is the obvious extension of the map $\varphi_{0}$ in Definition \ref{simplices}. For a subset $A$ of $\Delta^{p+1}$, $A_{\mathrm{cone}}$ is the set $A$ endowed with the sub-diffeology of $\Delta^{p+1}_{\mathrm{cone}}$. Recall that $id  : \Delta^{p+1} \longrightarrow \Delta^{p+1}_{\mathrm{cone}}$ is smooth (Lemma \ref{sp}).
	\begin{lem}\label{deltacone}
		The equality $\Delta^{p+1}-\dot{\Delta}^{p}_{(0)}$ $=$ $(\Delta^{p+1}-\dot{\Delta}^{p}_{(0)})_{\mathrm{cone}}$ holds.
		\begin{proof}
			The map
			\[
			id: \Delta^{p+1}_{\hat{0}} \longrightarrow \Delta^{p+1}_{\hat{0} \ \mathrm{cone}}
			\]
			is a diffeomorphism by Lemmas \ref{halfopen} and \ref{cone}(2). The map
			\[
			id: U_{\{1, \ldots, p+1\}} \longrightarrow U_{\{1, \ldots, p+1 \} \ \mathrm{cone}}
			\]
			is also a diffeomorphism by Proposition \ref{goodnbd}, and Lemma \ref{cone}(3). Thus, the result holds.
		\end{proof}
	\end{lem}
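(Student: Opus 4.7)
The plan is to express $\Delta^{p+1}-\dot{\Delta}^{p}_{(0)}$ as the union of two open diffeological subspaces on which the original and cone diffeologies admit explicit quotient/product descriptions, compare these descriptions, and then invoke the locality axiom of diffeologies. Since $id: \Delta^{p+1}\longrightarrow \Delta^{p+1}_{\mathrm{cone}}$ is smooth by Lemma \ref{sp}, it suffices to show that the reverse map is smooth locally, which is exactly the content of showing the two diffeologies on the subset coincide.

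First I would observe the set-theoretic covering
$$
\Delta^{p+1}-\dot{\Delta}^{p}_{(0)} \;=\; \Delta^{p+1}_{\hat{0}} \;\cup\; U_{\{1,\ldots,p+1\}},
$$
since a point $(x_0,\ldots,x_{p+1})$ outside $\dot{\Delta}^{p}_{(0)}$ either satisfies $x_0>0$ or else has $x_0=0$ and thus $x_i>0$ for every $i\geq 1$. Both subspaces are open in the $D$-topology (they are defined by strict-inequality conditions on coordinates), and by Proposition \ref{Axiom 1} and Lemma \ref{sp}(1) they are open in the cone diffeology as well. So the locality of diffeologies reduces the problem to showing $id$ is a diffeomorphism on each piece.

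On $\Delta^{p+1}_{\hat{0}}$: by Lemma \ref{halfopen}, $\varphi_{0}: \Delta^{p}\times [0,1)\longrightarrow \Delta^{p+1}_{\hat{0}}$ is a $\dcal$-quotient map for the original diffeology. For the cone diffeology, $\Delta^{p+1}_{\hat{0} \ \mathrm{cone}}$ is precisely the open cone $\mathring{C}\Delta^{p}$, which by Lemma \ref{cone}(2) is canonically the quotient diffeological space $\Delta^{p}\times[0,1)/\Delta^{p}\times\{0\}$ with quotient map $\varphi_{0}$. Since both quotient maps are the same set map, the two diffeologies on $\Delta^{p+1}_{\hat{0}}$ coincide. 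On $U_{\{1,\ldots,p+1\}}$: applying Proposition \ref{goodnbd} with $I=\{1,\ldots,p+1\}$ and $k=p$, the original diffeology is described by $\varPhi_{I}: U_{\{1,\ldots,p+1\}}\xrightarrow{\cong} \mathring{\Delta}^{p}\times \Delta^{1}_{\hat{0}}$, and $\Delta^{1}_{\hat{0}}$ is diffeomorphic to $[0,1)$ via the canonical affine parametrization (using $\Delta^{1}=\Delta^{1}_{\mathrm{sub}}$). On the cone side, Lemma \ref{cone}(3) gives a diffeomorphism $\varphi_{0}: \Delta^{p}\times (0,1]\xrightarrow{\cong} \Delta^{p+1}_{\mathrm{cone}}-\{(0)\}$; the preimage of $U_{\{1,\ldots,p+1\} \ \mathrm{cone}}$ is $\mathring{\Delta}^{p}\times (0,1]$, so $U_{\{1,\ldots,p+1\} \ \mathrm{cone}}\cong \mathring{\Delta}^{p}\times (0,1]$. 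A brief computation with the explicit formulas for $\varPhi_{I}$ and $\varphi_{0}^{-1}$ shows that the composite identifying the two product descriptions on the common underlying set is $(x,t)\mapsto (x,1-t)$, a diffeomorphism.

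The main obstacle is bookkeeping the various coordinate identifications in the second piece: one needs to unpack $\varPhi_{I}$ from Proposition \ref{goodnbd} and the cone parametrization $\varphi_{0}$ from Lemma \ref{cone}(3) and check they give compatible smooth structures on $U_{\{1,\ldots,p+1\}}$ up to the orientation-reversing reparametrization on the second factor. Once this compatibility is verified, the locality axiom assembles the two local diffeomorphisms into the desired global equality of diffeologies.
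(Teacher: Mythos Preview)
Your proposal is correct and follows essentially the same approach as the paper: the paper's proof also decomposes $\Delta^{p+1}-\dot{\Delta}^{p}_{(0)}$ into $\Delta^{p+1}_{\hat{0}}$ and $U_{\{1,\ldots,p+1\}}$, invoking Lemma~\ref{halfopen} with Lemma~\ref{cone}(2) on the first piece and Proposition~\ref{goodnbd} with Lemma~\ref{cone}(3) on the second. You are simply more explicit than the paper about the covering argument and the coordinate reparametrization $(x,t)\mapsto(x,1-t)$ on the second piece, which the published proof leaves to the reader.
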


	\if0
	Since the map
	$$
	id:\Delta^{p+1}_{\hat{0}} \longrightarrow \Delta^{p+1}_{\hat{0}\ \mathrm{cone}}
	$$
	is a diffeomorphism (cf. Lemmas \ref{halfopen} and \ref{opencone}), we have only to see that
	$$
	id: U_I \longrightarrow U_{I\ \mathrm{cone}}
	$$
	is a diffeomorphism, where $I = \{1, \ldots, p+1 \}$ (cf. Proposition \ref{goodnbd}). \\ \\
	Consider the commutative diagram in $\dcal$
	$$
	\begin{tikzcd}
	\mathring{\Delta}^{p} \times \Delta^{1}_{\hat{0}} \arrow[r,"1 \times \nu"]&\mathring{\Delta}^p \times (0, 1] \arrow[d,"\varphi_{0}"]\\
	U_{I}\arrow[u,"\varPhi_{I}"]\arrow[r,"id"] & U_{I\ \mathrm{cone}},
	\end{tikzcd}
	$$
	where $\nu: \Delta^{1}_{\hat{0}} \longrightarrow (0, 1]$ is defined by $\nu((1-t)(0)+t(1)) = 1-t$. Since $\varPhi_I$ and $1 \times \nu$ are diffeomorphisms (cf. Proposition \ref{goodnbd}), it is sufficient to show that the right vertical arrow $\varphi_0$ is a diffeomorphism.\\ \\
	Consider the commutative diagram in $\dcal$.
	$$
	\begin{tikzcd}
	\mathring{\Delta}^{p} \times (0, 1] \arrow[swap]{d}{\varphi_0} \arrow[hookj]{r}&\Delta^p \times [0, 1] \arrow{d}{\varphi_{0}}\\
	U_{I \ \mathrm{cone}}\arrow[hook]{r} & \Delta^{p+1}_{\mathrm{cone}}.
	\end{tikzcd}
	$$
	By an argument similar to that in the proof of Lemma \ref{firstproperty}(2), this diagram is a pullback diagram in $\dcal$, and hence, $\varphi_0: \mathring{\Delta}^p \times (0, 1] \longrightarrow U_{I\ \mathrm{cone}}$ is a diffeomorphism (cf. Lemma \ref{initialfinal}(2)).
	\fi

	\begin{proof}[Proof of Proposition 8.1]
		For simplicity, we assume that $k=0$ (Lemma \ref{permutation}).
		\par\indent
		Let $P_{p+1}  : \Delta^{p+1}_{\hat{0}} \times I \longrightarrow \Delta^{p+1}_{\hat{0}}$ and $Q_{p+1} : \Delta^{p+1}_{\hat{0}} \times I \longrightarrow \Delta^{p+1}_{\hat{0}}$ be $\dcal$-homotopies constructed in the proof of Lemma \ref{key}.
		Choose sufficiently small $\epsilon>0$ and a monotonically increasing smooth function $\mu :[0,1]\longrightarrow[0,1]$ such that $\mu\equiv0$ on $[0,\epsilon]$ and $\mu\equiv1$ on $[2\epsilon,1]$. 
		Define the $\dcal$-homotopy $\qcal_{p+1} : \Delta^{p+1}_{\hat{0}} \times I \longrightarrow \Delta^{p+1}_{\hat{0}}$ by 
		$$
		\qcal_{p+1}((1-t)(0)+td^{0}(x), s) = Q_{p+1}((1-t)(0)+td^{0}(x), \mu(1-t)s),
		$$
		and let $\rcal_{p+1}$ be the composite of the $\dcal$-homotopies $P_{p+1}$ and $\qcal_{p+1}$ (see Remark \ref{Dhomotopy}). From the construction $\rcal_{p+1}$ extends uniquely to a smooth map from $\Delta^{p+1}_{\mathrm{cone}} \times I$ to $\Delta^{p+1}_{\mathrm{cone}}$, which is the constant homotopy of the identify near $\dot{\Delta}^{p}_{(0)}$. Thus, this extension of $\rcal_{p+1}$ can be regarded as a smooth map from $\Delta^{p+1} \times I$ to $\Delta^{p+1}$ (Lemma \ref{deltacone}), which is also denoted by $\rcal_{p+1}$.

		\if0
		Let $R_{p+1}:\Delta^{p+1}_{\hat{0}}\times I\longrightarrow\Delta^{p+1}_{\hat{0}}$ be the $\mathcal{D}$-deformation of $\Delta^{p+1}_{\hat{0}}$ onto $\Lambda^{p+1}_{\hat{0}}$ constructed in the proof of Lemma \ref{}, and define the smooth map ${\mathcal R}_{p+1}:\Delta^{p+1}_{\hat{0}}\times I\longrightarrow\Delta^{p+1}_{\hat{0}}$ by
		$$
		{\mathcal R}_{p+1}(x,s)=
		\begin{cases}
		R_{p+1}(x,s) & 0\leq s \leq \frac{1}{2}\\
		R_{p+1}(x,\frac{1}{2}+\mu(x_0)(s-\frac{1}{2})) & \frac{1}{2} \leq s \leq 1,
		\end{cases}
		$$
		
		Since $U_{\{1, \ldots, p+1 \}} \cong \mathring{\Delta}^{p} \times (0, 1]$ (cf. Proposition \ref{goodnbd}), $\rcal_{p+1}$ extends uniquely to a smooth map from
		\[
		(\Delta^{p+1}-\dot{\Delta}^{p}_{(0)}) \times I = \Delta^{p+1}_{\hat{0}} \times I \cup U_{1, \ldots, p+1} \times I
		\]
		to $\Delta^{p+1} - \dot{\Delta}^{p}_{(0)}$. This extension of $\rcal$ further extends uniquely to a smooth map from $\Delta^{p+1} \times I$ to $\Delta^{p+1}$, which is also denoted by $\rcal_{p+1}$. Note that $\rcal_{p+1}$ is the constant homotopy of the identify near $\dot{\Delta}^{p}_{(0)}$.
		\fi
		\par\indent 
		Using the $\dcal$-homotopies $\rcal_{p+1}$, $R_{q, r}$, and $R_{A^{p}}$ (see the proof of Lemma \ref{key}), we deform $\Delta^{p+1}$ onto $\Lambda^{p+1}_{0}$ as follows.
		
		\begin{itemize}
			\item[$\bullet$]
			We deform $\Delta^{p+1}$ by applying ${\mathcal R}_{p+1}$; the resulting diffeological subspace is denoted by $\Lambda^{p+1}_0\cup E^{(1)}$, where $E^{(1)}$ is a neighborhood of $\dot{\Delta}^p_{(0)}$ that is $\czero$-homotopic to $\dot{\Delta}^p_{(0)}$. We may assume that $A'^{p}:= E^{(1)} \cap \Delta^{p}_{(0)}$ is a diffeological subspace of the form $\Delta^{p}_{(0)}-B'^{p}$, where $B'^{p}$ is an open $p$-dimensional disk such that its closure is contained in $\mathring{\Delta}^{p}_{(0)}$ and its center is the barycenter of $\Delta^{p}_{(0)}$.
			
			\item[$\bullet$]
			We deform $\Lambda^{p+1}_0\cup E^{(1)}$ by applying $R_{p-1,2}$ to good neighborhoods in $\Delta^{p+1}$ of the open $(p-1)$-simplices of $\Delta^p_{(0)}$; the resulting diffeological subspace is denoted by $\Lambda^{p+1}_0\cup E^{(2)}\cup A'^p$, where $E^{(2)}$ is a neighborhood in $\Delta^{p+1}$ of the $(p-2)$-skeleton $\mathrm{sk}_{p-2}\ \Delta^p_{(0)}$ which is $\czero$-homotopic to $\mathrm{sk}_{p-2}\ \Delta^p_{(0)}$.
			
			\item[$\bullet$]
			We iterate the same procedure using $R_{p-k,k+1}$ $(k=2, \ldots,\ p)$ to deform $\Lambda^{p+1}_0\cup E^{(2)}\cup A'^p$ onto $\Lambda^{p+1}_0\cup A'^p$.
			
			\item[$\bullet$]
			Since the diffeological subspace 
			$\Lambda^{p+1}_{0} \cup A'^{p}$
			is just the pushout $|\Lambda_{0}[p+1]|_{\dcal} \underset{\dot{\Delta}^{p}}{\cup} A'^{p}$
			by Proposition \ref{Axiom 3}, Remark \ref{simplexofdelta}(2), and Lemma \ref{subspotcolim} (cf.~the argument in the proof of Proposition \ref{Axiom 3}), we deform 
			$\Lambda^{p+1}_{0} \cup A'^p$
			onto the diffeological subspace $|\Lambda_{0}[p+1]|_{\dcal}$ $(= \Lambda^{p+1}_{0})$ using the deformation $R_{A^p}$ constructed in the proof of Lemma \ref{key} (see Lemma \ref{predefretr}(2)). \hfill \qed			
		\end{itemize}		\color{white}
		\end{proof}
	\if0
	\begin{proof}
		We may assume that $K \subsetneq\Delta[p]$.\\
		\indent Let $|K|_{\Delta^p}$ denote the set $|K|$ endowed with the initial structure by the canonical injection $|K|\longhookrightarrow \Delta^p$.
		Then, we have only to see that the map $id:|K|_{\Delta^p}\longrightarrow |K|$ is smooth.\\
		\indent By Lemmas \ref{} and \ref{}, we may assume that $K$ is connected. For $p\leq1$, the result obviously holds.\\
		Suppose that the result holds up to $p-1$. If $K$ does not contain the set of vertices $\{(0),\cdots,(p)\}$, the result holds by the induction hypothesis. Thus, we assume that $K$ contains the set of vertices $\{(0),\cdots,(p)\}$. We have only to see that the map $id:|K|_{\hat{i}\Delta^p}\longrightarrow |K|_{\hat{i}}$ is smooth, where $|K|_{\hat{i}\Delta^p}$ and $|K|_{\hat{i}}$ are the set $|K|\cap \Delta^p_{\hat{i}}$ endowed with the subset diffeologies of $|K|_{\Delta^p}$ and $|K|$ respectively.\\
		\indent Now identify $\Delta^p_{\hat{i}}$ with the open cone $\mathring{C}\Delta^{p-1}$(cf. Lemma \ref{}).
		Then $|K|_{\hat{i}}$ is identified with the open cone $\mathring{C}|\overline{K}$, where $\overline{K}$ is a subcomplex of the $i$-th face of $\Delta[p]$. Thus, we have
		\begin{align*}
		|K|_{\hat{i}} &\cong \mathring{C}|\overline{K}|\cong (\underset{\Delta\downarrow\bar{K}}{colim}\Delta^m)\times[0,1)/\sim\\
		&\cong (\underset{\Delta\downarrow\overline{K}}{colim}\Delta^m\times[0,1))/\sim
		\end{align*}
		by Lemma \ref{}. For $|K|_{\hat{i}\Delta^p}$, consider the pullback diagram
		$$
		\begin{tikzcd}
		(|K|\times[0,1)\cup\Delta^{p-1}\times(0))_{\Delta^{p-1}\times[0,1)} \arrow[hook]{r} \arrow[swap]{d}{\varphi_i} &\Delta^{p-1}\times[0,1)\arrow{d}{\varphi_i}\\
		\left|K\right|_{\hat{i}\Delta^p} \arrow[hook]{r}&\Delta^p_{\hat{i}},
		\end{tikzcd}
		$$
		where $(|K|\times[0,1)\cup\Delta^{p-1}\times(0))_{\Delta^{p-1}\times[0,1)}$ is the set $|K|\times[0,1)\cup\Delta^{p-1}\times(0)$ endowed with the sub-diffeology of $\Delta^{p-1}\times[0,1)$. Then
		$$\varphi_i:(|K|\times[0,1)\cup\Delta^{p-1}\times(0))_{\Delta^{p-1}\times[0,1)}\longrightarrow|K|_{\hat{i}\Delta^p}$$
		is final (Lemma \ref{}).\\
		For a parametrization $\alpha:U\longrightarrow |K|_{\hat{i}\Delta^p}$, observe that $\alpha$ lifts locally along $\varphi_i:(|K|\times[0,1)\cup\Delta^{p-1}\times(0))_{\Delta^{p-1}\times[0,1)}\longrightarrow|K|_{\hat{i}\Delta^p}$ in ${\dcal}$ if and only if $\alpha$ lifts locally along $\varphi_i:(|\overline{K}|\times[0,1))_{\Delta^{p-1}\times[0,1)}\longrightarrow|K|_{\hat{i}\Delta^p}$ in ${\dcal}$, using good neighborhoods (    ) and deformations $R_{q,p-q-1}\times1_{[0,1)}$ (cf. the proof of Lemma \ref{}). Note that the diffeological subspace $(|\overline{K}|\times[0,1))_{\Delta^{p-1}\times[0,1)}$ is just the product $|\overline{K}|_{\Delta^{p-1}}\times[0,1)$.
		Then, we easily have the equivalence
		\begin{align*}
		\alpha\text{ is in }D_{|K|_{\hat{i}\Delta^p}} \Leftrightarrow &\ \alpha \text{ lifts locally along } \varphi_i:|\overline{K}|_{\Delta^{p-1}}\times[0,1) \longrightarrow |K|_{\hat{i}\Delta^p} \text{ in } {\dcal},
		\end{align*} 
		which implies that $\varphi_i:|\overline{K}|_{\Delta^{p-1}}\times[0,1)\longrightarrow|K|_{\hat{i}\Delta^p}$ is final.
		We also have
		$$|\overline{K}|_{\Delta^{p-1}}\times[0,1)=(\underset{\Delta\downarrow\overline{K}}{colim}\Delta^m)\times[0,1)=\underset{\Delta\downarrow\overline{K}}{colim}\Delta^m\times[0,1)$$
		by the induction hypothesis and Lemma \ref{}, which implies that $id:|K|_{\hat{i}\Delta^p}\longrightarrow|K|_{\hat{i}}$ is smooth.
	\end{proof}
	\fi
	\if\else
	\begin{lem}
		The standard $p$-simplex $\dot{\Delta}^p$ is contractible.
		\begin{proof}
			The standard $p$-simplex $\dot{\Delta}^p$ is deformed onto the vertex $(0)$ by iterated application of the deformation of a simplex onto a horn.
		\end{proof}
	\end{lem}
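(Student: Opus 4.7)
The plan is to prove the claim by induction on $p$, using Proposition~\ref{Axiom 4} repeatedly to peel the simplex onto its horn and then contract the horn face by face. (I read the statement as asserting contractibility of the standard $p$-simplex $\Delta^{p}$; the literal boundary $\dot{\Delta}^{p}$ carries the topology of a $(p-1)$-sphere by Axiom~1, so it cannot be contractible in $\dcal$ for $p \geq 2$.) The cases $p = 0$ and $p = 1$ are immediate: $\Delta^{0}$ is a point, and $\Lambda^{1}_{0}$ is already a vertex, so Proposition~\ref{Axiom 4} alone gives the result.

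For the inductive step with $p \geq 2$, I would first invoke Proposition~\ref{Axiom 4} to $\dcal$-deform $\Delta^{p}$ onto the horn $\Lambda^{p}_{0}$, reducing to contracting $\Lambda^{p}_{0}$ onto the vertex $(0)$. I would build $\Lambda^{p}_{0}$ as an iterated pushout in $\scal$ by successively attaching the faces $\Delta^{p-1}_{(i)}$, $i = 1, \dots, p$, along the subcomplex where they meet the already-built part. By Remark~\ref{simplexofdelta}(2) the realization functor $|\ |_{\dcal}$ preserves colimits, so this becomes an iterated pushout in $\dcal$; by Proposition~\ref{Axiom 3} together with Lemma~\ref{subspotcolim}, the space at each stage coincides with the corresponding diffeological subspace of $\Delta^{p}$.

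I would then contract $\Lambda^{p}_{0}$ stage by stage. The induction hypothesis supplies a $\dcal$-contraction of each newly attached $\Delta^{p-1}_{(i)}$ onto $(0)$; chosen so that it is stationary on the attaching subcomplex, Lemma~\ref{predefretr}(2) extends it to a $\dcal$-deformation retraction of the enlarged subcomplex onto the previously built piece. Composing all these retractions with the initial simplex-onto-horn deformation produces the desired contraction of $\Delta^{p}$ onto $(0)$.

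The main technical obstacle is achieving the relativeness needed to invoke Lemma~\ref{predefretr}(2) at each gluing step: the inductively constructed contraction of $\Delta^{p-1}_{(i)}$ must fix the attaching subcomplex pointwise throughout. I would handle this by strengthening the induction to a relative statement --- a contraction of $\Delta^{n}$ that fixes any prescribed boundary subcomplex --- and arrange it by modulating the explicit retractions $R_{q,r}$ and $R_{A^{n}}$ from the proof of Lemma~\ref{key} (which are already built from cut-off functions vanishing near the lower skeleta) so that they remain stationary on the chosen subcomplex throughout.
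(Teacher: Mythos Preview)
You read the statement correctly: the boundary $\dot{\Delta}^{p}$ cannot be contractible, and the intended assertion is that $\Delta^{p}$ (hence also $\Lambda^{p}_{k}$) is contractible in $\dcal$; compare Corollary~\ref{simplexcontractible}. Your opening move---deform $\Delta^{p}$ onto $\Lambda^{p}_{0}$ via Proposition~\ref{Axiom 4} and then contract the horn---is also the paper's.

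The gap is in your contraction of the horn. You propose to contract each attached face $\Delta^{p-1}_{(i)}$ onto the vertex $(0)$ by a homotopy that is \emph{stationary on the attaching subcomplex} and then to invoke Lemma~\ref{predefretr}(2). But a homotopy ending at the single point $(0)$ that is stationary on $A$ forces $A=\{(0)\}$, so the condition is self-contradictory once the attaching subcomplex is larger than a vertex. What Lemma~\ref{predefretr}(2) actually requires is a deformation of the face \emph{onto} the attaching subcomplex, not onto a point. For the first $(p-1)$-face you remove, that attaching subcomplex is a genuine horn and Proposition~\ref{Axiom 4} applies; for the second and later faces it is a horn with further faces deleted, and neither Axiom~4 nor your induction hypothesis supplies the needed retraction. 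Your proposed strengthened induction (``a contraction of $\Delta^{n}$ fixing any prescribed boundary subcomplex'') inherits the same contradiction.

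The paper sidesteps this by organising the collapse by dimension rather than face-by-face. Let $F_{d}\subset\Lambda^{p}_{0}$ be the subcomplex generated by all simplices of dimension at most $d$ that contain the vertex $(0)$; then $F_{0}=\{(0)\}$, $F_{p-1}=\Lambda^{p}_{0}$, and $F_{d}$ is obtained from $F_{d-1}$ by attaching each $d$-simplex $\sigma\ni(0)$ along its $0$-horn (distinct such $\sigma$'s meet only inside $F_{d-1}$). Now \emph{every} attaching map is a horn inclusion, so Proposition~\ref{Axiom 4} together with Lemma~\ref{predefretr}(2) gives a $\dcal$-deformation retraction $F_{d}\to F_{d-1}$ directly---no induction on $p$ and no relative hypothesis. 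Proposition~\ref{Axiom 3} and Remark~\ref{simplexofdelta}(2) enter only to identify each $F_{d}$, built as a pushout in $\dcal$, with the corresponding diffeological subspace of $\Delta^{p}$.
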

	\fi%
	\section{Proofs of Theorems \ref{model} and \ref{homotopygp}}
	In this section, we establish the basic properties of the singular functor $S^{\dcal}$ and the realization functor $|\ |_{\dcal}$, and prove Theorems \ref{model} and \ref{homotopygp}. As mentioned in Section 1, the proof of Theorem \ref{model} is constructed using only properties (1)-(3) of the category $\dcal$ and Axioms 1-4 for the standard simplices.
		\def\colim{{\mathrm{colim}}}
	\subsection{Singular and realization functors}
	Recall the definitions of the singular functor $S^{\dcal} : \dcal \longrightarrow \scal$ and the realization functor $|\ |_{\dcal} : \scal \longrightarrow \dcal$ (Section 1). The following proposition has already been stated in Remark \ref{simplexofdelta}(2).
	
	\begin{prop}\label{adjointSD}
$|\ |_{\dcal}: \scal \rightleftarrows \dcal: S^{\dcal}$ is an adjoint pair.
	\end{prop}
	\begin{proof} We can easily see that the natural isomorphism $\scal(K,S^{\mathcal{D}} X)\cong \mathcal{D}(|K|_{\dcal},X)$ holds for $K\in\scal$ and $X\in\dcal$.
	\end{proof}
		Define the set $\ical$ of morphisms of $\dcal$ by
		\[
			\ical =  \{\dot{\Delta}^{p} \longhookrightarrow \Delta^{p} \ | \ p\geq 0 \}.
		\]
		We generalize Proposition \ref{Axiom 3} as follows.
		\begin{prop}\label{relcellcpx}
			If $K$ is a subcomplex of a simplicial set $L$, then the canonical smooth injection
			\[
			|K|_{\dcal} \longhookrightarrow |L|_{\dcal}
			\]
			is a $\dcal$-embedding.
		\end{prop}
		\begin{proof}
			The inclusion $K \longhookrightarrow L$ is a relative $\ical_{\scal}$-cell complex, where $\ical_{\scal} = \{\dot{\Delta}[p] \longhookrightarrow \Delta[p]\ |\ p \geq 0 \}$. Thus, $|K|_{\dcal} \longhookrightarrow |L|_{\dcal}$ is a relative $\ical$-cell complex (Propositions \ref{Axiom 3} and \ref{adjointSD}), and hence a $\dcal$-embedding (see the comment after Proposition \ref{category D}).
		\end{proof}
		\begin{rem}\label{simplexcontractible}
					The horn $\Lambda^{p}_{k}$ is deformed onto the vertex $(k)$ by iterated application of the deformation of a simplex onto a horn (see Propositions \ref{Axiom 4}, \ref{adjointSD}, and \ref{relcellcpx}, and Lemma \ref{predefretr}(2)). Hence, $\Delta^{p}$ is also contractible in $\dcal$.
		\end{rem}
		\begin{lem}\label{Kan}
			\begin{itemize}
				\item[$(1)$] For a diffeological space $X$, $S^{\dcal}X$ is a Kan complex.
				\item[$(2)$] If $f\simeq g  :X\longrightarrow Y$ in $\mathcal{D}$, then $S^{\mathcal{D}}f\simeq S^{\mathcal{D}}g  :S^{\mathcal{D}}X\longrightarrow S^{\mathcal{D}}Y$ in $\scal$. In particular, if $X \simeq Y$ in $\dcal$, then $S^{\dcal}X \simeq S^{\dcal}Y$ in $\scal$.
			\end{itemize}
		\end{lem}
	\begin{proof}
	(1) We show that, for any solid arrow diagram in $\scal$
	\begin{center}
		\begin{tikzcd}
			\Lambda_k[p]  \arrow[hook]{d}\arrow{r}& S^{\dcal}X \\
			\Delta[p]  \arrow[dotted]{ru},
		\end{tikzcd}
	\end{center}
	there exists a dotted arrow that makes the diagram commute. By Proposition \ref{adjointSD}, this extension problem is equivalent to the extension problem
	\begin{center}
		\begin{tikzcd}
			\left|\Lambda_k[p] \right|_{\mathcal{D}}\arrow[hook]{d}\arrow{r}& X \\
			\left|\Delta[p] \right|_{\mathcal{D}}  \arrow[dotted]{ru}
		\end{tikzcd}
	\end{center}
	in $\dcal$, which has a solution by Propositions \ref{relcellcpx} and Axiom 4 (Proposition \ref{Axiom 4}).\\
	$(2)$ Let $H  :X\times I\longrightarrow Y$ be a $\mathcal{D}$-homotopy connecting $f$ to $g$. Then, the composite
	$$S^{\mathcal{D}}X\times\Delta[1]\xrightarrow{1\times\iota}S^{\mathcal{D}}X\times S^{\mathcal{D}}I\cong S^{\mathcal{D}}(X\times I)\xrightarrow{S^{\mathcal{D}}H}S^{\mathcal{D}}Y$$
	is a simplicial homotopy connecting $S^{\mathcal{D}}f$ to $S^{\mathcal{D}}g$, where $\iota:\Delta[1]\longrightarrow S^{\mathcal{D}}I$ is a simplicial map corresponding to the canonical diffeomorphism $\Delta^1\longrightarrow I$.
\end{proof}
	Let $S :\czero\longrightarrow\scal$ be the topological singular functor, and $|\ | : \scal \longrightarrow\czero$ be the topological realization functor defined by $|K|=\underset{\Delta\downarrow K}{\colim}\ \Delta^n_{\text{top}}$ in $\czero$; see Section 1 for the simplex category $\Delta \downarrow K$. It is easily seen that $(|\ |, S)$ is an adjoint pair (cf. \cite[p. 7]{GJ}). Note that $\underset{\Delta\downarrow K}{\colim}\ \Delta^n_{\text{top}}$ in $\czero$ coincides with $\underset{\Delta\downarrow K}{\colim}\ \Delta^n_{\text{top}}$ in $\tcal$; in other words, $|K|$ coincides with the ordinary topological realization of $K$ (see Lemma \ref{adjointC0T}).
	\begin{lem}\label{compositeadjoint}
		The composite of the adjoint pairs
		\[
		|\ |_{\dcal}: \scal \rightleftarrows \dcal: S^{\dcal} \ \text{and} \ \widetilde{\cdot}: \dcal \rightleftarrows \czero: R
		\]
		is just the adjoint pair
		\[
		|\ |: \scal \rightleftarrows \czero: S.
		\]
		\begin{proof}
			
			For $X \in \ccal^{0}$,
			$
			\dcal(\Delta^{p}, RX) \cong \ccal^{0}(\Delta^{p}_{\mathrm{top}}, X)
			$
			(Propositions \ref{adjointDC0} and \ref{Axiom 1}), and therefore,
			$
			S^{\dcal}\circ R = S.
			$
			Since $\widetilde{|\ |_{\dcal}}$ and $|\ |$ are left adjoints to $S^{\dcal}\circ R$ and $S$, respectively, the equality $S^{\dcal}\circ R = S$ implies the equality $\widetilde{|\ |_{\dcal}} = |\ |.$
		\end{proof}
	\end{lem}
	\if0
	From Lemma \ref{compositeadjoint}, we have
	\begin{lem}\label{subcomplex}
		Let $X$ be a diffeological space. Then the singular complex $S^{\dcal}X$ is a subcomplex of the topological singular complex $S\widetilde{X}$.
		\begin{proof}
			The smooth map $id: X \longrightarrow R\widetilde{X}$ induces the inclusion $S^{\dcal}X \longhookrightarrow S^{\dcal}R\widetilde{X} = S\widetilde{X}$ (cf. Lemma \ref{compositeadjoint}).
		\end{proof}
	\end{lem}
	\fi
	\subsection{Proof of Theorem \ref{model}}
	In this subsection, we prove Theorem \ref{model}. 
	We need several lemmas for this.
	\par\indent
	Define the set $\jcal$ of morphisms of $\dcal$ by
	\begin{eqnarray*}
		\jcal & = & \{\Lambda^{p}_{k} \longhookrightarrow \Delta^{p} \ | \ p>0,\ 0 \leq k \leq p \}.
	\end{eqnarray*}
	\begin{lem}\label{F in D}
		Let $f  :X\longrightarrow Y$ be a morphism of $\mathcal{D}$.
		\begin{itemize}
			\item[$(1)$]
			The following conditions are equivalent:
			\begin{itemize}
				\item[$(\mathrm{i})$]
				$f  :X\longrightarrow Y$ is a fibration;
				\item[$(\mathrm{ii})$]
				$S^{\mathcal{D}} f : S^{\mathcal{D}} X\longrightarrow S^{\mathcal{D}} Y$ is a fibration;
				\item[$(\mathrm{iii})$]
				$f$ has the right lifting property with respect to $\jcal$.
			\end{itemize}
			\item[$(2)$]
			The following conditions are equivalent:
			\begin{itemize}
				\item[$(\mathrm{i})$]
				$f  :X\longrightarrow Y$ is both a fibration and a weak equivalence;
				\item[$(\mathrm{ii})$]
				$S^{\mathcal{D}} f : S^{\mathcal{D}} X\longrightarrow S^{\mathcal{D}} Y$ is both a fibration and a weak equivalence;
				\item[$(\mathrm{iii})$]
				$f$ has the right lifting property with respect to $\ical$.
			\end{itemize}
		\end{itemize}
		\begin{proof}$(1)$
			The equivalence of (i) and (iii) is obvious from the definition of fibration. The equivalence of (ii) and (iii) follows from Propositions \ref{adjointSD} and \ref{relcellcpx}; see the proof of Lemma \ref{Kan}(1).\\
			$(2)$
			(i)$\Leftrightarrow$(ii)
			The equivalence is obvious from part 1 and the definition of weak equivalence.\\
			(ii)$\Leftrightarrow$(iii)
			Recall that a simplicial map $\varphi  : K\longrightarrow L$ is both a fibration and a weak equivalence if and only if $\varphi  : K\longrightarrow L$ has the right lifting property with respect to the inclusions $\dot{\Delta}[n]\longhookrightarrow \Delta[n]$ (\cite[Lemma 2.4]{K}). Then, the equivalence follows from Propositions \ref{adjointSD} and \ref{relcellcpx}.
		\end{proof}
	\end{lem}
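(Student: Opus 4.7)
The plan is to translate lifting problems in $\dcal$ into lifting problems in $\scal$ via the adjunction $|\ |_\dcal \dashv S^\dcal$ of Proposition \ref{adjointSD}(1), and then appeal to the standard characterization of Kan fibrations (resp.\ trivial Kan fibrations) as maps with the right lifting property against horn (resp.\ boundary) inclusions in $\scal$. The essential bridge is Axiom 3 (Proposition \ref{Axiom 3}): for a subcomplex $K \subset \Delta[p]$, the realization $|K|_\dcal$ with its colimit diffeology is $\dcal$-isomorphic over $\Delta^p$ to the diffeological subspace $|K| \subset \Delta^p$. Applied to $K = \Lambda_k[p]$ and $K = \dot{\Delta}[p]$, this identifies $|\Lambda_k[p]|_\dcal \hookrightarrow |\Delta[p]|_\dcal$ with the element $\Lambda^p_k \hookrightarrow \Delta^p$ of $\jcal$, and identifies $|\dot{\Delta}[p]|_\dcal \hookrightarrow |\Delta[p]|_\dcal$ with the corresponding element of $\ical$.

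For part (1), the equivalence (i) $\Leftrightarrow$ (iii) is immediate from the very definition of fibration supplied by Theorem \ref{model}(2). For (ii) $\Leftrightarrow$ (iii), I would start from the fact that $S^\dcal f$ is a Kan fibration in $\scal$ iff it has the RLP against each horn inclusion $\Lambda_k[p] \hookrightarrow \Delta[p]$; transposing these lifting problems across $|\ |_\dcal \dashv S^\dcal$ gives the RLP of $f$ against $|\Lambda_k[p]|_\dcal \hookrightarrow \Delta^p$ for all $p > 0$ and $0 \leq k \leq p$, which by Axiom 3 is precisely the RLP against $\jcal$.

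Part (2) runs in parallel. The equivalence (i) $\Leftrightarrow$ (ii) combines part (1) with the definition of weak equivalence in $\dcal$ (Theorem \ref{model}(1)). For (ii) $\Leftrightarrow$ (iii), I would invoke the standard fact (e.g., \cite[Lemma 2.4]{K}) that a simplicial map is simultaneously a Kan fibration and a weak equivalence iff it has the RLP against the boundary inclusions $\dot{\Delta}[p] \hookrightarrow \Delta[p]$, and then repeat the adjunction-plus-Axiom-3 argument from part (1) with $\dot{\Delta}[p]$ in place of $\Lambda_k[p]$. Nothing here is technically difficult; the only point requiring care, and thus the closest thing to a main obstacle, is that the adjunction by itself would only yield the RLP against the realizations $|\Lambda_k[p]|_\dcal \hookrightarrow \Delta^p$ and $|\dot{\Delta}[p]|_\dcal \hookrightarrow \Delta^p$ with their colimit diffeologies on the domain, whereas the members of $\jcal$ and $\ical$ carry the subspace diffeology from $\Delta^p$. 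Reconciling these two diffeologies is exactly the content of Axiom 3, whose verification was the bulk of the prior work.
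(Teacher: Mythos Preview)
Your proposal is correct and follows essentially the same approach as the paper: the equivalences (i)$\Leftrightarrow$(iii) come from the definitions, while (ii)$\Leftrightarrow$(iii) in both parts are obtained by transposing lifting problems across the adjunction $(|\ |_\dcal, S^\dcal)$ and invoking Axiom 3 to identify the realized horn and boundary inclusions with the members of $\jcal$ and $\ical$. Your explicit articulation of why Axiom 3 is needed---to reconcile the colimit diffeology on $|\Lambda_k[p]|_\dcal$ and $|\dot{\Delta}[p]|_\dcal$ with the subspace diffeology carried by $\Lambda^p_k$ and $\dot{\Delta}^p$---is exactly the point the paper gestures at when it cites Proposition \ref{Axiom 3} and the proof of Proposition \ref{adjointSD}(2).
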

	\begin{lem}\label{defretr}
		Suppose given a diagram
		\begin{center}
			\begin{tikzcd}
				\underset{\lambda\in\Lambda}{\coprod}\Lambda^{p_\lambda}_{k_\lambda}\arrow{d}\arrow[hook]{r}& \underset{\lambda\in\Lambda}{\coprod}\Delta^{p_{\lambda}}\\
				X
			\end{tikzcd}
		\end{center}
		in $\mathcal{D}$. Then $X$ is a deformation retract of $X \underset{\underset{\lambda\in\Lambda}{\coprod}\Lambda^{p_\lambda}_{k_\lambda}}{\cup}\underset{\lambda\in\Lambda}{\coprod}\Delta^{p_{\lambda}}$ in $\dcal$.
	\end{lem}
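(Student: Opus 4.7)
The plan is to combine Axiom 4 with the pushout result Lemma \ref{predefretr}(2). By Axiom 4 (Proposition \ref{Axiom 4}), for each $\lambda \in \Lambda$ there is a $\dcal$-deformation $H_\lambda : \Delta^{p_\lambda} \times I \longrightarrow \Delta^{p_\lambda}$ of $\Delta^{p_\lambda}$ onto $\Lambda^{p_\lambda}_{k_\lambda}$. First I would show that $\underset{\lambda}{\coprod}\ \Lambda^{p_\lambda}_{k_\lambda}$ is a $\dcal$-deformation retract of $\underset{\lambda}{\coprod}\ \Delta^{p_\lambda}$. This requires assembling the $H_\lambda$ into a single smooth map out of $\bigl(\underset{\lambda}{\coprod}\ \Delta^{p_\lambda}\bigr)\times I$. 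Since $\dcal$ is cartesian closed (Proposition \ref{category D}(2)), the functor $(-)\times I$ preserves colimits, hence $\bigl(\underset{\lambda}{\coprod}\ \Delta^{p_\lambda}\bigr)\times I \cong \underset{\lambda}{\coprod}\ (\Delta^{p_\lambda}\times I)$, and the universal property of the coproduct delivers the map $\sum_\lambda H_\lambda$ directly.

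Next I would invoke Lemma \ref{predefretr}(2) applied to the pushout square
\[
\begin{tikzcd}
\underset{\lambda}{\coprod}\ \Lambda^{p_\lambda}_{k_\lambda} \arrow[hook]{r} \arrow{d} & \underset{\lambda}{\coprod}\ \Delta^{p_\lambda} \arrow{d} \\
X \arrow[hook]{r} & X \underset{\underset{\lambda}{\coprod}\Lambda^{p_\lambda}_{k_\lambda}}{\cup} \underset{\lambda}{\coprod}\ \Delta^{p_\lambda}.
\end{tikzcd}
\]
To apply that lemma one needs the top arrow to be the inclusion of a diffeological subspace. This follows because each $\Lambda^{p_\lambda}_{k_\lambda} \longhookrightarrow \Delta^{p_\lambda}$ is a $\dcal$-embedding (a subcomplex inclusion, by Axiom 3, Proposition \ref{Axiom 3}) and the class of $\dcal$-embeddings is closed under coproducts (the remark following Proposition \ref{category D}). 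Then Lemma \ref{predefretr}(2) immediately yields that $X$ is a $\dcal$-deformation retract of the pushout.

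The only genuine content, beyond citing the above, is the coproduct step, and even that is essentially formal once cartesian closedness is invoked; there is no real obstacle. For completeness I would verify that the assembled map $\sum_\lambda H_\lambda$ satisfies the constancy conditions on $\underset{\lambda}{\coprod}\ \Lambda^{p_\lambda}_{k_\lambda}\times I$ and at time $0$, which is immediate componentwise.
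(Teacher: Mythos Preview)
Your proposal is correct and follows the same approach as the paper, which proves the lemma in one line by citing Proposition \ref{Axiom 4} and Lemma \ref{predefretr}(2). You have simply spelled out the details (the coproduct step via cartesian closedness, and the fact that the top arrow is a $\dcal$-embedding) that the paper leaves implicit.
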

	\begin{proof}
		The result follows immediately from Proposition \ref{Axiom 4} and Lemma \ref{predefretr}(2).	
	\end{proof}
	
	For a subset $A$ of $\Delta^p$, $A_{\text{top}}$ is the set $A$ endowed with the induced topology of $\Delta^p_{\text{top}}$.
	
	\begin{lem}\label{underlyinghornbdry}
		The underlying topological spaces of $\Lambda^p_k$ and $\dot{\Delta}^p$ are $\Lambda^p_{k\ \mathrm{top}}$ and $\dot{\Delta}^p_{\mathrm{top}}$ respectively.
		\begin{proof}
			The result follows immediately from Proposition \ref{relcellcpx} and Lemma \ref{compositeadjoint}.
		\end{proof}
	\end{lem}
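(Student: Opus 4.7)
The plan is to deduce this directly from Axiom 3 (Proposition \ref{Axiom 3}) together with Lemma \ref{compositeadjoint}; after those two inputs, the only remaining content is the standard identification of the topological realization of a subcomplex of $\Delta[p]$ with the corresponding subset of $\Delta^{p}_{\mathrm{top}}$.

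First I would observe that for $K$ equal to either $\dot{\Delta}[p]$ or $\Lambda_k[p]$, the underlying set of the diffeological realization $|K|_{\dcal}$ sitting inside $\Delta^p$ coincides set-theoretically with $\dot{\Delta}^p$ or $\Lambda^p_k$, respectively. Axiom 3 then states that this smooth injection is a $\dcal$-embedding, so $|\dot{\Delta}[p]|_{\dcal}$ and $|\Lambda_k[p]|_{\dcal}$ agree with $\dot{\Delta}^p$ and $\Lambda^p_k$ as diffeological subspaces of $\Delta^p$.

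Next I would apply the underlying topological space functor $\widetilde{\cdot}$ and invoke Lemma \ref{compositeadjoint}, which asserts $\widetilde{|K|_{\dcal}} = |K|$ (the ordinary topological realization) for every simplicial set $K$. Combined with the previous step this yields $\widetilde{\dot{\Delta}^p} = |\dot{\Delta}[p]|$ and $\widetilde{\Lambda^p_k} = |\Lambda_k[p]|$ as topological spaces on the sets $\dot{\Delta}^p$ and $\Lambda^p_k$.

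The last step, which is the main point to check, is the identification of $|\dot{\Delta}[p]|$ with $\dot{\Delta}^p_{\mathrm{top}}$ and of $|\Lambda_k[p]|$ with $\Lambda^p_{k\ \mathrm{top}}$. For any subcomplex $L$ of $\Delta[p]$, the universal property of the colimit $|L| = \underset{\Delta\downarrow L}{\colim}\, \Delta^{n}_{\mathrm{top}}$ together with the continuous inclusions $\Delta^{n}_{\mathrm{top}} \hookrightarrow \Delta^{p}_{\mathrm{top}}$ produces a canonical continuous bijection from $|L|$ onto the subset $|L| \subset \Delta^{p}_{\mathrm{top}}$ endowed with the subspace topology; since $|L|$ is a finite CW-complex (hence compact) and $\Delta^{p}_{\mathrm{top}}$ is Hausdorff, this bijection is a homeomorphism. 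Specializing to $L = \dot{\Delta}[p]$ and $L = \Lambda_k[p]$ finishes the proof. This final identification is standard material on realizations of finite simplicial complexes and is the only place where one must go slightly beyond a direct citation of the earlier results in the paper.
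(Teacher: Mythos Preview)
Your proof is correct and follows exactly the same approach as the paper's one-line proof: Axiom 3 identifies $\dot{\Delta}^p$ and $\Lambda^p_k$ with $|\dot{\Delta}[p]|_{\dcal}$ and $|\Lambda_k[p]|_{\dcal}$, and Lemma \ref{compositeadjoint} then identifies their underlying topological spaces with the ordinary topological realizations. The additional compact--Hausdorff argument you give to identify $|\dot{\Delta}[p]|$ and $|\Lambda_k[p]|$ with the subspace topologies inside $\Delta^p_{\mathrm{top}}$ is the standard fact the paper leaves implicit.
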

	
	\begin{lem}\label{sequence}
		Let
		$$
		X_0\overset{i_1}{\longhookrightarrow}X_1\overset{i_2}{\longhookrightarrow}X_2\overset{i_3}{\longhookrightarrow}\cdots
		$$
		be a sequence in $\dcal$ satisfying one of the following conditions:
		\begin{itemize}
			\item[$\mathrm{(i)}$] Each $i_n$ fits into a pushout diagram of the form
			\begin{center}
				\begin{tikzcd}
					\underset{\lambda\in\Lambda_n}{\coprod}\dot{\Delta}^{p_\lambda}\arrow{d}\arrow[hook]{r}&\underset{\lambda\in\Lambda_n}{\coprod}\Delta^{p_\lambda}\arrow{d}\\
					X_{n-1}\arrow[hook]{r}{i_n} & X_n.
				\end{tikzcd}
			\end{center}
			\item[$\mathrm{(ii)}$] Each $i_n$ fits into a pushout diagram of the form
			\begin{center}
				\begin{tikzcd}
					\underset{\lambda\in\Lambda_n}{\coprod}\Lambda^{p_{\lambda}}_{k_{\lambda}}\arrow{d}\arrow[hook]{r}&\underset{\lambda\in\Lambda_n}{\coprod}\Delta^{p_\lambda}\arrow{d}\\
					X_{n-1}\arrow[hook]{r}{i_n} & X_n.
				\end{tikzcd}
			\end{center}
		\end{itemize}
		$Set$ $X = \underset{\rightarrow}{\lim} \ X_{n}$ and let $A$ be a diffeological space with $\widetilde{A}$ compact. Then, any smooth map $f: A \longrightarrow X$ factors through $X_{n}$ for some $n$.
	\end{lem}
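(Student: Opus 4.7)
The plan is to pass to underlying topological spaces, run the standard ``a compact set meets only finitely many cells'' argument in $\ccal^{0}$, and then lift the factorization back to $\dcal$.

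First I would show that every $i_{n}$, and consequently every composite $X_{n}\hookrightarrow X$, is a $\dcal$-embedding. By Axiom 3 (Proposition \ref{Axiom 3}) the inclusions $\dot{\Delta}^{p}\hookrightarrow\Delta^{p}$ and $\Lambda^{p}_{k}\hookrightarrow\Delta^{p}$ are $\dcal$-embeddings, and Proposition \ref{category D}(1) together with the comment immediately following it ensures that $\dcal$-embeddings are preserved under coproducts, pushouts, and transfinite composites.

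Next I would apply $\widetilde{\cdot}:\dcal\longrightarrow\ccal^{0}$. Lemma \ref{underlyingtop} says that $\widetilde{\cdot}$ and $I:\ccal^{0}\longrightarrow\tcal$ both preserve final structures, so the pushout diagrams defining each $X_{n}$ map to pushout diagrams in $\ccal^{0}$ that coincide, topologically, with the analogous pushouts in $\tcal$. Because the topological inclusions $\dot{\Delta}^{p}_{\mathrm{top}}\hookrightarrow\Delta^{p}_{\mathrm{top}}$ and $\Lambda^{p}_{k\ \mathrm{top}}\hookrightarrow\Delta^{p}_{\mathrm{top}}$ are closed embeddings (Lemma \ref{underlyinghornbdry}), it follows that every $\widetilde{X}_{n-1}\hookrightarrow\widetilde{X}_{n}$ is a closed embedding and that $\widetilde{X}$ carries the final topology for the tower $\{\widetilde{X}_{n}\}$.

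The heart of the argument is then to show that $\widetilde{f}(\widetilde{A})\subseteq\widetilde{X}_{n}$ for some $n$. Suppose not; then one extracts points $x_{k}=\widetilde{f}(a_{k})$ with $x_{k}\in\widetilde{X}_{n_{k}}\setminus\widetilde{X}_{n_{k}-1}$ along a strictly increasing sequence $n_{1}<n_{2}<\cdots$. By the pushout structure each $x_{k}$ has a unique preimage $y_{k}$ in the stage-$n_{k}$ coproduct of cells, and $y_{k}$ lies in the open interior of that cell (i.e.\ in $\mathring{\Delta}^{p_{\lambda_{k}}}$ in case (i) or in $\Delta^{p_{\lambda_{k}}}\setminus\Lambda^{p_{\lambda_{k}}}_{k_{\lambda_{k}}}$ in case (ii)). For any subset $S\subseteq\{x_{k}\}$ I would show by induction on $n$ that $S\cap\widetilde{X}_{n}$ is closed in $\widetilde{X}_{n}$: the preimage of $S$ in the stage-$n$ coproduct $\coprod\Delta^{p_{\lambda}}_{\mathrm{top}}$ is a finite subset of a Hausdorff space, hence closed, and $S\cap\widetilde{X}_{n-1}$ is closed in $\widetilde{X}_{n-1}$ by the inductive hypothesis, with the base case $S\cap\widetilde{X}_{0}=\emptyset$ trivial. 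Thus $S$ is closed in $\widetilde{X}$, so $\{x_{k}\}$ is an infinite closed discrete subspace of the compact set $\widetilde{f}(\widetilde{A})$, a contradiction. Once the topological factorization is in hand, $f$ factors through $X_{n}$ set-theoretically, and since $X_{n}\hookrightarrow X$ is a $\dcal$-embedding, this set-theoretic factorization is automatically smooth.

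The main obstacle is the third step, namely verifying that the sampled points $\{x_{k}\}$ form a closed discrete subspace \emph{without} imposing any separation axiom on $\widetilde{X}_{0}$. The key observation that unblocks it is that every $x_{k}$ lies outside $\widetilde{X}_{0}$, so the inductive closedness argument never needs to handle $\widetilde{X}_{0}$ itself and can rely entirely on the Hausdorffness of the simplicial cells $\Delta^{p}_{\mathrm{top}}$ attached at positive stages.
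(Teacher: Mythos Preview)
Your argument is correct and follows essentially the same route as the paper: pass to underlying topological spaces via the left adjoint $\widetilde{\cdot}$, run the standard compactness argument to get a set-theoretic factorization, and then upgrade to a smooth factorization using that each $X_{n}\hookrightarrow X$ is a $\dcal$-embedding. The only organizational differences are that the paper first reduces case~(ii) to case~(i) by factoring $\Lambda^{p}_{k}\hookrightarrow\Delta^{p}$ as the composite of a pushout of $\dot{\Delta}^{p-1}\hookrightarrow\Delta^{p-1}$ with $\dot{\Delta}^{p}\hookrightarrow\Delta^{p}$, and then simply cites \cite[Lemma~8.7]{DS} for the topological step, whereas you treat both cases simultaneously and spell out the closed-discrete-subset argument in full (including the care needed because $\widetilde{X}_{0}$ need not be Hausdorff).
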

	
	\begin{proof}
		Note that $\Lambda^{p}_{k} \longhookrightarrow \Delta^{p}$ is the composite of $\Lambda^{p}_{k} \longhookrightarrow \dot{\Delta}^{p}$ and $\dot{\Delta}^{p} \longhookrightarrow \Delta^{p}$, and that $\Lambda^{p}_{k} \longhookrightarrow \dot{\Delta}^{p}$ is the pushout of $\dot{\Delta}^{p-1} \longhookrightarrow \Delta^{p-1}$ (see Propositions \ref{adjointSD} and \ref{relcellcpx}, and the argument in the proof of Proposition 7.1). Then, we may assume that the sequence $\{X_{n} \}$ satisfies condition (i).
		\par\indent
		Define the set $\widetilde{\ical}$ of morphisms of $\czero$ by
		\[
		\widetilde{\ical} = \{\dot{\Delta}^{p}_{\mathrm{top}} \longhookrightarrow \Delta^{p}_{\mathrm{top}} \ | \ p \geq 0 \}.
		\]
		Since the canonical map $X_{0} \longhookrightarrow X$ is a sequential relative $\ical$-cell complex (see \cite[Definitions 15.1.1 and 15.1.2]{MP}), the map $\widetilde{X_{0}} \longhookrightarrow \widetilde{X}$ is a sequential relative $\widetilde{\ical}$-cell complex by Propositions \ref{adjointDC0} and \ref{Axiom 1}, and Lemma \ref{underlyinghornbdry}. Therefore, we can easily see that $f: A \longrightarrow X$ factors through some $X_{n}$ in $Set$ (cf. \cite[Lemma 8.7]{DS}), and hence in $\dcal$ (see the comment after Proposition \ref{category D}).
	\end{proof}
	\if0		
	The canonical map $X_{0} \longhookrightarrow X$ is a sequential relative $\kcal$-cell complex for $\kcal = \ical$ or $\jcal$ (see \cite[Definitions 15.1.1 and 15.1.2]{MP}). For a set $\kcal$ of morphisms of $\dcal$, $\widetilde{\kcal}$ denotes the set of morphisms of $\czero$ defined by 
	\[
	\widetilde{\kcal} = \{\widetilde{g}: \widetilde{U} \longrightarrow \widetilde{V} \ | \ g: U \longrightarrow V \ \text{is in} \ \kcal \}.
	\]	
	By Propositions \ref{adjointDC0}, the map $\widetilde{X_{0}} \longhookrightarrow \widetilde{X}$ is a sequential relative $\widetilde{\kcal}$-cell complex for ${\kcal} = \ical$ or $\jcal$. Since we have
	\begin{eqnarray*}
		\widetilde{\ical} & = & \{\dot{\Delta}^{p}_{\mathrm{top}} \longhookrightarrow \Delta^{p}_{\mathrm{top}} \ | \ p \geq 0 \}, \\
		\widetilde{\jcal} & = & \{\Lambda^{p}_{k \ \mathrm{top}} \longhookrightarrow \Delta^{p}_{\mathrm{top}} \ | \ p>0,\  0\leq k\leq p \}
	\end{eqnarray*}
	(Proposition \ref{Axiom 1} and Lemma \ref{underlyinghornbdry}), we see that the map $\widetilde{X}_{0} \longhookrightarrow \widetilde{X}$ is a sequential relative $\widetilde{\ical}$-cell complex. Therefore, we can easily see that $f:A \longrightarrow X$ factors through some $X_{n}$ in $Set$ (cf. \cite[Lemma 8.7]{DS}), and hence in $\dcal$ (Proposition \ref{category D}(1) and \cite[Proposition 10.2.7]{Hi}).
\end{proof}

\begin{lem}\label{sequence}
	Let
	$$
	X_0\overset{i_1}{\longhookrightarrow}X_1\overset{i_2}{\longhookrightarrow}X_2\overset{i_3}{\longhookrightarrow}\cdots
	$$
	be a sequence in $\dcal$, and set $X = \underset{\longrightarrow}{\lim}\ X_n$. Let $A$ be a diffeological space with $\widetilde{A}$ compact.
	
	\item[$(1)$] Suppose that each $i_n$ fits into a pushout diagram of the form
	\begin{center}
		\begin{tikzcd}
			\underset{\lambda\in\Lambda_n}{\coprod}\dot{\Delta}^{p_\lambda}\arrow{d}\arrow[hook]{r}&\underset{\lambda\in\Lambda_n}{\coprod}\Delta^{p_\lambda}\arrow{d}\\
			X_{n-1}\arrow[hook]{r}{i_n} & X_n.
		\end{tikzcd}
	\end{center}
	Then, any smooth map $f:A \longrightarrow X$ factors through $X_{n}$ for some $n$.
	\item[$(2)$] Suppose that each $i_n$ fits into a pushout diagram of the form
	\begin{center}
		\begin{tikzcd}
			\underset{\lambda\in\Lambda_n}{\coprod}\Lambda^{p_{\lambda}}_{k_{\lambda}}\arrow{d}\arrow[hook]{r}&\underset{\lambda\in\Lambda_n}{\coprod}\Delta^{p_\lambda}\arrow{d}\\
			X_{n-1}\arrow[hook]{r}{i_n} & X_n.
		\end{tikzcd}
	\end{center}
	Then, any smooth map $f:A \longrightarrow X$ factors through $X_{n}$ for some $n$.
\end{lem}

\begin{proof}
	(1) Define the sets $\widetilde{\ical}$ and $\widetilde{\jcal}$ of morphisms of $\czero$ by
	\begin{eqnarray*}
		\widetilde{\ical}& = & \{\dot{\Delta}^{p}_{\mathrm{top}} \longhookrightarrow \Delta^{p}_{\mathrm{top}} \ | \ p \geq 0 \},  \\
		\widetilde{\jcal}& = & \{\Lambda^{p}_{k \ \mathrm{top}} \longhookrightarrow \Delta^{p}_{\mathrm{top}} \ | \ p>0, \ 0\leq k \leq p  \}.
	\end{eqnarray*}
	By Propositions \ref{adjointDC0} and \ref{Axiom 1}, and Lemma \ref{underlyinghornbdry}, the inclusion $\widetilde{X_0}\longhookrightarrow\widetilde{X}$ is a relative $\widetilde{\ical}$-cell complex (see \cite[Definition 15.1.1]{MP}). Therefore, $f: A \longrightarrow X$ factors through some $X_{n}$ in $Set$ (\cite[Lemma 8.7]{DS} or \cite[Proposition 10.7.4]{Hi}), and hence in $\dcal$ (Proposition \ref{category D} and \cite[Proposition 10.2.7]{Hi})
	
	a generalized relative $CW$ inclusion (\cite[p. 108]{DS}). Therefore, the result follows from the standard argument (see \cite[Lemma 8.7]{DS}).\\
	
	(2) Note that the pushouts
	$$
	\Biggl(
	X_{n-1}\underset{\underset{\lambda\in\Lambda_n}{\coprod}\dot{\Delta}^{p_{\lambda}-1}_{(k_{\lambda})}}{\cup} \underset{\lambda\in\Lambda_n}{\coprod} \Delta^{p_{\lambda}-1}_{(k_{\lambda})}\Biggl)\underset{\underset{\lambda\in\Lambda_n}{\coprod}\dot{\Delta}^{p_{\lambda}}_{}}{\cup} \underset{\lambda\in\Lambda_n}{\coprod} \Delta^{p_{\lambda}} \ \ \text{and} \ \
	X_{n-1}\underset{\underset{\lambda\in\Lambda_n}{\coprod}\Lambda^{p_{\lambda}}_{k_{\lambda}}}{\cup} \underset{\lambda\in\Lambda_n}{\coprod} \Delta^{p_{\lambda}}
	$$
	are canonically isomorphic, and set $Y_{n} = X_{n-1} \underset{\underset{\lambda\in\Lambda_n}{\coprod}\dot{\Delta}^{p_{\lambda}-1}_{(k_{\lambda})}}{\cup}\underset{\lambda\in\Lambda_n}{\coprod}\Delta^{p_{\lambda}-1}_{(k_{\lambda})}$. Since
	$i_{n}  :X_{n-1} \longhookrightarrow X_{n}$ is the composite of the inclusions $i'_{n} : X_{n-1} \longhookrightarrow Y_{n}$ and $i''_{n}  : Y_{n} \longhookrightarrow X_{n}$, which fit into pushout diagrams of the form as in Part 1,
	\if0
	Thus, $i_n: X_{n-1} \longhookrightarrow X_n$ is factored as the composite
	$$
	X_{n-1} \xhookrightarrow[]{i'_n} Y_n \xhookrightarrow[]{i''_n} X_n,
	$$
	and $i'_n$ and $i''_{n}$ fit into the pushout diagrams
	$$
	\begin{tikzpicture}
	\draw[cm right hook-cm to]  (-4.7,0.9) -- (-3.5,0.9);
	\draw[->] (-5.2,0.5) -- (-5.2,-0.6);
	\draw[cm right hook-cm to]  (-4.7,-1)  -- (-3.3,-1);
	\draw[->] ( -2.8,0.5) -- (-2.8, -0.6);
	\node at(-3.9,-0.8) {$i'_{n}$};
	
	\draw[cm right hook-cm to]  (3.5,0.9) -- (4.7,0.9);
	\draw[->] (5.2,0.5) -- (5.2,-0.6);
	\draw[cm right hook-cm to]  (3.3,-1) -- (4.7,-1);
	\draw[->] ( 2.8,0.5) -- (2.8, -0.6);
	\node at(3.9,-0.8) {$i''_{n}$};

	\node at(-5.5,0.9) {$\underset{\lambda\in\Lambda_n}{\coprod}\dot{\Delta}^{p_{\lambda}-1}_{(k_{\lambda})}$};
	\node at(-2.7,0.9) {$\underset{\lambda\in\Lambda_n}{\coprod}\Delta^{p_{\lambda}-1}_{(k_{\lambda})}$};
	\node at(-5.2,-1) {$X_{n-1}$};
	\node at(-2.8,-1) {$Y_n$};
	
	\node at(5.5,0.9) {$\underset{\lambda\in\Lambda_n}{\coprod}{\Delta}^{p_\lambda}$};
	\node at(2.7,0.9) {$\underset{\lambda\in\Lambda_n}{\coprod}\dot{\Delta}^{p_{\lambda}}$};
	\node at(5.2,-1) {$X$};
	\node at(2.8,-1) {$Y_n$};
	\node at(0,0) {and};
	\end{tikzpicture}
	$$
	respectively, where 
	$$
	Y_n = 
	X_{n-1}\underset{\underset{\lambda\in\Lambda_n}{\coprod}\dot{\Delta}^{p_{\lambda}-1}_{(k_{\lambda})}}{\cup} \underset{\lambda\in\Lambda_n}{\coprod} \Delta^{p_{\lambda}-1}_{(k_{\lambda})}.
	$$
	Hence, 
	\fi
	the result follows from Part 1.
\end{proof}
\fi

\begin{proof}[Proof of Theorem \ref{model}]
	Let us verify the following model axioms:
	\begin{itemize}
		\item[M1]
		$\mathcal{D}$ is closed under all small limits and colimits.
		\item[M2]
		If $f$ and $g$ are maps in $\mathcal{D}$ such that $gf$ is defined and if two of the three maps $f$, $g$, $gf$ are weak equivalences, then the third also is.
		
		\item[M3]
		If $f$ is a retract of $g$ and $g$ is a weak equivalence, fibration, or cofibration, then $f$ also is.
		\item[M4]
		Given a commutative solid arrow diagram
		$$
		\begin{tikzcd}
		A \arrow{d}{i} \arrow{r} & X \arrow{d}{p}\\
		B \arrow[dashed]{ur} \arrow{r}& Y
		\end{tikzcd}
		$$
		the dotted arrow exists, making the diagram commute, if either
		\begin{itemize}
			\item[(i)] $i$ is a cofibration and $p$ is a trivial fibration (i.e., a fibration that is also a weak equivalence), or
			\item[(ii)] $i$ is a trivial cofibration (i.e., a cofibration that is also a weak equivalence) and $p$ is a fibration.
		\end{itemize}

		\item[M5]
		Every map $f$ has two functorial factorizations:
		\begin{itemize}
			\item[(i)]
			$f$ = $pi$, where $i$ is a cofibration and $p$ is a trivial fibration, and
			\item[(ii)]
			$f$ = $qj$, where $j$ is a trivial cofibration and $q$ is a fibration. 
		\end{itemize}
	\end{itemize}
	
	\item[M1]
	is satisfied by Proposition \ref{category D}(1). M2 is obvious. M3 is not difficult (cf. \cite[2.7 and 8.10]{DS}).
	\item[M5]
	(ii) Applying the infinite gluing construction (IGC) for ${\jcal}=\{\Lambda^p_k\longhookrightarrow\Delta^p\ (p>0,0\leq k\leq p)\}$ to $f  :X\longrightarrow Y$, we obtain the factorization
	\begin{center}
		\begin{tikzcd}
			X\arrow[swap]{rd}{f}\arrow{r}{j_\infty}&G^{\infty}({\jcal},f)\arrow{d}{q_\infty}\\
			& Y
		\end{tikzcd}
	\end{center}
	(see \cite[pp. 104-105]{DS}). By Lemmas \ref{underlyinghornbdry} and \ref{sequence}, $q_{\infty}$ is a fibration. Since $j_\infty$ is obviously a cofibration by construction, we need to only see that $j_\infty$ is a weak equivalence.
	\par\indent
	By Lemma \ref{defretr}, the inclusion $X\longhookrightarrow G^n({\jcal},f)$ is a $\dcal$-homotopy equivalence, and hence, a weak equivalence (Lemma \ref{Kan}(2)). Thus, $j_\infty :X\longhookrightarrow G^\infty({\jcal},f)$ is also a weak equivalence by Lemma \ref{sequence}.
	\par\noindent (i)
	Applying the infinite gluing construction (IGC) for $\ical=\{\dot{\Delta}^p\longhookrightarrow{\Delta}^p\ (p\geq 0)\}$ to $f  :X\longrightarrow Y$, we obtain the factorization
	\begin{center}
		\begin{tikzcd}
			X\arrow[swap]{rd}{f}\arrow{r}{i_\infty}&G^{\infty}({\ical},f)\arrow{d}{p_\infty}\\
			& Y
		\end{tikzcd}
	\end{center}
	(see \cite[pp. 104-105]{DS}). By Lemmas \ref{underlyinghornbdry} and \ref{sequence}, $p_{\infty}$ is a trivial fibration (see Lemma \ref{F in D}(2)). By construction, $i_{\infty}$ is obviously a cofibration.\item[M4]
	can also be shown by an argument similar to that in the case of the category of topological spaces (\cite[p. 110]{DS}).\\
	\par
	By Lemma \ref{F in D}, $\dcal$ is a cofibrantly generated model category (\cite[Definition 15.2.1]{MP}); the sets $\ical$ and $\jcal$ are the generating cofibrations and generating trivial cofibrations. By Lemmas \ref{underlyinghornbdry} and \ref{sequence}, the model structure on $\dcal$ is compactly generated.		
	By Lemmas \ref{Kan}(1) and \ref{F in D}(1), every diffeological space is fibrant.  
\end{proof}	
\if0
Last, let us show that $\dcal$ is cellular. Conditions (1) and (2) in \cite[Definition 12.1.1]{Hi} are easily checked using Propositons \ref{adjointDC0} and \ref{Axiom 1} (cf. the proof of Lemma \ref{sequence}). Thus, we have only to show that the cofibrations are effective monomorphisms.
\par\indent
Let $i: A \longrightarrow B$ be a cofibration. By M5(i), we have the commutative solid arrow diagram
$$
\begin{tikzcd}
A \arrow[swap]{d}{i} \arrow{r}{j} & B' \arrow{d}{q}\\
B \arrow[dashed]{ur} \arrow{r}{1} \arrow[swap]{r}{=}& B,
\end{tikzcd}
$$
where $j$ is a cofibration and $q$ is an acyclic fibration. Then, the dotted arrow exists, making the diagram commute (M4(i)), which shows that $i$ is a retract of $j$ (see the argument in \cite[p. 110]{DS}). Thus, by \cite[Proposition 10.9.6]{Hi}, we may assume that $i: A \longrightarrow B$ is the compositon of a sequence
$$
A = B_{0} \xhookrightarrow{i_{1}} B_{1} \xhookrightarrow{i_{2}} B_{2}  \xhookrightarrow{i_{3}} \cdots
$$
as in Lemma \ref{sequence}(1) (see \cite[Definition 10.2.1]{Hi}). Since $i$ is injective, the diagram
\makeatletter
\newcommand*{\relrelbarsep}{.386ex}
\newcommand*{\relrelbar}{%
	\mathrel{%
		\mathpalette\@relrelbar\relrelbarsep
	}%
}
\newcommand*{\@relrelbar}[2]{%
	\raise#2\hbox to 0pt{$\m@th#1\relbar$\hss}%
	\lower#2\hbox{$\m@th#1\relbar$}%
}
\providecommand*{\rightrightarrowsfill@}{%
	\arrowfill@\relrelbar\relrelbar\rightrightarrows
}
\providecommand*{\leftleftarrowsfill@}{%
	\arrowfill@\leftleftarrows\relrelbar\relrelbar
}
\providecommand*{\xrightrightarrows}[2][]{%
	\ext@arrow 0359\rightrightarrowsfill@{#1}{#2}%
}
\providecommand*{\xleftleftarrows}[2][]{%
	\ext@arrow 3095\leftleftarrowsfill@{#1}{#2}%
}
\makeatother
$$
A \xrightarrow{i} B \rightrightarrows B \underset{\cup}{A} B
$$
is an equalizer in $Set$ by the proof of Proposition \ref{category D}(1) and \cite[Example 10.9.3]{Hi}. Thus, we have only to show that $i$ is initial. Note that the inclusion $\dot{\Delta}^{p} \longhookrightarrow \Delta^{p}$ is initial by definition, and recall the construction of colimits in $\dcal$ (see the proof of Proposition 2.1(1)). Then, we see that $i_{n}: B_{n-1} \longhookrightarrow B_{n}$, and hence, $i: A \longhookrightarrow B$ is initial.
\end{proof}	
\if0
\begin{rem}\label{Chen}
	As mentioned in Section 1, our approach applies to the category $\ccal h$ of Chen spaces as well. 
	More precisely, since the categories $\ccal h$ and $\dcal$ share the convenient properties (1)-(3) in Section 1.1, we can define the standard $p$-simplices $\Delta^{p}_{\ccal h}$, verify Axioms 1-4 for $\Delta^{p}_{\ccal h}$, and establish a model structure on $\ccal h$ in the same manner as in the case of $\dcal$.
	\par\indent
	Recall that we have an adjoint pair
	\[
	\mathfrak{S}_{\mathfrak{o}} : \ccal h \rightleftarrows \dcal : \mathfrak{Ch}^{\sharp}
	\]
	(cf. \cite[p. 105]{S1}). We can easily see that $(\mathfrak{S_{o}}, \mathfrak{Ch}^{\sharp})$ is a Quillen pair (\cite[p. 153]{H}). That $(\mathfrak{S_{o}}, \mathfrak{Ch}^{\sharp})$ is a pair of Quillen equivalences and that $(|\ |_{\dcal}, S^{\dcal})$ is a pair of Quillen equivalences are shown in the succeeding paper.
\end{rem}
\fi
\if0
\begin{rem}\label{Axioms}
	The existence of the standard simplices $\Delta^p$ \ $(p\geq 0)$ satisfying Axioms 1-4 is a key to endowing a category of ``smooth objects'' with a model structure and establishing a Quillen equivalence with the category $\mathcal{S}$. In the forthcoming paper, it is shown that the standard simplices in $\mathcal{D}$ define standard simplices satisfying Axioms 1-4 in two important full subcategories of $\mathcal{D}$, and hence, model structures on them; one of the full subcategories is that of Fr\"{o}licher spaces (\cite[Section 23]{KM}).
\end{rem}
\fi
\fi
\begin{rem}\label{9.7}
	We can also see that $\dcal$ is a cellular model category; conditions (1) and (2) in \cite[Definition 12.1.1]{Hi} are easily checked using Propositions \ref{adjointDC0} and \ref{Axiom 1}, and Lemma \ref{underlyinghornbdry} (cf. the proof of Lemma \ref{sequence}), and condition (3) is checked using \cite[Proposition 10.9.6 and Example 10.9.3]{Hi} and Proposition \ref{category D}(1). 
\end{rem}

\subsection{Proof of Theorem \ref{homotopygp}}
In this subsection, we prove Theorem \ref{homotopygp}, using results of \cite{CW}, and then characterize weak equivalences of diffeological spaces in terms of smooth homotopy groups.
\par\indent
For the proof of Theorem \ref{homotopygp}, we need a few lemmas.

\begin{lem}\label{bdydeform}
	For a fixed real number 
	$\epsilon \in (0, \frac{1}{p+1}), \partial_{\epsilon}\Delta^{p}$
	is the diffeological subspace of $\Delta^p$ defined by
	$$
	\partial_\epsilon \Delta^{p} = \{(x_0, \ldots, x_p) \in \Delta^p \ |\ x_i\leq \epsilon \ \text{for some} \ i \}.
	$$
	Then, there exists a $\dcal$-homotopy 
	$T  : \Delta^p \times I \longrightarrow \Delta^p$ such that 
	$T(\cdot, 0) = 1_{\Delta^p}$ and $T\ |_{\partial_{\epsilon}\Delta^{p} \times I}$ is a $\dcal$-deformation of $\partial_{\epsilon}\Delta^p$ onto $\dot{\Delta}^p$.
	\begin{proof}
		There exists a $\dcal$-deformation 
		$D: (\Delta^p - \{b_p\})\times I \longrightarrow \Delta^p-\{b_p \}$
		of $\Delta^p - \{b_p\}$ onto $\dot{\Delta}^p$ (Proposition \ref{deformontbdry}); from the construction, we can choose a $\dcal$-deformation $D$ such that $D$ restricts to a $\dcal$-deformation of $\partial_{\epsilon}\Delta^{p}$ onto $\dot{\Delta}^{p}$. Choose a smooth function
		$\rho  : \Delta^p \longrightarrow [0, 1]$ such that $\rho \equiv 0$ near $b_p$ and $\rho \equiv 1$ near $\partial_{\epsilon}\Delta^p$ (see Lemma \ref{firstproperty}). Then, define the map
		$T  : \Delta^p \times I \longrightarrow \Delta^{p}$
		by
		\[
		T(x, t)=
		\begin{cases}
		D(x, \rho(x)t) & \text{for} \ \ x \neq b_p \\
		b_p & \text{for} \ \ x = b_p.
		\end{cases}
		\]
		It is easily seen that $T$ is the desired $\dcal$-homotopy.
	\end{proof}
\end{lem}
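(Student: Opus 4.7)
The plan is to damp the deformation $D$ from Proposition \ref{deformontbdry} near the barycenter $b_p$ by a smooth cutoff, so that the resulting homotopy extends smoothly across $b_p$ by the identity while agreeing with $D$ near $\partial_\epsilon \Delta^p$. The key point that makes this feasible is that $\epsilon < \frac{1}{p+1}$ forces $b_p$ to lie in $\mathring{\Delta}^p \setminus \partial_\epsilon \Delta^p$, so there is room to separate the damping region from the region where we need $D$ to act as a genuine deformation.

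First I would invoke Proposition \ref{deformontbdry} to obtain a $\dcal$-deformation $D \colon (\Delta^p \setminus \{b_p\}) \times I \longrightarrow \Delta^p \setminus \{b_p\}$ of $\Delta^p \setminus \{b_p\}$ onto $\dot{\Delta}^p$. I would then argue, by inspecting the explicit construction of $R_{A^p}$ in the proof of Lemma \ref{key}, that by choosing the closed disk $B^p$ small enough around $b_p$ (inside $\mathring{\Delta}^p \setminus \partial_\epsilon \Delta^p$) and the good neighborhoods $U_I(\epsilon')$ used in the stages $R_{q,r}$ to have sufficiently small $\epsilon'$, every intermediate stage leaves $\partial_\epsilon \Delta^p$ invariant. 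Concretely, each $R_{q,r}$ only moves points in a good neighborhood of an open simplex towards that simplex, which can only weakly decrease some coordinate; so the condition ``$x_i \le \epsilon$ for some $i$'' is preserved. This gives a $D$ whose restriction to $\partial_\epsilon \Delta^p \times I$ is a $\dcal$-deformation of $\partial_\epsilon \Delta^p$ onto $\dot{\Delta}^p$.

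Next I would choose a smooth function $\rho \colon \Delta^p \longrightarrow [0,1]$ with $\rho \equiv 0$ on a neighborhood $V$ of $b_p$ contained in the interior of $B^p$ and $\rho \equiv 1$ on a neighborhood of $\partial_\epsilon \Delta^p$. Such a $\rho$ exists: pull back a standard Euclidean bump on $\mathbb{A}^p$ to $\Delta^p$, which is smooth by Lemma \ref{firstproperty}. Then define
\[
T(x,t) \;=\;
\begin{cases}
D(x, \rho(x)\,t) & \text{if } x \ne b_p,\\[2pt]
b_p & \text{if } x = b_p.
\end{cases}
\]
On the open set $(\Delta^p \setminus \{b_p\}) \times I$, $T$ is smooth as a composite of smooth maps; on the open set $V \times I$, $\rho \equiv 0$ forces $T = D(\cdot, 0) = \mathrm{id}$, which is smooth. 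These two open sets cover $\Delta^p \times I$, so by the locality axiom (Definition \ref{diffeosp}(1)(ii)) $T$ is smooth on the whole $\Delta^p \times I$. By construction $T(\cdot, 0) = \mathrm{id}$, and on the neighborhood of $\partial_\epsilon \Delta^p$ where $\rho \equiv 1$ we have $T = D$, so $T$ restricts to a $\dcal$-deformation of $\partial_\epsilon \Delta^p$ onto $\dot{\Delta}^p$.

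The main obstacle is the invariance claim used in the first step, namely that $D$ can be arranged so that $D(\partial_\epsilon \Delta^p \times I) \subseteq \partial_\epsilon \Delta^p$. This is not automatic from Proposition \ref{deformontbdry} and requires going back into the layer-by-layer construction of $R_{A^p}$ in Lemma \ref{key} and checking that each building block $R_{q,r}$, when its good neighborhood is chosen thin enough relative to $\epsilon$, cannot push a point whose minimum coordinate is $\le \epsilon$ out of $\partial_\epsilon \Delta^p$. Everything else in the argument is formal manipulation of the cutoff.
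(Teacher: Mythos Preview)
Your proposal is correct and follows essentially the same approach as the paper: invoke the deformation $D$ from Proposition~\ref{deformontbdry}, note (by inspection of the construction of $R_{A^p}$) that it can be chosen to preserve $\partial_\epsilon\Delta^p$, damp it with a cutoff $\rho$ vanishing near $b_p$ and equal to $1$ near $\partial_\epsilon\Delta^p$, and define $T$ by the identical piecewise formula. You supply more detail than the paper on the smoothness check and on the invariance step (which the paper dispatches with ``from the construction''), but the strategy and the construction are the same.
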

A {\sl diffeological pair} $(X, A)$ consists of a diffeological space $X$ and a diffeological subspace $A$ of $X$. We can define the {\sl $\dcal$-homotopy set} 
$[(X, A),(Y, B)]_{\dcal}$
between diffeological pairs $(X, A)$ and $(Y, B)$ using the unit interval $I$ similar to the case of topological pairs (cf. Section 2.4), which is the same as for the case defined using the line $\rbb$ (\cite[p. 1277]{CW}); see also Remark \ref{Dhomotopy}.

\begin{lem}\label{smoothhomotopygp}
	Let $(X, x)$ be a pointed diffeological space. For a fixed real number 
	$\epsilon \in (0, \frac{1}{p+1})$,
	there exists a natural bijection
	$$
	\pi^{\dcal}_{p}(X, x) \cong [(\Delta^{p}, \partial_{\epsilon}\Delta^{p}), (X, x)]_{\dcal}.	
	$$
	\begin{proof}
		Let $\partial_{\epsilon}\Delta^{p}_{\mathrm{\text{\text{sub}}}}$ denote the subset $\partial_{\epsilon}\Delta^p$ endowed with the sub-diffeology of $\Delta^{p}_{\mathrm{\text{\text{sub}}}}$. By \cite[Theorem 3.2]{CW}, $\pi^{\dcal}_{p}(X, x)$ is naturally isomorphic to $[(\Delta^{p}_{\mathrm{\text{\text{sub}}}}, \partial_{\epsilon}\Delta^{p}_{\mathrm{\text{\text{sub}}}}), (X, x)]_{\dcal}$, which is obviously isomorphic to $[(\Delta^p, \partial_{\epsilon}\Delta^p), (X, x)]_{\dcal}$ (see Lemmas \ref{firstproperty} and \ref{secondproperty}).
	\end{proof}
\end{lem}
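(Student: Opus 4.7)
The plan is to reduce the statement to \cite[Theorem 3.2]{CW}, which provides the analogous bijection
$$\pi^{\dcal}_{p}(X, x) \cong [(\Delta^{p}_{\mathrm{sub}}, \partial_{\epsilon}\Delta^{p}_{\mathrm{sub}}), (X, x)]_{\dcal},$$
and then to show that the identity on the underlying set induces a natural bijection between $[(\Delta^{p}_{\mathrm{sub}}, \partial_{\epsilon}\Delta^{p}_{\mathrm{sub}}), (X, x)]_{\dcal}$ and $[(\Delta^{p}, \partial_{\epsilon}\Delta^{p}), (X, x)]_{\dcal}$. Since $\partial_{\epsilon}\Delta^{p}$ and $\partial_{\epsilon}\Delta^{p}_{\mathrm{sub}}$ have the same underlying set, what must be checked is that the set of pair-preserving smooth maps is the same, and that the same holds for $\dcal$-homotopies (which are themselves pair-preserving smooth maps out of $\Delta^{p}\times I$, respectively $\Delta^{p}_{\mathrm{sub}}\times I$).

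The forward direction is immediate: by Lemma \ref{firstproperty} the identity $id:\Delta^{p}\longrightarrow\Delta^{p}_{\mathrm{sub}}$ is smooth, so precomposition with $id$ (respectively with $id\times 1_{I}$) converts a pair-preserving smooth map out of $\Delta^{p}_{\mathrm{sub}}$ into a pair-preserving smooth map out of $\Delta^{p}$ and likewise for homotopies. The substantive direction is the converse: given $f:\Delta^{p}\longrightarrow X$ smooth with $f(\partial_{\epsilon}\Delta^{p})=\{x\}$, I must show the same set-theoretic map is smooth as a map $\Delta^{p}_{\mathrm{sub}}\longrightarrow X$. For this I use the open cover
$$\Delta^{p} \;=\; \{y\in\Delta^{p}\mid\min_{i}y_{i}<\epsilon\} \;\cup\; (\Delta^{p}-\mathrm{sk}_{p-2}\Delta^{p}),$$
where both sets are open in the Euclidean topology. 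Every point of $\mathrm{sk}_{p-2}\Delta^{p}$ has at least two vanishing coordinates, so $\mathrm{sk}_{p-2}\Delta^{p}$ is contained in the first member, and the two members manifestly cover $\Delta^{p}$. Given any plot $q:U\longrightarrow\Delta^{p}_{\mathrm{sub}}$, continuity of $q$ pulls this cover back to an open cover of $U$, and on each piece $f\circ q$ is smooth: on the preimage of the first piece $f\circ q$ is locally the constant $x$, and on the preimage of the second piece the diffeologies of $\Delta^{p}$ and $\Delta^{p}_{\mathrm{sub}}$ agree by Lemma \ref{secondproperty}, so smoothness of $f$ on $\Delta^{p}-\mathrm{sk}_{p-2}\Delta^{p}$ with respect to the original diffeology gives smoothness with respect to the sub-diffeology as well.

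The same cover argument, applied to $\Delta^{p}\times I$ using the open cover
$$\Delta^{p}\times I \;=\; \{y\in\Delta^{p}\mid\min_{i}y_{i}<\epsilon\}\times I \;\cup\; (\Delta^{p}-\mathrm{sk}_{p-2}\Delta^{p})\times I$$
and the fact that $\Delta^{p}\times I$ and $\Delta^{p}_{\mathrm{sub}}\times I$ have matching sub-diffeologies on $(\Delta^{p}-\mathrm{sk}_{p-2}\Delta^{p})\times I$, transfers pair-preserving smooth homotopies between the two frameworks. Composing the resulting bijection on $\dcal$-homotopy sets with the isomorphism of \cite[Theorem 3.2]{CW} yields the claimed natural bijection.

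The main obstacle is the converse direction above, and concretely it is the verification that the locally constant behaviour of $f$ on the neighbourhood $\{\min_{i}y_{i}<\epsilon\}$ of $\mathrm{sk}_{p-2}\Delta^{p}$ exactly compensates for the discrepancy between the two diffeologies, which by Lemma \ref{secondproperty} is supported on $\mathrm{sk}_{p-2}\Delta^{p}$. Once this matching between the locus of the constant value and the locus where the diffeologies differ is observed, the verification becomes the routine open-cover argument sketched above.
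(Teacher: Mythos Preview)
Your proposal is correct and follows essentially the same route as the paper: both invoke \cite[Theorem 3.2]{CW} to identify $\pi^{\dcal}_{p}(X,x)$ with $[(\Delta^{p}_{\mathrm{sub}},\partial_{\epsilon}\Delta^{p}_{\mathrm{sub}}),(X,x)]_{\dcal}$, and both then pass to $[(\Delta^{p},\partial_{\epsilon}\Delta^{p}),(X,x)]_{\dcal}$ using Lemmas \ref{firstproperty} and \ref{secondproperty}. The paper simply declares the second identification ``obvious'' from those two lemmas, whereas you spell out the open-cover argument showing that a map constant on $\partial_{\epsilon}\Delta^{p}$ is automatically smooth for the sub-diffeology because the discrepancy between the two diffeologies is confined to $\mathrm{sk}_{p-2}\Delta^{p}\subset\{\min_{i}y_{i}<\epsilon\}$; this is exactly the content the paper leaves implicit.
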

\begin{proof}[Proof of Theorem \ref{homotopygp}.]
	By Lemma \ref{smoothhomotopygp}, we identify $\pi^{\dcal}_{p}(X, x)$ with
	$[(\Delta^{p}, \partial_{\epsilon}\Delta^{p}), (X, x)]_{\dcal}$
	for a fixed $\epsilon \in (0, \frac{1}{p+1})$. 
	Since an affine map (i.e., a map preserving convex combinations) from $\Delta^{r}$ to $\Delta^{p} \times I$ is smooth by Axiom 2 (Proposition \ref{Axiom 2}), we have the natural map
	$$
	\varTheta_{X}:\pi^{\dcal}_{p}(X, x) \longrightarrow \pi_{p}(S^{\dcal}X, x).
	$$
	$\varTheta_{X}$ is surjective by Lemma \ref{bdydeform}.
	\par\indent
	Let us see that $\varTheta_{X}$ is injective. Suppose that 
	$\varTheta_{X}([f]) = \varTheta_{X}([g])$.
	Then, there is a smooth function 
	$F :\Delta^{p+1} \longrightarrow X$
	such that 
	\[
	F \circ d^i=
	\begin{cases}
	f & \text{if} \ i = p, \\
	g & \text{if} \ i = p+1, \\
	0 & \text{if} \ i \neq p, p+1,	
	\end{cases}
	\]
	where $0$ denotes the constant map to the base point. (See Section 1.2 for the definition of $d^i :\Delta^p \longrightarrow \Delta^{p+1}$.) By applying Lemma \ref{bdydeform} to $\Delta^{p+1}$, we can assume that $F = 0$ near $\mathrm{sk}_{p-1}\ \Delta^{p+1}$. Thus, $F$ defines a smooth map $\Delta^{p+1}_{\text{\text{sub}}}$ to $X$ (Lemma \ref{secondproperty}). Hence, we obtain the $\dcal$-homotopy relative to $\partial_{\epsilon'}\Delta^p$
	$$
	\Delta^p \times I \xrightarrow{id \times 1} \Delta^{p}_{\mathrm{\text{\text{sub}}}} \times I \xrightarrow{\ \ \beta\ \ } \Delta^{p+1}_{\text{\text{sub}}} \xrightarrow{\ \ F\ \ } X
	$$
	connecting $f$ to $g$, where $\epsilon'$ is a sufficiently small positive number and $\beta$ is a smooth map defined by 
	$$
	\beta(x_{0}, \ldots, \ x_p, \ t) = (x_0, \ldots, \ x_{p-1}, \ tx_p, \ (1-t)x_p).
	$$
	\par\indent
	Last, let us show that $\varTheta_{X}$ is a group homomorphism for $p > 0$. Define the map
	$$ 
	\gamma : \Delta^{p+1} \longrightarrow \Delta^{p}_{(p-1)} \underset{\langle 0, \ldots, p-2, p \rangle}{\cup}\Delta^{p}_{(p+1)}
	$$
	by
	\[
	\gamma(x_0, \ldots, x_{p+1})=
	\begin{cases}
	(x_0, \ldots, x_{p-2}, 0, x_p + 2x_{p-1}, x_{p+1}-x_{p-1}) & \text{if} \ x_{p+1} \geq x_{p-1} \\
	(x_0, \ldots, x_{p-2}, x_{p-1} - x_{p+1}, x_p + 2x_{p+1}, 0) & \text{if} \ x_{p+1} \leq x_{p-1}.  
	\end{cases}
	\]
	For smooth maps
	$f,g: (\Delta^{p}, \partial_{\epsilon}\Delta^{p}) \longrightarrow (X, x)$, the composite 
	$$
	\Delta^{p+1} \xrightarrow{\ \ \gamma\ \ } \Delta^{p}_{(p-1)} \underset{\langle 0, \ldots, p-2, p \rangle}{\cup}\Delta^{p}_{(p+1)} \xrightarrow{f+g} X
	$$
	is smooth, though $\gamma$ is not smooth. Thus, the product $\varTheta_{X}([f])\cdot\varTheta_{X}([g])$ 
	is defined to be the composite 
	$$
	\Delta^p \xrightarrow{\ \ d^p\ \ } \Delta^{p+1} \xrightarrow{(f+g)\circ \gamma} X.
	$$ 
	This coincides with $\varTheta_{X}([f]\cdot[g])$ (see the argument in the proof of \cite[Theorem 4.11]{CW}).
\end{proof}
\begin{cor}\label{characterization}
	Let 
	$f  : X \longrightarrow Y$ be a smooth map between diffeological spaces. Then, $f$ is a weak equivalence if and only if 
	$$
	\pi^{\dcal}_{p}(f):\pi^{\dcal}_{p}(X, x) \longrightarrow \pi^{\dcal}_{p}(Y, f(x))
	$$
	is bijective for any $p \geq 0$ and any $x \in X$.
\end{cor}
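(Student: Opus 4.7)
The plan is to deduce the corollary by combining the definition of a weak equivalence with Theorem \ref{homotopygp} and the characterization of weak equivalences between Kan complexes in $\scal$.

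First, I would recall that by Definition (in Theorem \ref{model}(1)), a smooth map $f: X \longrightarrow Y$ is a weak equivalence if and only if $S^{\dcal} f: S^{\dcal} X \longrightarrow S^{\dcal} Y$ is a weak equivalence in $\scal$. By Proposition \ref{adjointSD}(2), both $S^{\dcal} X$ and $S^{\dcal} Y$ are Kan complexes, so a standard fact in simplicial homotopy theory (e.g. \cite{GJ}) states that a map between Kan complexes is a weak equivalence if and only if it induces a bijection on $\pi_0$ and an isomorphism on $\pi_p$ at every basepoint for $p > 0$.

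Next, I would invoke Theorem \ref{homotopygp} to identify $\pi^{\dcal}_p(X, x)$ with $\pi_p(S^{\dcal} X, x)$ naturally in $(X, x)$, for every $p \geq 0$. Naturality means that the square
\[
\begin{tikzcd}
\pi^{\dcal}_p(X, x) \arrow{r}{\varTheta_X} \arrow[swap]{d}{\pi^{\dcal}_p(f)} & \pi_p(S^{\dcal} X, x) \arrow{d}{\pi_p(S^{\dcal} f)} \\
\pi^{\dcal}_p(Y, f(x)) \arrow{r}{\varTheta_Y} & \pi_p(S^{\dcal} Y, f(x))
\end{tikzcd}
\]
commutes, and the horizontal arrows are bijections (isomorphisms of groups when $p > 0$). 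Hence $\pi^{\dcal}_p(f)$ is bijective for every $p \geq 0$ and every $x \in X$ if and only if $\pi_p(S^{\dcal} f)$ is bijective for every $p \geq 0$ and every basepoint in $S^{\dcal} X$, which by the Kan complex criterion is equivalent to $S^{\dcal} f$ being a weak equivalence in $\scal$, and hence to $f$ being a weak equivalence in $\dcal$.

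The only subtle point to check is the $p = 0$ case: one must verify that the vertex set of $S^{\dcal} X$ equals the underlying set of $X$ (which is immediate, since $0$-simplices of $S^{\dcal} X$ are smooth maps $\Delta^0 \longrightarrow X$), so that ranging over basepoints $x \in X$ is the same as ranging over vertices of $S^{\dcal} X$, and that the bijection $\varTheta_X$ at $p = 0$ identifies smooth path components with simplicial path components. This is routine given Theorem \ref{homotopygp}. No real obstacle remains; the corollary is essentially a formal repackaging of Theorem \ref{homotopygp} plus the Kan complex criterion for weak equivalences.
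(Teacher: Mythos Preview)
Your proposal is correct and follows essentially the same approach as the paper: the paper's proof simply says the result follows immediately from the definition of a weak equivalence in $\dcal$ and Theorem \ref{homotopygp}, and your argument is precisely a spelled-out version of that, invoking Proposition \ref{adjointSD}(2) and the Kan-complex criterion for weak equivalences in $\scal$ to make the ``immediate'' step explicit.
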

\begin{proof}
	The result follows immediately from the definition of a weak equivalence in $\dcal$ and Theorem \ref{homotopygp}.
\end{proof}
\appendix
\section{Diffeology of $\Delta^{p}_{\mathrm{sub}}$}
In this appendix, we see that $\Delta^{p}_{\mathrm{sub}}$ satisfies neither Axiom 3 nor 4 for $p > 1$, which is why we have newly introduced the standard $p$-simplices $\Delta^{p}$.
\par\indent
The following lemma shows that the set $\{\Delta^{p}_{\mathrm{sub}} \}_{p \geq 0}$ satisfies Axioms 1 and 2.
\begin{lem}\label{affirmative}
	$(1)$ The underlying topological space of $\Delta^{p}_{\mathrm{sub}}$ is the topological standard $p$-simplex for $p \geq 0$.\\
	$(2)$ Any affine map $f:\Delta^{p}_{\mathrm{sub}} \longrightarrow \Delta^{q}_{\mathrm{sub}}$ is smooth.
	\begin{proof}
		(1) The result follows from Lemma \ref{underlyingconvex}.\\
		(2) Obvious.
	\end{proof}	
\end{lem}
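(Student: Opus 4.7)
The plan for part (1) is to invoke Lemma~\ref{underlyingconvex} directly. The set $\Delta^p$ is, by definition, a convex subset of the affine $p$-space $\mathbb{A}^p \subset \rbb^{p+1}$, and the sub-diffeology of $\Delta^p_{\mathrm{sub}}$ is precisely the one obtained by viewing it as a diffeological subspace of $\mathbb{A}^p$ (equivalently, of $\rbb^p$ after a linear identification). Hence Lemma~\ref{underlyingconvex} identifies the $D$-topology of $\Delta^p_{\mathrm{sub}}$ with the subspace topology it inherits from $\rbb^{p+1}$, and this subspace topology is by definition the topological standard $p$-simplex $\Delta^p_{\mathrm{top}}$.

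The plan for part (2) is to reduce the problem to the classical fact that any affine map between finite-dimensional affine spaces is smooth in the ordinary sense, and then use the universal property of the sub-diffeology. Explicitly, any affine map $f : \Delta^p \longrightarrow \Delta^q$ extends uniquely to an affine map $\bar{f} : \mathbb{A}^p \longrightarrow \mathbb{A}^q$ between the ambient affine spaces; identifying $\mathbb{A}^p$ and $\mathbb{A}^q$ with Euclidean spaces, $\bar{f}$ is a $C^{\infty}$-map. Given a plot $\alpha : U \longrightarrow \Delta^p_{\mathrm{sub}}$, the composite $U \xrightarrow{\alpha} \Delta^p_{\mathrm{sub}} \longhookrightarrow \mathbb{A}^p$ is smooth by definition of the sub-diffeology, so post-composing with $\bar{f}$ yields a smooth parametrization of $\mathbb{A}^q$ whose image lies in $\Delta^q$. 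This is exactly the condition for $f \circ \alpha$ to belong to the sub-diffeology of $\Delta^q_{\mathrm{sub}}$, proving smoothness of $f$.

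No step presents a genuine obstacle: both assertions are essentially formal consequences of the general facts already recorded for sub-diffeologies of convex subsets of Euclidean space, which is precisely why the author labels the second part \emph{obvious}. The only mild point worth checking carefully is that the sub-diffeology of $\Delta^p$ inside $\mathbb{A}^p$ is the same as that inside $\rbb^{p+1}$ under the chosen identification, but this is immediate from the smooth-compatibility axiom applied to the inclusion $\mathbb{A}^p \longhookrightarrow \rbb^{p+1}$ and its smooth linear left inverses.
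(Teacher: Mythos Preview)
Your proposal is correct and follows exactly the paper's approach: part (1) is a direct application of Lemma~\ref{underlyingconvex} to the convex set $\Delta^p \subset \mathbb{A}^p$, and part (2) is the routine unwinding of the sub-diffeology that the paper simply labels ``Obvious.'' Your additional remarks about the extension $\bar{f}$ and the compatibility of sub-diffeologies under $\mathbb{A}^p \hookrightarrow \rbb^{p+1}$ are correct elaborations of what the paper leaves implicit.
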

Using Lemma \ref{affirmative}(2), we can define the functor
$$
|\ |'_{\dcal} : \scal \longrightarrow \dcal
$$
by $|K|'_{\dcal} = \underset{\Delta\downarrow K}{\mathrm{colim}}\ \Delta^{n}_{\mathrm{sub}}$ (cf. Section 1.2). The following proposition shows that $\Delta^{p}_{\mathrm{sub}}$ satisfies neither Axiom 3 nor 4 for $p > 1$. Recall the definition of the horn $\Lambda^{p}_{k}$, and note that $\Lambda^{p}_{k\ \mathrm{sub}}$ is a diffeological subspace of $\Delta^{p}_{\mathrm{sub}}$  (Section 1.2).
\begin{prop}\label{counterex}
	$(1)$ The canonical smooth injection
	$$
	|\dot{\Delta}[p]|'_{\dcal} \longhookrightarrow \Delta^{p}_{\mathrm{sub}}
	$$
	is not a $\dcal$-embedding for $p > 1$.\\
	$(2)$	$\Lambda^{p}_{k\ \mathrm{sub}}$ is not a retract of $\Delta^p_{\mathrm{sub}}$ in $\dcal$ for $p>1$.
	\begin{proof}
		(1) Identify $|\dot{\Delta}[p]|'_{\dcal}$ with the boundary $\dot{\Delta}^{p}$ set-theoretically via the canonical injection. We show that $id :|\dot{\Delta}[p]|'_{\dcal} \longrightarrow \dot{\Delta}^{p}_{\mathrm{sub}}$ is not a diffeomorphism.
		We can easily construct a smooth injection
		$$
		c: (-1, 1) \longrightarrow \dot{\Delta}^{p}_{\mathrm{sub}}
		$$
		such that
		\begin{itemize}
			\item[$\bullet$] $c((-1, 0))$ is contained in the open simplex $(0, \ldots, p-1, \hat{p})$.
			\item[$\bullet$] $c((0, 1))$ is contained in the open simplex $(0, \ldots, \widehat{p-1}, p)$.
		\end{itemize}
		(cf. \cite[Excercise 15 2)]{I}.) However, the injection
		$
		c :(-1, 1) \longrightarrow |\dot{\Delta}[p]|'_{\dcal}
		$
		is not smooth since $c :(-\epsilon, \epsilon) \longrightarrow |\dot{\Delta}[p]|^{'}_{\dcal}$ does not factor through 
		$
		|\sigma|^{'}_{\dcal} : \Delta^{n}_{\mathrm{sub}} \longrightarrow |\dot{\Delta}[p]|^{'}_{\dcal}
		$
		for any $\epsilon > 0$ and any $\sigma \in \Delta\downarrow \dot{\Delta}[p]$ (see the proof of Proposition \ref{category D}(1) and \cite[p. 90]{CSW} for the diffeology of $|K|'_{\dcal} = \underset{\Delta\downarrow K}{\text{colim}} \ \Delta^{n}_{\mathrm{sub}}$).\\
		(2)
		Suppose that there is a retraction $r  : \Delta^p_{\text{sub}}\longrightarrow \Lambda^p_{k\ \text{sub}}$ in $\mathcal{D}$. Consider the solid arrow diagram
		\begin{equation*}
		\begin{tikzcd}
		\Delta_{\text{sub}}^p \arrow{r}{r} \arrow[hook]{d} & 
		\Lambda^p_{k\ \text{sub}}  \arrow[hook]{d}\\
		\mathbb{A}^p  \arrow[dotted]{r}{\overline{r}}
		& \mathbb{A}^p
		\end{tikzcd}
		\end{equation*}
		of diffeological spaces. By Remark \ref{Frolicher} below, there is a smooth map $\overline{r}$ defined on an open neighborhood of the vertex $(k)$ in $\mathbb{A}^p$, which makes the above diagram commutative near $(k)$. Since the Jacobi matrix of $\overline{r}$ at $(k)$ is the identity matrix, $\overline{r}$ is a local diffeomorphism by the inverse function theorem, which is a contradiction.
	\end{proof}
\end{prop}
\begin{rem}\label{Frolicher}
	Set $K = \{(x_{1}, \ldots, x_{n}) \in \rbb^{n}\ |\ x_{1} \geq 0, \ldots, x_{n} \geq 0 \}$. Proposition 24.10 in \cite{KM} shows that any smooth map $f$ from the Fr\"{o}licher subspace $K$ of $\rbb^{n}$ to $\rbb^{m}$ extends to a smooth map from $\rbb^{n}$ to $\rbb^{m}$ (see \cite[Section 23]{KM} for Fr\"{o}licher spaces). Thus, in order to use this result in the diffeological context (the proof of Proposition A2(2)), we need the following observation which follows from Boman's theorem (\cite[Corollary 3.14]{KM}). Let $f$ be a set-theoretic map from a subset $A$ of $\rbb^{n}$ to $\rbb^{m}$. Then, the following conditions are equivalent:
	\begin{description}
	\item[\ \, \, $\mathrm{(i)}$] $f$ is a smooth map from the Fr\"{o}licher subspace $A$ to $\rbb^{m}$.
	\item[\ \ \ $\mathrm{(ii)}$] $f$ is a smooth map from the diffeological subspace $A$ to $\rbb^{m}$.
	\end{description}
\end{rem}
\if0
\section{Left Adjoint Functors Preserving Finite Products}
We prepare some categorical results which are often used in this paper. Mainly we discuss the condition that a left adjoint functor preserves finite products and its implications.

\begin{prop}
	Let $\mathcal{C}$ and $\mathcal{D}$ be cartesian closed categories and $L:\mathcal{C}\longrightarrow\mathcal{D}$ and $R:\mathcal{D}\longrightarrow\mathcal{C}$ be functors. Suppose that $(L,R)$ is an adjoint pair.
	\begin{itemize}
		\item[(1)]
		$L$ preserves finite products if and only if the adjunction $(L,R)$ is enriched over $\mathcal{C}$ in the form
		$$
		\mathcal{C}(A,RX)\cong R\mathcal{D}(LA,X).
		$$
		\item[(2)]
		Suppose that $L$ preserves finite products. Then the following are equivalent:
		\begin{itemize}
			\item[(i)]
			$R\mathcal{D}(X,Y)\cong \mathcal{C}(RX,RY)$.
			\item[(ii)]
			$R$ is fully faithful.
			\item[(iii)]
			$LR=Id$.
		\end{itemize}
	\end{itemize}
	\begin{proof}
		\begin{itemize}
			\item[(1)]
			Note that $\mathcal{C}(A,RX)\cong R\mathcal{D}(LA,X)$ in $\mathcal{C}$ if and only if $\mathcal{C}(A,\mathcal{C}(A,RX))\cong \mathcal{C}(A',R\mathcal{D}(LA,X))$ for $A'\in\mathcal{C}$.
			
			Since the natural isomorphisms
			\begin{eqnarray*}
				\mathcal{C}(A' ,\mathcal{C}(A,RX)) \cong \mathcal{C}(A' \times A, RX) \cong \mathcal{D}(L(A' \times A),X)\\
				\mathcal{C}(A' ,R\mathcal{D}(LA,X)) \cong \mathcal{D}(LA' , \mathcal{D}(LA,X)) \cong \mathcal{D}(LA' \times LA,X)
			\end{eqnarray*}
			hold, the result follows.
			\item[(2)]
			(ii)$\Leftrightarrow$(iii)
			$R:\mathcal{D}\longrightarrow \mathcal{C}$ is fully faithful.\\
			$\Leftrightarrow$ $R:\mathcal{D}(X,Y)\longrightarrow\mathcal{C}(RX,RY)\cong\mathcal{D}(LRX,Y)$ is bijective.\\
			$\Leftrightarrow$
			$LRX\cong X$.\\
			(iii)$\Rightarrow$(i)
			By part 1, the natural isomorphism
			$$
			\mathcal{C}(A,RY)\cong R\mathcal{D}(LA,Y).
			$$
			By setting $A=RX$, we have
			$$
			\mathcal{C}(RX,RY)\cong R\mathcal{D}(X,Y).
			$$
			(i)$\Rightarrow$(iii)
			Since $L$ preserves terminal objects, we have 
			\[
			\begin{array}{lcl}
			\mathcal{C}(e,R\mathcal{D}(X,Y)) & \cong & \mathcal{D}(Le,\mathcal{D}(X,Y))\\
			& \cong &　\mathcal{D}(e,\mathcal{D}(X,Y)) \\
			& \cong & \hbox{the Hom-set } \mathcal{D}(X,Y)
			\end{array}
			\]
			and
			$$
			\mathcal{C}(e,\mathcal{C}(RX,RY))\cong \hbox{the Hom-set } \mathcal{C}(RX,RY),
			$$
			where $e$ denote a terminated object. Thus the natural isomorphism $\mathcal{C}(RX,RY)\cong R\mathcal{D}(X,Y)$ implies that $R$ is fully faithfull.
		\end{itemize}
	\end{proof}
\end{prop}
\fi

\begin{rem}\label{skeleton}
	By the argument in the proof of Proposition \ref{counterex}(2), we also see that $(\Lambda^{p}_{k}-\dot{\Delta}^{p-1}_{(k)})_{\mathrm{sub}}$ is not a retract of $\Delta^{p}_{\hat{k}\ \mathrm{sub}}$ for $p > 1$ (see Definition \ref{subsets} for the definitions of $\dot{\Delta}^{p-1}_{(k)}$ and $\Delta^{p}_{\hat{k}}$). On the other hand, $\Lambda^{p}_{k} - \dot{\Delta}^{p-1}_{(k)}$ is a retract of $\Delta^{p}_{\hat{k}}$ in $\dcal$ (Lemma \ref{key}). Noticing that $proj: A \times B \longrightarrow B$ is a $\dcal$-quotient map for $A, B \in \dcal$ with $A$ nonempty, we see that $\mathrm{sk}_{p-2} \ \Delta^p$ is just the set of $x \in \Delta^p$ such that the restriction of $id  : \Delta^{p} \longrightarrow \Delta^{p}_{\mathrm{sub}}$ to any open neighborhood of $x$ is not a diffeomorphism (see Lemmas \ref{firstproperty} and \ref{secondproperty}, Proposition \ref{goodnbd}, and Remark \ref{gn}(2)).
\end{rem}

\begin{rem}\label{fibrantapprox}
	As in \cite{H}, we can define the singular complex $S^{\dcal}_{\mathrm{sub}}\,X$ of a diffeological space $X$ using $\{\Delta^{p}_{\mathrm{sub}}\}_{p \geq 0}$ instead of $\{\Delta^{p} \}_{p \geq 0}$ (see Section 1.3 and Lemma \ref{affirmative}(2)). By Lemma \ref{firstproperty}, we have the natural inclusion $j_{X}:S^{\dcal}_{\mathrm{sub}}\,X \longhookrightarrow S^{\dcal}X$. In the succeeding paper, it is shown that $j_{X}$ is a fibrant approximation of $S^{\dcal}_{\mathrm{sub}}\,X$; see \cite{Quillenequiv} for the fact that $j_{X}$ induces isomorphisms on the homology.
\end{rem}




\end{document}